\title{Structure of non-solvable cyclic metric Lie algebras}
\author{Huihui An, Ju Tan \and Zaili Yan$^{*}$}
\address[Huihui An]{ School of Mathematics, Liaoning Normal University, Dalian, Liaoning Province, 116029, People's Republic of China}
\email[]{anhh@lnnu.edu.cn }
\address[Ju Tan]{School of Microelectronics and Data Science, Anhui Provincial Joint Key Laboratory of Disciplines for Industrial Big Data Analysis and Intelligent Decision, Anhui University of Technology, Maanshan, 243032, People's Republic of China}
\email[]{tanju2007@163.com}
\address[Zaili Yan]{School of Mathematics and Statistics, Ningbo University, Ningbo, Zhejiang Province, 315211,  People's Republic of China}
\email[]{yanzaili@nbu.edu.cn}
\thanks{H. An is supported by special fund for basic scientific research expenses of universities in Liaoning Province (LJ212410165012). J. Tan  is  supported  by Excellent Young Teacher Cultivation Project of Anhui Province (no.YQYB2024018), Provincial quality project (Graduate Education) course ideological and political demonstration course (no. 2024szsfkc057).
 $^{*}$Z. Yan is the corresponding author and is supported by Zhejiang Provincial Natural Science Foundation of China under Grant No. LMS25A010010.}
\date{}
\newtheorem{thm}{Theorem}[section]
\newtheorem{prop}[thm]{Proposition}
\newtheorem{lem}[thm]{Lemma}
\newtheorem{cor}[thm]{Corollary}
\theoremstyle{definition}
\newtheorem{defn}[thm]{Definition}
\newtheorem{example}[thm]{Example}
\newtheorem{rem}[thm]{Remark}
\begin{document}

\begin{abstract}
This paper presents a systematic study of the structure of non-solvable cyclic metric Lie algebras.
 A cyclic metric is a symmetric bilinear form satisfying a cyclic cocycle condition, which arises naturally in the contexts of  non-associative algebras  and homogeneous pseudo-Riemannian manifolds. Firstly,
we drive some sufficient conditions for a cyclic metric Lie algebra to be an orthogonal direct product of its semisimple and solvable parts. Then we introduce the notion of cyclic quadruples to analyze the interaction between semisimple and radical components. Finally, we use the double extension method to provide a complete characterization of non-degenerate cyclic metric Lie algebras that are neither semisimple nor solvable, over the fields of complex and real numbers.

\medskip
\textbf{Mathematics Subject Classification 2020}:   17B05, 53C30,  17B20.

\medskip
\textbf{Key words}: Cyclic metric Lie algebras; homogeneous structures;   double extensions.

\end{abstract}
\maketitle
\section{Introduction and main results}
A cyclic metric $\mathbf{B}$ on a Lie algebra $\mathfrak{g}$ over a field $\mathbb{F}$ is a symmetric bilinear form $\mathbf{B}: \mathfrak{g}\times \mathfrak{g}\rightarrow \mathbb{F}$ satisfying the identity
 \begin{equation*}
 \mathbf{B}([x,y],z)+\mathbf{B}([y,z],x)+\mathbf{B}([z,x],y)=0, \quad \forall x,y,z\in\mathfrak{g}.
 \end{equation*}
Such forms are also referred to as "commutative 2-cocycles" \cite{Dzhum10}  and arise naturally in several contexts: the study of non-associative algebras satisfying certain skew-symmetric identities \cite{Dzhum09}, the description of the second cohomology of current Lie algebras \cite{zusman94},  the structure theory of two-sided Alia algebras \cite{DB09} and anti-pre-Lie algebras \cite{LB22,LB24}.
For instance, it is shown in \cite[Theorem 2.19]{LB22} that
if $\mathfrak{g}$  admits a non-degenerate cyclic metric,  then there exists a compatible anti-pre-Lie algebra structure on $\mathfrak{g}$.
A complete classification of cyclic metrics on classical Lie algebras was given by Dzhumadil'daev and Bakirova in \cite{DB09},  who proved the following fundamental result.
\begin{thm}[\cite{DB09}]\label{1-thm-main1}
Except for the simple Lie algebra $\mathfrak{sl}(2,\mathbb{C})$,
no complex simple Lie algebra admits a non-zero cyclic metric.
\end{thm}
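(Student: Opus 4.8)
The plan is to recast the cyclic identity as the vanishing of a single alternating $3$-form and then show that, for a complex simple Lie algebra of rank $\geq 2$, this forces $\mathbf{B}=0$, with $\mathfrak{sl}(2,\mathbb{C})$ as the sole exception. For symmetric $\mathbf{B}$ put $C_{\mathbf{B}}(x,y,z)=\mathbf{B}([x,y],z)+\mathbf{B}([y,z],x)+\mathbf{B}([z,x],y)$. Because $\mathbf{B}$ is symmetric and the bracket is alternating, $C_{\mathbf{B}}$ changes sign under every transposition, so $C_{\mathbf{B}}\in\wedge^{3}\mathfrak{g}^{*}$; thus $\mathbf{B}$ is a cyclic metric exactly when $\mathbf{B}\in\ker\Psi$ for the $\mathfrak{g}$-equivariant map $\Psi\colon\mathrm{Sym}^{2}\mathfrak{g}^{*}\to\wedge^{3}\mathfrak{g}^{*}$, $\mathbf{B}\mapsto C_{\mathbf{B}}$. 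Equivalently, writing $\mathbf{B}(x,y)=K(\phi x,y)$ with $K$ the Killing form and $\phi$ the induced $K$-self-adjoint endomorphism, a short manipulation using the invariance of $K$ turns the cyclic identity into the single operator equation $\phi[x,y]+[\phi x,y]+[x,\phi y]=0$ for all $x,y$. Either formulation makes the equivariance manifest and reduces the problem to root data.

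This viewpoint already explains the exception. Applying $\Psi$ to $K$ gives $\Psi(K)=3\,K([\,\cdot,\cdot\,],\cdot)$, the Cartan $3$-form, which is nonzero; hence the invariant line in $\mathrm{Sym}^{2}\mathfrak{g}^{*}$ does \emph{not} lie in the kernel, and the usual principle that invariant symmetric forms are multiples of $K$ is of no help. The real issue is a multiplicity count: with $\dim\mathfrak{g}=d$ the source has dimension $\binom{d+1}{2}$ and the target $\binom{d}{3}$. For $\mathfrak{sl}(2,\mathbb{C})$ one has $d=3$, so $\wedge^{3}\mathfrak{g}^{*}$ is one-dimensional and $\Psi$ must have a kernel of dimension at least $5$, which is exactly the $5$-parameter family of cyclic metrics found by direct computation. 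For every other simple type $d\geq 8$, the target is far larger, so injectivity of $\Psi$ is at least possible; the task is to establish it.

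To prove $\ker\Psi=0$ in rank $\geq 2$ I would fix a Cartan subalgebra $\mathfrak{h}$ with root system $\Phi$ and root vectors $e_{\alpha}$, and read off constraints by feeding triples of weight vectors into the cyclic identity. Testing $(h,e_{\alpha},e_{\beta})$ with $h\in\mathfrak{h}$ gives $\mathbf{B}(e_{\alpha},e_{\beta})=0$ whenever $\alpha\neq\beta$ and $\alpha+\beta\notin\Phi$, and shows $\mathbf{B}(e_{\gamma},\cdot)|_{\mathfrak{h}}$ is proportional to $\gamma$; testing $(h,e_{\alpha},e_{-\alpha})$ yields $2\alpha(h)\mathbf{B}(e_{\alpha},e_{-\alpha})+\mathbf{B}(t_{\alpha},h)=0$ with $t_{\alpha}=[e_{\alpha},e_{-\alpha}]\in\mathfrak{h}$, tying the Cartan block to the diagonal numbers $\mathbf{B}(e_{\alpha},e_{-\alpha})$. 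The decisive step uses rank $\geq 2$: there exist roots with $\alpha+\beta\in\Phi$, giving a triangle $\alpha,\beta,\gamma=-(\alpha+\beta)$, and testing $(e_{\alpha},e_{\beta},e_{\gamma})$ produces the relation $N_{\alpha\beta}\mathbf{B}(e_{\gamma},e_{-\gamma})+N_{\beta\gamma}\mathbf{B}(e_{\alpha},e_{-\alpha})+N_{\gamma\alpha}\mathbf{B}(e_{\beta},e_{-\beta})=0$ among these diagonal numbers. Combined with the symmetry $\mathbf{B}(t_{\alpha},t_{\beta})=\mathbf{B}(t_{\beta},t_{\alpha})$ and the triangle identities for the structure constants $N_{\alpha\beta}$, these relations force all $\mathbf{B}(e_{\alpha},e_{-\alpha})=0$, after which the Cartan block and the remaining cross terms vanish in turn, giving $\mathbf{B}=0$; in rank one no such triangle exists, the triangle relation is vacuous, and the surviving diagonal number is precisely the origin of the $\mathfrak{sl}(2,\mathbb{C})$ exception. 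I expect the main obstacle to be the uniform bookkeeping across all pairs of root spaces together with the normalization of the $e_{\alpha}$, propagating the vanishing along the connected Dynkin diagram, and in particular handling the non-simply-laced types, where the triangle identities among the $N_{\alpha\beta}$ are weighted by root lengths.
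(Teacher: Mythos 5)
First, an important caveat: the paper does not prove Theorem \ref{1-thm-main1} at all --- it is quoted from \cite{DB09} and used as a black box --- so there is no internal proof to compare yours against. Judged on its own terms, your root-space strategy is sound and is essentially the classical route of \cite{DB09,Dzhum10}. The relations you extract are correct: the triple $(h,h',e_{\gamma})$ shows $\mathbf{B}(e_{\gamma},\cdot)|_{\mathfrak{h}}$ is proportional to $\gamma$; the triple $(h,e_{\alpha},e_{\beta})$ kills $\mathbf{B}(e_{\alpha},e_{\beta})$ whenever $\alpha\neq\beta$ and $\alpha+\beta\notin\Phi\cup\{0\}$; and $(h,e_{\alpha},e_{-\alpha})$ gives $\mathbf{B}(t_{\alpha},h)=-2\alpha(h)f(\alpha)$ with $f(\alpha):=\mathbf{B}(e_{\alpha},e_{-\alpha})$, whence symmetry of $\mathbf{B}$ on $\mathfrak{h}$ yields $(\alpha,\beta)\bigl(f(\alpha)-f(\beta)\bigr)=0$ and irreducibility of the root system makes $f$ constant. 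Your worry about root-length weights in the triangle identity is unnecessary: normalizing $K(e_{\alpha},e_{-\alpha})=1$, invariance of $K$ gives $N_{\alpha\beta}=N_{\beta\gamma}=N_{\gamma\alpha}$ directly for $\alpha+\beta+\gamma=0$, so any single triangle (which exists once the rank is at least $2$) turns your relation into $3Nf=0$ with $N\neq0$, forcing $f\equiv0$ and hence $\mathbf{B}|_{\mathfrak{h}\times\mathfrak{h}}=0$.

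Two steps are glossed and do need explicit arguments to close the proof. (i) For $\gamma=\alpha+\beta\in\Phi$, the triple $(h,e_{\alpha},e_{\beta})$ gives $(\alpha-\beta)(h)\mathbf{B}(e_{\alpha},e_{\beta})+N_{\alpha\beta}c_{\gamma}(\alpha+\beta)(h)=0$ where $\mathbf{B}(e_{\gamma},h)=c_{\gamma}\gamma(h)$; linear independence of $\alpha-\beta$ and $\alpha+\beta$ kills both terms, and since every root in an irreducible system of rank $\geq2$ is a sum of two roots, this disposes of $\mathbf{B}(e_{\gamma},\mathfrak{h})$ and of the cross terms with $\alpha+\beta\in\Phi$. (ii) The diagonal entries $\mathbf{B}(e_{\alpha},e_{\alpha})$ are reached by none of your tests, since $C_{\mathbf{B}}$ is alternating and so vanishes on repeated arguments; you need an extra triple such as $(e_{\beta},e_{\alpha-\beta},e_{\alpha})$ for a decomposition $\alpha=\beta+(\alpha-\beta)$ into roots, which, after the cross terms already vanish, reduces to $N_{\beta,\alpha-\beta}\mathbf{B}(e_{\alpha},e_{\alpha})=0$. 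With these two additions your sketch becomes a complete proof; it is also consistent with your dimension count and with the five-parameter family $4\mathbf{B}(X,Y)+\mathbf{B}(H,H)=0$ on $\mathfrak{sl}(2,\mathbb{C})$ that the paper records in the proof of Proposition \ref{2-lem-sl(2)}.
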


Cyclic metrics also play a central role in the study of homogeneous structures on Riemannian and pseudo-Riemannian manifolds \cite{cl19,tv83}. Let $(M,g)$ be a connected Riemannian or pseudo-Riemannian manifold. A homogeneous structure  on $(M,g)$ is a tensor field $S$ of type $(1,2)$ such that
$$\widetilde{\nabla}g=\widetilde{\nabla}R=\widetilde{\nabla}S=0,$$
where $\widetilde{\nabla}=\nabla-S$, $\nabla$ is the Levi-Civita connection of $(M,g)$ and $R$ is the Riemannian curvature tensor of $(M,g)$.
Ambrose and Singer in \cite{as58} proved that,  a connected, simply connected and complete Riemannian manifold is homogeneous (i.e., the isometry group $\mathrm{Iso}(M,g)$ acts transitively on $M$) if and only if it admits a homogeneous structure.
This result was later extended to the pseudo-Riemannian setting by Gadea and Oubi\~{n}a \cite{go97}.

To understand the algebraic underpinnings of these geometric structures, consider a real vector space $V$
equipped with a non-degenerate inner product $\langle\cdot,\cdot\rangle$,
which serves as a model space for the tangent space at any point of a homogeneous pseudo-Riemannian manifold $(M,g)$. Define the space of algebraic $(0,3)$-tensors
 $$\mathcal{S}(V)=\{S\in \otimes^{3}V^{*}|S(x,y,z)=-S(x,z,y), \forall x,y,z\in V\},$$
 which encapsulates the same skew-symmetry condition as $\widetilde{\nabla}g=0$  (where $S(x,y,z)=g(S_x{y},z)$).
 The inner product on $V$ induces a natural inner product on  $\mathcal{S}(V)$, leading to an orthogonal decomposition \cite{go97,tv83}
$$\mathcal{S}(V)=\mathcal{S}_{1}(V)\oplus \mathcal{S}_{2}(V)\oplus\mathcal{S}_{3}(V),$$
where  the components are characterized by
$$\mathcal{S}_{1}(V)\oplus \mathcal{S}_{2}(V)
=\{S\in\mathcal{S}(V)|S(x,y,z)+S(y,z,x)+S(z,x,y)=0, \forall x,y,z\in V \},$$
$$\mathcal{S}_{3}(V)=\{S\in\mathcal{S}(V)|S(x,y,z)+S(y,x,z)=0,\forall x,y,z\in V\}.$$
This decomposition is geometrically significant: homogeneous pseudo-Riemannian manifolds admitting a homogeneous structure $S\in \mathcal{S}_{3}$ are precisely the naturally reductive spaces \cite{go97},
while those with $S\in \mathcal{S}_{1}\oplus \mathcal{S}_{2}$ are termed cotorsionless or cyclic homogeneous manifolds. The latter class has been extensively studied in \cite{bie97,deleo14,deleoM08,fp06,ggo15,ggo16,kt87,pv91}.

A particularly tractable setting arises when the manifold is itself a Lie group.
Let $(M=G,g)$ be a connected Lie group $G$ equipped  with a left invariant pseudo-Riemannian metric $g$, uniquely determined at the Lie algebraic level by  a non-degenerate inner product $\langle\cdot,\cdot\rangle$ on the Lie algebra $\mathfrak{g}$.
The  tensor $\widetilde{S}$ defined by $\widetilde{S}_{x}y=\nabla_{x}y$ for $x,y\in \mathfrak{g}$ yields a canonical homogeneous structure on  $(M=G,g)$.
A fundamental result states that $g$ is also right invariant if and only if
$\langle\cdot,\cdot\rangle$ is ad-invariant,  which occurs precisely when $\widetilde{S}\in \mathcal{S}_{3}$.
The structure and classification of ad-invariant metric Lie algebras have been deeply investigated in \cite{fs87,mr85}, see \cite{ovando16} for a survey.
 Conversely, if  $\widetilde{S}\in \mathcal{S}_{1}\oplus \mathcal{S}_{2}$, the metric is called cyclic, and the corresponding inner product $\langle\cdot,\cdot\rangle$ is a cyclic metric.
The geometric properties and classification of such cyclic metric Lie groups are explored in \cite{abol25,cl16,ggo15,tz21}.

In this paper, we undertake a systematic study of the structure of cyclic metric Lie algebras, with a focus on the non-solvable case. Our investigation leads to several key results, which we now summarize.
\begin{thm}\label{1-thm-main1.5}
Let  $(\mathfrak{g},\mathbf{B})$ be a  cyclic metric Lie algebra over $\mathbb{F}$ with   a Levi-decomposition $\mathfrak{g}=\mathfrak{s}+\mathfrak{r}$, where $\mathfrak{s}$ denotes a semisimple Levi-factor of $\mathfrak{g}$ and $\mathfrak{r}$ is the radical of $\mathfrak{g}$.
Suppose that $\mathbf{B}|_{\mathfrak{r}}$ is non-degenerate,
or $\mathbf{B}|_{\mathrm{nil}\,\mathfrak{g}}$  is non-degenerate or Lorentz, then $(\mathfrak{g},\mathbf{B})=(\mathfrak{s},\mathbf{B}|_{\mathfrak{s}})\oplus(\mathfrak{r},\mathbf{B}|_{\mathfrak{r}})$
  is the orthogonal direct product of a semisimple cyclic metric Lie algebra
$(\mathfrak{s},\mathbf{B}|_{\mathfrak{s}})$
 and a solvable cyclic metric Lie algebra $(\mathfrak{r},\mathbf{B}|_{\mathfrak{r}})$.
\end{thm}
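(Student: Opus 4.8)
The plan is to reduce everything to two assertions: that the nontrivial part $\mathfrak{m}:=[\mathfrak{s},\mathfrak{r}]$ of the $\mathfrak{s}$-action on the radical is $\mathbf{B}$-orthogonal to $\mathfrak{r}$, and that each hypothesis forces $\mathfrak{m}=0$. First I record the standing structural facts (in characteristic zero, where Levi decomposition and complete reducibility are available): $[\mathfrak{g},\mathfrak{r}]\subseteq\mathfrak{n}:=\mathrm{nil}\,\mathfrak{g}$, the Levi factor is perfect ($\mathfrak{s}=[\mathfrak{s},\mathfrak{s}]$), and by Weyl complete reducibility $\mathfrak{r}=\mathfrak{r}^{\mathfrak{s}}\oplus\mathfrak{m}$ as $\mathfrak{s}$-modules, where $\mathfrak{r}^{\mathfrak{s}}=\{r:[\mathfrak{s},r]=0\}$ and $\mathfrak{m}=[\mathfrak{s},\mathfrak{r}]\subseteq\mathfrak{n}$ carries no trivial submodule. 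The desired conclusion ``orthogonal direct product'' is precisely $\mathbf{B}(\mathfrak{s},\mathfrak{r})=0$ together with $\mathfrak{m}=0$. Two elementary consequences of the cyclic identity will drive the proof: expanding $\mathbf{B}([a,b],v)+\mathbf{B}([b,v],a)+\mathbf{B}([v,a],b)=0$ for $v\in\mathfrak{r}^{\mathfrak{s}}$ and using perfectness yields $\mathbf{B}(\mathfrak{s},\mathfrak{r}^{\mathfrak{s}})=0$; and for any ideal $\mathfrak{i}$ and $x,y\in\mathfrak{i}^{\perp}$, $z\in\mathfrak{i}$, the brackets $[y,z],[z,x]$ lie in $\mathfrak{i}$, so the cyclic identity collapses to $\mathbf{B}([x,y],z)=0$, showing that $\mathfrak{i}^{\perp}$ is always a subalgebra.

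Next I would establish the orthogonality $\mathbf{B}(\mathfrak{s},\mathfrak{r})=0$ for a suitably chosen Levi factor. If $\mathbf{B}|_{\mathfrak{r}}$ is non-degenerate then $\mathfrak{r}\cap\mathfrak{r}^{\perp}=0$, so a dimension count gives $\mathfrak{g}=\mathfrak{r}\oplus\mathfrak{r}^{\perp}$ with $\mathfrak{r}^{\perp}$ a subalgebra mapping isomorphically onto $\mathfrak{g}/\mathfrak{r}$; hence $\mathfrak{r}^{\perp}$ is a Levi factor orthogonal to $\mathfrak{r}$, and we may take $\mathfrak{s}=\mathfrak{r}^{\perp}$. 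If instead $\mathbf{B}|_{\mathfrak{n}}$ is non-degenerate, the same reasoning gives $\mathfrak{g}=\mathfrak{n}\oplus\mathfrak{n}^{\perp}$; now $\mathfrak{n}^{\perp}$ is a larger subalgebra, but its radical $\mathfrak{a}=\mathrm{rad}(\mathfrak{n}^{\perp})$ satisfies $[\mathfrak{s},\mathfrak{a}]\subseteq\mathfrak{n}\cap\mathfrak{a}=0$, so $\mathfrak{a}\subseteq\mathfrak{r}^{\mathfrak{s}}$ and $\mathfrak{r}=\mathfrak{n}\oplus\mathfrak{a}$. Taking $\mathfrak{s}$ to be a Levi factor of $\mathfrak{n}^{\perp}$, the first orthogonality input ($\mathbf{B}(\mathfrak{s},\mathfrak{a})=0$) together with $\mathbf{B}(\mathfrak{s},\mathfrak{n})=0$ again yields $\mathbf{B}(\mathfrak{s},\mathfrak{r})=0$.

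With orthogonality in hand, the triviality of the action follows cleanly. The cyclic identity applied to $s\in\mathfrak{s}$, $r,r'\in\mathfrak{r}$ reads $\mathbf{B}([s,r],r')-\mathbf{B}(r,[s,r'])=-\mathbf{B}(s,[r,r'])$, and the right-hand side vanishes because $[r,r']\in\mathfrak{r}$; hence each $\mathrm{ad}(s)|_{\mathfrak{r}}$ is self-adjoint with respect to $\mathbf{B}|_{\mathfrak{r}}$. A commutator of $\mathbf{B}|_{\mathfrak{r}}$-self-adjoint operators is $\mathbf{B}|_{\mathfrak{r}}$-skew-adjoint, while it is also self-adjoint as a value of the representation; being both, it maps $\mathfrak{r}$ into $\mathrm{rad}(\mathbf{B}|_{\mathfrak{r}})$. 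Since $\mathfrak{s}=[\mathfrak{s},\mathfrak{s}]$ this gives $\mathfrak{m}=[\mathfrak{s},\mathfrak{r}]\subseteq\mathrm{rad}(\mathbf{B}|_{\mathfrak{r}})$, and as $\mathfrak{m}\subseteq\mathfrak{n}$ also $\mathfrak{m}\subseteq\mathrm{rad}(\mathbf{B}|_{\mathfrak{n}})$. When $\mathbf{B}|_{\mathfrak{r}}$ or $\mathbf{B}|_{\mathfrak{n}}$ is non-degenerate the relevant radical is zero, so $\mathfrak{m}=0$, completing these cases.

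The genuinely delicate point is the Lorentzian hypothesis, where the orthogonal splitting above is unavailable because $\mathbf{B}|_{\mathfrak{n}}$ may be degenerate. Here I expect to argue that $\mathfrak{m}$ lies in a totally isotropic (indeed degenerate) subspace of $(\mathfrak{n},\mathbf{B}|_{\mathfrak{n}})$, so that the Lorentzian signature forces $\dim\mathfrak{m}\le 1$; since a semisimple Lie algebra admits no nontrivial one-dimensional module and $\mathfrak{m}$ has no trivial submodule, $\dim\mathfrak{m}\le 1$ already forces $\mathfrak{m}=0$. Making the isotropy step rigorous \emph{before} having full orthogonality is the main obstacle, and is where I would concentrate the effort; once $\mathfrak{m}=0$ is secured, the orthogonality $\mathbf{B}(\mathfrak{s},\mathfrak{r})=\mathbf{B}(\mathfrak{s},\mathfrak{r}^{\mathfrak{s}})=0$ follows from the first orthogonality input and the proof concludes uniformly.
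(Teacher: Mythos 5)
Your treatment of the first two hypotheses is correct and is essentially the paper's own route: split the algebra against the orthogonal complement of the non-degenerate ideal ($\mathfrak{r}$, resp.\ $\mathrm{nil}\,\mathfrak{g}$), note that this complement is a subalgebra isomorphic to the semisimple (resp.\ reductive) quotient so that a Levi factor $\mathfrak{s}$ may be taken orthogonal to $\mathfrak{r}$, and then use the observation that each $\mathrm{ad}(s)|_{\mathfrak{r}}$ is self-adjoint once $\mathbf{B}(\mathfrak{s},[\mathfrak{r},\mathfrak{r}])=0$, so that the perfect algebra $\mathfrak{s}=[\mathfrak{s},\mathfrak{s}]$ acts by operators that are simultaneously self- and skew-adjoint and therefore map into the radical of the restricted form. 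This is exactly the combination of Theorem \ref{2-thm-decom}, Corollary \ref{2-cor-reductive} and Lemma \ref{2-prop-semisimple} used in Theorem \ref{2.3-thm-sol-nondege} and Corollary \ref{2.3-cor-non-degenerate}; the minor point that the conclusion is independent of the choice of Levi factor is harmless, since once one Levi factor centralizes $\mathfrak{r}$ it is the unique one.

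The genuine gap is precisely where you flagged it: the Lorentz case. You need $\mathfrak{m}=[\mathfrak{s},\mathrm{nil}\,\mathfrak{g}]$ to be $\mathbf{B}|_{\mathrm{nil}\,\mathfrak{g}}$-isotropic, but the self-adjointness of $\mathrm{ad}(s)|_{\mathrm{nil}\,\mathfrak{g}}$ would require $\mathbf{B}(\mathfrak{s},[\mathrm{nil}\,\mathfrak{g},\mathrm{nil}\,\mathfrak{g}])=0$, which is not available at the outset, and you offer no substitute. The paper closes this (proof of Proposition \ref{2-lem-tran-lorentz}) by an induction along the upper central series $0\subset C^{1}(\mathrm{nil}\,\mathfrak{g})\subset C^{2}(\mathrm{nil}\,\mathfrak{g})\subset\cdots\subset\mathrm{nil}\,\mathfrak{g}$. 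At the base, $C^{1}=C(\mathrm{nil}\,\mathfrak{g})$ is abelian, so $\mathrm{ad}(s)|_{C^{1}}$ is self-adjoint unconditionally; Lemma \ref{2-prop-semisimple} then makes $[\mathfrak{s},C^{1}]$ isotropic, hence of dimension at most $1$ by the Lorentz hypothesis, hence zero (a one-dimensional module over a perfect algebra is trivial, and $[\mathfrak{s},C^{1}]$ has no trivial summand), and then $\mathbf{B}(\mathfrak{s},C^{1})=0$ by Lemma \ref{2-lem-decomposition}. For the inductive step, $[C^{i+1},C^{i+1}]\subset C^{i}$ together with the previously established $\mathbf{B}(\mathfrak{s},C^{i})=0$ yields exactly the self-adjointness of $\mathrm{ad}(s)|_{C^{i+1}}$ needed to repeat the argument, and the induction terminates with $[\mathfrak{s},\mathrm{nil}\,\mathfrak{g}]=0$ and $\mathbf{B}(\mathfrak{s},\mathrm{nil}\,\mathfrak{g})=0$, after which your concluding step applies. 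Without this (or an equivalent) filtration argument the Lorentz case remains unproved.
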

\begin{thm}\label{1-thm-main2}
Let  $(\mathfrak{g},\mathbf{B})$ be a complex  cyclic metric Lie algebra with a Levi-decomposition $\mathfrak{g}=\mathfrak{s}+\mathfrak{r}$.

(a) Suppose that $\tilde{\mathfrak{s}}\subset \mathfrak{s}$ is a simple factor  not isomorphic to $\mathfrak{sl}(2,\mathbb{C})$, then $\mathbf{B}(\tilde{\mathfrak{s}},\mathfrak{g})=\mathbf{B}([\tilde{\mathfrak{s}},\mathfrak{g}],\mathfrak{g})=0$.

(b)  Suppose that $\tilde{\mathfrak{s}}=\mathfrak{sl}(2,\mathbb{C})\oplus\cdots\oplus\mathfrak{sl}(2,\mathbb{C})\subset \mathfrak{s}$ is a semisimple factor,
 and the dimension of every $\tilde{\mathfrak{s}}$-irreducible subspace of  $\mathfrak{r}$ is not equal to $3$,  then $\mathbf{B}(\tilde{\mathfrak{s}},\mathfrak{r})=\mathbf{B}([\tilde{\mathfrak{s}},\mathfrak{r}],\mathfrak{r})=0$.

 (c) Let $\mathfrak{r}=\mathfrak{a}+\mathrm{nil}\,\mathfrak{g}$ be the decomposition under the action of $\mathfrak{s}$, where $[\mathfrak{s},\mathfrak{a}]=0$ and $\mathrm{nil}\,\mathfrak{g}$ is the nilradical of $\mathfrak{g}$.
 Suppose that $[\mathfrak{a},\mathrm{nil}\,\mathfrak{g}]=\mathrm{nil}\,\mathfrak{g}$,
 then $\mathbf{B}(\mathfrak{s},\mathfrak{r})=\mathbf{B}([\mathfrak{s},\mathfrak{r}],\mathfrak{r})=0$.
\end{thm}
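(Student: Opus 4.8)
The plan is to run all three parts through a single computation: specialize the cyclic identity to a triple with two entries in a semisimple piece $\mathfrak{t}$ (a simple factor, an $\mathfrak{sl}(2,\mathbb{C})$-block, or all of $\mathfrak{s}$) and one entry $w$ in an ambient subalgebra $U$ (namely $\mathfrak{g}$ or $\mathfrak{r}$) that is a $\mathfrak{t}$-module. For $a,b\in\mathfrak{t}$ this gives the \emph{transfer relation}
\begin{equation*}
\mathbf{B}([a,b],w)=\mathbf{B}(b,[a,w])-\mathbf{B}(a,[b,w]).
\end{equation*}
I would isolate one reusable reduction from it. Suppose the ``flat'' vanishing $\mathbf{B}(\mathfrak{t},U)=0$ is already known; since $[U,U]\subseteq U$ this also gives $\mathbf{B}(\mathfrak{t},[U,U])=0$, so the transfer relation yields $\mathbf{B}([x,u],v)=\mathbf{B}([x,v],u)$ for $x\in\mathfrak{t}$, $u,v\in U$, i.e.\ $\mathbf{B}(\mathrm{ad}\,x\,u,v)=\mathbf{B}(u,\mathrm{ad}\,x\,v)$. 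Then for $x,y\in\mathfrak{t}$ the operator $\mathrm{ad}[x,y]|_{U}$ is simultaneously $\mathbf{B}$-self-adjoint (because $[x,y]\in\mathfrak{t}$) and $\mathbf{B}$-skew-adjoint (being a commutator of $\mathbf{B}$-self-adjoint operators), whence $\mathbf{B}([[x,y],u],v)=0$; as $\mathfrak{t}=[\mathfrak{t},\mathfrak{t}]$ this is $\mathbf{B}([\mathfrak{t},U],U)=0$. Thus in every part the ``derived'' equality follows formally from the flat one, and the real work is the flat vanishing.

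For (a) and (b) I would first kill the trivial isotypic part: if $[\mathfrak{t},w]=0$ the transfer relation gives $\mathbf{B}(\mathfrak{t},w)=\mathbf{B}([\mathfrak{t},\mathfrak{t}],w)=0$, so $\mathbf{B}(\mathfrak{t},\cdot)$ already annihilates every $\mathfrak{t}$-fixed vector, in particular every other simple factor of $\mathfrak{s}$. By Weyl's theorem I would decompose $U$ ($=\mathfrak{g}$ in (a), $=\mathfrak{r}$ in (b)) into $\mathfrak{t}$-irreducibles; the transfer operator $\beta\mapsto\mathbf{B}([a,b],w)-\mathbf{B}(b,[a,w])+\mathbf{B}(a,[b,w])$ is $\mathfrak{t}$-equivariant and preserves the isotypic splitting, so $\beta=\mathbf{B}|_{\mathfrak{t}\times U}$ is constrained independently on each irreducible summand $V$. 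The key claim is that this relation has a nonzero solution $\beta\colon\mathfrak{t}\times V\to\mathbb{F}$ only when $V$ is the adjoint module of an $\mathfrak{sl}(2,\mathbb{C})$-summand of $\mathfrak{t}$ (so $\dim V=3$): taking $V\cong\mathfrak{t}$ turns the transfer relation into the cyclic-metric equation on $\mathfrak{t}$, which by Theorem \ref{1-thm-main1} has only the zero symmetric solution unless $\mathfrak{t}=\mathfrak{sl}(2,\mathbb{C})$, while the remaining irreducibles are excluded by contracting two transfer relations against dual bases of $\mathfrak{t}$, producing a Casimir eigenvalue identity that fails unless $V$ is adjoint-type. (For $\mathfrak{t}=\mathfrak{sl}(2,\mathbb{C})^{\oplus k}$ a Clebsch--Gordan count shows a $3$-dimensional irreducible must be $V_{2}$ in one factor and trivial in the rest, which is exactly why the hypothesis is ``dimension $\ne3$''.) Granting this, (a) forces $\beta=0$ on every summand since $\tilde{\mathfrak{s}}\not\cong\mathfrak{sl}(2,\mathbb{C})$, giving $\mathbf{B}(\tilde{\mathfrak{s}},\mathfrak{g})=0$, and (b) forces $\beta=0$ once the $3$-dimensional summands are absent, giving $\mathbf{B}(\tilde{\mathfrak{s}},\mathfrak{r})=0$; the reduction above then supplies the bracket equalities.

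For (c) the hypotheses let me bypass the representation theory entirely. Since $[\mathfrak{s},\mathfrak{a}]=0$, the cyclic identity on $(x,y,a)$ with $x,y\in\mathfrak{s}$, $a\in\mathfrak{a}$ gives $\mathbf{B}([x,y],a)=0$, hence $\mathbf{B}(\mathfrak{s},\mathfrak{a})=0$; it remains to prove $\mathbf{B}(\mathfrak{s},\mathrm{nil}\,\mathfrak{g})=0$. Write $\rho(x)=\mathrm{ad}\,x|_{\mathrm{nil}\,\mathfrak{g}}$, $\alpha(a)=\mathrm{ad}\,a|_{\mathrm{nil}\,\mathfrak{g}}$ (these commute, as $[\mathfrak{s},\mathfrak{a}]=0$) and set $\phi_{x}=\mathbf{B}(x,\cdot)$, $\psi_{a}=\mathbf{B}(a,\cdot)$ on $\mathrm{nil}\,\mathfrak{g}$. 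Two instances of the cyclic identity—on $(y,z,m)$ with $x=[y,z]$, and on $(x,a,m)$ using $[x,a]=0$, where $m\in\mathrm{nil}\,\mathfrak{g}$—read, in transposed-operator form,
\begin{equation*}
\phi_{[y,z]}=\rho(y)^{*}\phi_{z}-\rho(z)^{*}\phi_{y},\qquad \alpha(a)^{*}\phi_{x}=\rho(x)^{*}\psi_{a}.
\end{equation*}
Applying $\alpha(a)^{*}$ to the first identity, commuting it past the $\rho$'s, and substituting the second, I get $\alpha(a)^{*}\phi_{[y,z]}=[\rho(y)^{*},\rho(z)^{*}]\psi_{a}=-\rho([y,z])^{*}\psi_{a}$, whereas the second identity with $x=[y,z]$ gives $\alpha(a)^{*}\phi_{[y,z]}=\rho([y,z])^{*}\psi_{a}$; hence $\rho([y,z])^{*}\psi_{a}=0$, and since $\mathfrak{s}=[\mathfrak{s},\mathfrak{s}]$, $\rho(x)^{*}\psi_{a}=0$ for all $x$. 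The second identity then yields $\alpha(a)^{*}\phi_{x}=0$ for all $a$, so $\phi_{x}$ lies in the annihilator of $\sum_{a}\operatorname{Im}\alpha(a)=[\mathfrak{a},\mathrm{nil}\,\mathfrak{g}]=\mathrm{nil}\,\mathfrak{g}$ inside $(\mathrm{nil}\,\mathfrak{g})^{*}$, forcing $\phi_{x}=0$. With $\mathbf{B}(\mathfrak{s},\mathfrak{a})=0$ this is $\mathbf{B}(\mathfrak{s},\mathfrak{r})=0$, and the reduction gives $\mathbf{B}([\mathfrak{s},\mathfrak{r}],\mathfrak{r})=0$.

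The main obstacle is the representation-theoretic lemma underlying (a) and (b): showing that the transfer relation has only the trivial solution away from the $\mathfrak{sl}(2,\mathbb{C})$-adjoint summands. Theorem \ref{1-thm-main1} is the conceptual anchor, but converting it into a statement about arbitrary irreducibles occurring in $\mathfrak{r}$ requires the equivariance of the transfer operator together with the Casimir and Clebsch--Gordan bookkeeping; I expect this bookkeeping to be exactly what the cyclic quadruples introduced later in the paper are designed to manage. By contrast, part (c) is self-contained modulo the self-adjointness reduction, its entire force coming from the commuting actions of $\mathfrak{s}$ and $\mathfrak{a}$ and the surjectivity $[\mathfrak{a},\mathrm{nil}\,\mathfrak{g}]=\mathrm{nil}\,\mathfrak{g}$.
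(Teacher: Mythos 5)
Your reduction of the bracket identities to the ``flat'' vanishing is sound and is exactly the paper's Lemma~\ref{2-prop-semisimple}, and your argument for part~(c) is correct and genuinely different from the paper's route. The paper deduces (c) from Theorem~\ref{4-thm-c}, whose proof passes through the hard representation-theoretic results (Theorems~\ref{5-thm-nondegerate-sl(2)} and~\ref{4-thm-b}) and a separate treatment of the three-dimensional adjoint case via the $\mathfrak{so}(3)$ computation of Example~\ref{3-exam-so(3)}. Your operator computation with the commuting actions $\rho$ and $\alpha$ derives $\rho(x)^{*}\psi_{a}=0$ and $\alpha(a)^{*}\phi_{x}=0$, i.e.\ $\mathbf{B}(\mathfrak{a},[\mathfrak{s},\mathrm{nil}\,\mathfrak{g}])=\mathbf{B}(\mathfrak{s},[\mathfrak{a},\mathrm{nil}\,\mathfrak{g}])=0$, directly from the cyclic identity and $[\mathfrak{s},\mathfrak{a}]=0$, with no decomposition into irreducible summands and no case analysis. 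That is a real simplification of this part, and it in fact reproves the full statement of Theorem~\ref{4-thm-c}.

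For (a) and (b), however, there is a genuine gap: the core lemma --- that the relation $\rho([x,y])+\pi^{\ast}(x)\rho(y)-\pi^{\ast}(y)\rho(x)=0$ admits no nonzero solution on an irreducible module unless an $\mathfrak{sl}(2,\mathbb{C})$ factor acts by its adjoint representation and the rest act trivially --- is asserted rather than proved, and neither of your two devices closes it. When $V$ is the adjoint module, the form $\beta(x,w)=\mathbf{B}(x,w)$ with $x\in\tilde{\mathfrak{s}}$ and $w\in V$ carries no symmetry under the module identification $V\cong\tilde{\mathfrak{s}}$, because the two arguments live in different subspaces of $\mathfrak{g}$; so Theorem~\ref{1-thm-main1} only excludes the symmetric solutions, and the transfer relation is not literally the cyclic-metric equation on $\tilde{\mathfrak{s}}$. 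The paper's Theorem~\ref{5-thm-com-trivial} repairs exactly this by embedding the conjugate subalgebras $\{(x,-\pi(x)v)\,|\,x\in\mathfrak{g}\}$ into $\mathfrak{g}+_{\pi}V$ and applying Theorem~\ref{1-thm-main1} to each of them, which yields the missing symmetric relation $\pi^{\ast}(x)\rho(y)+\pi^{\ast}(y)\rho(x)=0$; combined with the defining identity and a root-space argument this forces $\rho=0$. Your Casimir/Clebsch--Gordan bookkeeping for the remaining irreducibles is never carried out, and even if it were, part~(b) still contains the case $V\cong V(2)\otimes V(2)$ for $\mathfrak{sl}(2,\mathbb{C})\oplus\mathfrak{sl}(2,\mathbb{C})$ --- of dimension $9\neq 3$ yet adjoint in each factor, so invisible to a factor-by-factor reduction --- which the paper must dispatch by the explicit seven-step computation of Lemma~\ref{5.2-lem2}. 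The representation-theoretic heart of (a) and (b) is therefore missing from your proposal.
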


Our central result provides a complete structural characterization of  non-semisimple, non-solvable  non-degenerate cyclic metric Lie algebras. To state it, we introduce the following construction,
 a cyclic version of the double extension process \cite{mr85}.
Let $(\mathfrak{h},\mathbf{B}_{\mathfrak{h}})$ and $(\mathfrak{s},\mathbf{B}_{\mathfrak{s}})$ be two non-degenerate cyclic metric Lie algebras, and let $\mathfrak{h}+_{\theta}\mathfrak{s}$ be a Lie algebra obtained by the central extension of $\mathfrak{h}$ with respect to a cocycle $\theta\in(\wedge^{2}\mathfrak{h}^{\ast})\otimes\mathfrak{s}$.
Let
$\pi:\mathfrak{s}\rightarrow \mathrm{Der}(\mathfrak{h}+_{\theta}\mathfrak{s})$ be a Lie algebra homomorphism
such that
$\pi(x)(\mathfrak{h})\subset \mathfrak{h}$ and
$\pi(x)(y)=[x,y]_{\mathfrak{s}}$ for all $x,y\in\mathfrak{s}$. Then the cyclic metric Lie algebra $\left(\mathfrak{s}+_{\pi}(\mathfrak{h}+_{\theta}\mathfrak{s}),\mathbf{B}\right)$ defined by
\begin{eqnarray*}
\mathbf{B}((x_{1},h_{1},y_{1}),(x_{2},h_{2},y_{2}))=\widetilde{\mathbf{B}}(x_{1},x_{2})
+\mathbf{B}_{\mathfrak{h}}(h_{1},h_{2})+\mathbf{B}_{\mathfrak{s}}(x_{1},y_{2})
+\mathbf{B}_{\mathfrak{s}}(y_{1},x_{2}),
\end{eqnarray*}
 for all $x_{i},y_{i}\in \mathfrak{s}$,  $h_{i}\in\mathfrak{h}$, $i=1,2$, is called a double extension of $(\mathfrak{h},\mathbf{B}_{\mathfrak{h}})$ with respect to $\mathfrak{s}$, where $\widetilde{\mathbf{B}}$ is an arbitrary cyclic metric on $\mathfrak{s}$ (possibly degenerate), and $\theta$ should satisfy the identity
\begin{equation*}
  \mathbf{B}_{\mathfrak{s}}(x,\theta(h_{1},h_{2}))=\mathbf{B}_{\mathfrak{h}}([x,h_{2}],h_{1})
-\mathbf{B}_{\mathfrak{h}}([x,h_{1}],h_{2}),\quad \forall x\in \mathfrak{s},h_{1},h_{2}\in \mathfrak{h}.
\end{equation*}
In particular, when $\dim \mathfrak{s}=1$ and $\widetilde{\mathbf{B}}=0$, the cyclic metric Lie algebra
$\left(\mathfrak{s}+_{\pi}(\mathfrak{h}+_{\theta}\mathfrak{s}),\mathbf{B}\right)$
is called a one-dimensional central double extension of $(\mathfrak{h},\mathbf{B}_{\mathfrak{h}})$.
By the same proof as in \cite{fs87},
one can show that every $n$-dimensional  non-degenerate cyclic metric Lie algebra
with a non-zero degenerate center is a  one-dimensional central double extension of an
$(n-2)$-dimensional  non-degenerate cyclic metric Lie algebra.

\begin{thm}\label{1-thm-main3}
Let $(\mathfrak{g},\mathbf{B})$ be a non-degenerate cyclic metric Lie algebra  with   a Levi-decomposition $\mathfrak{g}=\mathfrak{s}+\mathfrak{r}$, over $\mathbb{C}$ or $\mathbb{R}$.

(a) If $[\mathfrak{s},C(\mathrm{nil}\,\mathfrak{g})]=0$, then $[\mathfrak{s},\mathfrak{r}]=0$ and $(\mathfrak{g},\mathbf{B})$ is the orthogonal direct product of $(\mathfrak{s},\mathbf{B}|_{\mathfrak{s}})$
 and  $(\mathfrak{r},\mathbf{B}|_{\mathfrak{r}})$.

(b) If $[\mathfrak{s},C(\mathrm{nil}\,\mathfrak{g})]\neq0$, then we have $\mathbf{B}([\mathfrak{s},C(\mathrm{nil}\,\mathfrak{g})],\mathfrak{r})=0$ and

$(b_1)$ over $\mathbb{C}$, $(\mathfrak{g},\mathbf{B})$  is a double extension of a  cyclic metric  Lie algebra with respect to $\mathfrak{sl}(2,\mathbb{C})$;

$(b_2)$ over $\mathbb{R}$, $(\mathfrak{g},\mathbf{B})$  is a double extension of a  cyclic metric  Lie algebra with respect to $\mathfrak{sl}(2,\mathbb{R})$, $\mathfrak{su}(2)$ or $\mathfrak{sl}(2,\mathbb{C})$ (regarded as a real Lie algebra).
\end{thm}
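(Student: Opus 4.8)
The plan is to organize everything around the $\mathfrak{s}$-module structure of the nilradical $\mathfrak{n}:=\mathrm{nil}\,\mathfrak{g}$ and its center $Z:=C(\mathfrak{n})$ (both $\mathfrak{s}$-invariant characteristic ideals), and to extract the double extension from a distinguished isotropic ideal. The basic tool throughout is the cyclic identity rewritten, for $x\in\mathfrak{s}$, as $\mathbf{B}([x,u],v)-\mathbf{B}(u,[x,v])=-\mathbf{B}(x,[u,v])$, which measures the failure of $\mathfrak{s}$-invariance of $\mathbf{B}$. First I would reduce the Levi factor: decomposing $\mathfrak{s}$ into simple ideals and applying Theorem~\ref{1-thm-main2}(a), any simple factor not of type $\mathfrak{sl}(2)$ is $\mathbf{B}$-orthogonal to all of $\mathfrak{g}$, so non-degeneracy of $\mathbf{B}$ forbids it; over $\mathbb{C}$ this forces $\mathfrak{s}$ to be a sum of copies of $\mathfrak{sl}(2,\mathbb{C})$, and over $\mathbb{R}$, passing to the complexification, every simple real factor must complexify to a sum of $\mathfrak{sl}(2,\mathbb{C})$'s, i.e. be one of $\mathfrak{sl}(2,\mathbb{R})$, $\mathfrak{su}(2)$ or $\mathfrak{sl}(2,\mathbb{C})$ regarded as a real algebra. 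The same identity, used with $x,y\in\mathfrak{s}=[\mathfrak{s},\mathfrak{s}]$ and the $\mathfrak{s}$-trivial complement $\mathfrak{a}$ of $\mathfrak{n}$ in $\mathfrak{r}$ from Theorem~\ref{1-thm-main2}(c), yields at once $\mathbf{B}(\mathfrak{s},\mathfrak{a})=0$, and $\mathbf{B}(\mathfrak{s},Z)=0$ whenever $[\mathfrak{s},Z]=0$.

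For part (a), the point is to propagate $[\mathfrak{s},Z]=0$ outward to $[\mathfrak{s},\mathfrak{n}]=0$. I would induct along the upper central series $0=Z_0\subset Z_1=Z\subset\cdots\subset Z_k=\mathfrak{n}$, each term $\mathfrak{s}$-invariant, and show that a non-trivial $\mathfrak{s}$-isotypic component surviving in some quotient $Z_j/Z_{j-1}$ is incompatible with non-degeneracy. By the inductive hypothesis and the cyclic identity, all $\mathbf{B}$-pairings such a component can enter are forced to vanish except a possible pairing with $\mathfrak{s}$ itself; the cyclic-quadruple analysis underlying Theorem~\ref{1-thm-main2}(b) then shows such a pairing can only be carried by a three-dimensional adjoint piece, which must reside in the center $Z$, contradicting $[\mathfrak{s},Z]=0$. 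Once $[\mathfrak{s},\mathfrak{n}]=0$ is established, we get $[\mathfrak{s},\mathfrak{r}]=[\mathfrak{s},\mathfrak{n}]=0$ and $\mathbf{B}(\mathfrak{s},\mathfrak{r})=0$, so $\mathbf{B}|_{\mathfrak{r}}$ is non-degenerate and Theorem~\ref{1-thm-main1.5} delivers the orthogonal direct product $(\mathfrak{g},\mathbf{B})=(\mathfrak{s},\mathbf{B}|_{\mathfrak{s}})\oplus(\mathfrak{r},\mathbf{B}|_{\mathfrak{r}})$.

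For part (b), the non-vanishing of $[\mathfrak{s},Z]$ produces, through the same bookkeeping, a non-trivial $\mathfrak{s}$-irreducible summand $W=[\mathfrak{s},Z]\subset Z$ that is three-dimensional, isomorphic to the adjoint representation of a single factor $\mathfrak{s}_1\cong\mathfrak{sl}(2)$, and with $\mathbf{B}(\mathfrak{s}_1,\cdot)$ non-degenerate on it. I would first prove the orthogonality $\mathbf{B}([\mathfrak{s},Z],\mathfrak{r})=0$; since $W\subset\mathfrak{n}\subset\mathfrak{r}$, this makes $W$ isotropic, and because $W\subset Z=C(\mathfrak{n})$ with $[\mathfrak{a},W]\subset W$ (as $\mathfrak{a}$ commutes with $\mathfrak{s}$ and hence preserves isotypic components), $W$ is an isotropic abelian ideal of $\mathfrak{g}$. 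The form $\mathbf{B}$ identifies $W$ with $\mathfrak{s}_1^{\ast}$, whence $W^{\perp}=\mathfrak{s}_{\mathrm{rest}}\oplus\mathfrak{r}$ is an ideal with $\mathfrak{g}/W^{\perp}\cong\mathfrak{s}_1$, and $\mathbf{B}$ descends to a non-degenerate cyclic metric on $\mathfrak{h}:=W^{\perp}/W$. Reading off the induced action $\pi$ of $\mathfrak{s}_1$ and the cocycle $\theta$ from the bracket components landing in $W$, and verifying the defining compatibility identities, then exhibits $(\mathfrak{g},\mathbf{B})$ as a double extension of $(\mathfrak{h},\mathbf{B}_{\mathfrak{h}})$ by $\mathfrak{s}_1$, which is $\mathfrak{sl}(2,\mathbb{C})$ over $\mathbb{C}$ and one of the three listed real forms over $\mathbb{R}$.

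The hard part will be the uniform control, in both parts, of how the non-degenerate cyclic form couples the semisimple part to the deeper layers of the nilradical: precisely, establishing that every non-trivial $\mathfrak{s}$-isotypic interaction compatible with non-degeneracy must be a three-dimensional adjoint piece lying in $C(\mathfrak{n})$. This is where the cyclic-quadruple machinery of Theorem~\ref{1-thm-main2} has to be pushed beyond its stated form and interlaced with the induction along the central series, since the cyclic (rather than ad-invariant) nature of $\mathbf{B}$ destroys the naive orthogonality of distinct isotypic components. A secondary difficulty is the real-field bookkeeping, where conjugate pairs of complex $\mathfrak{sl}(2,\mathbb{C})$ factors must be recognized and matched to the real form $\mathfrak{sl}(2,\mathbb{C})_{\mathbb{R}}$ in the double extension.
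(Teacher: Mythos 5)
Your overall architecture matches the paper's: reduce $\mathfrak{s}$ to $\mathfrak{sl}(2)$-type factors via Theorem~\ref{1-thm-main2}(a) and non-degeneracy, work up the upper central series of $\mathrm{nil}\,\mathfrak{g}$ for part (a), and in part (b) extract a three-dimensional isotropic abelian ideal $W\subset C(\mathrm{nil}\,\mathfrak{g})$ carrying the adjoint representation of a single factor $\mathfrak{s}_{1}$ and exhibit $\mathfrak{g}$ as a double extension (the paper realizes the base as the subspace $(\mathfrak{s}_{1}+W)^{\perp}$ with a modified bracket, which is equivalent to your quotient $W^{\perp}/W$). However, the two steps you lean on most heavily are precisely where the work lies, and the proposal does not supply them.

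First, the orthogonality $\mathbf{B}([\mathfrak{s},C^{k}(\mathrm{nil}\,\mathfrak{g})],\mathfrak{r})=0$ at the first level $k$ where $[\mathfrak{s},C^{k}(\mathrm{nil}\,\mathfrak{g})]\neq0$ is not a routine consequence of ``the inductive hypothesis and the cyclic identity''. To kill the pairing of $[\mathfrak{s},C^{k}(\mathrm{nil}\,\mathfrak{g})]$ with an $\mathfrak{s}$-invariant complement of $C^{k}(\mathrm{nil}\,\mathfrak{g})$ in $\mathrm{nil}\,\mathfrak{g}$, the paper (Proposition~\ref{6-prop-a}) constructs an auxiliary cyclic metric Lie algebra in which the nilradical bracket is abelianized and the metric is surgically modified, and only then applies Corollary~\ref{2-cor-abelian}; some device of this kind is needed and does not fall out of the quadruple formalism. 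Second, and more seriously, your contradiction in part (a) rests on the assertion that a three-dimensional adjoint piece pairing non-trivially with $\mathfrak{s}$ ``must reside in the center'' — but Theorem~\ref{1-thm-main2} and the cyclic-quadruple analysis are blind to where an irreducible sits in the central series, so a three-dimensional adjoint component at level $k\geq2$ pairing non-degenerately with its $\mathfrak{sl}(2)$-factor is not excluded by anything you have written. The paper locates the degeneracy elsewhere: it chooses irreducibles $V\subset[\mathfrak{s},C^{k}(\mathrm{nil}\,\mathfrak{g})]$ and $U\subset\mathrm{nil}\,\mathfrak{g}$ with $[V,U]\neq0$ (possible exactly because $V\not\subset C(\mathrm{nil}\,\mathfrak{g})$) and shows that the commutator $[V,U]$ — which lives one level down, where $\mathfrak{s}$ acts trivially — is orthogonal to all of $\mathfrak{g}$, using $[\mathfrak{a},V]=0$, itself forced by non-degeneracy from $\mathbf{B}([\mathfrak{a},[\mathfrak{s},C^{k}(\mathrm{nil}\,\mathfrak{g})]],\mathfrak{g})=0$. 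This shift of the degenerate direction from $V$ to $[V,U]$ is the missing idea; the same fact $[\mathfrak{a},[\mathfrak{s},C(\mathrm{nil}\,\mathfrak{g})]]=0$ is also what you need in part (b) to make your $W$ an ideal, since ``$\mathfrak{a}$ preserves isotypic components'' only puts $[\mathfrak{a},W]$ inside the isotypic component, which can strictly contain $W$ when the adjoint representation occurs with multiplicity.
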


Through out the paper, we make the following assumptions.
$\mathbb{F}$ is a field with $\mathrm{char}\, \mathbb{F}\neq 2,3$.
$\mathfrak{g}$ is a Lie algebra over $\mathbb{F}$, unless otherwise mentioned.
$\mathfrak{g}=\mathfrak{s}+\mathfrak{r}$ is  a Levi-decomposition of $\mathfrak{g}$,
where $\mathfrak{s}$ denotes a semisimple Levi-factor and $\mathfrak{r}$  the radical of $\mathfrak{g}$. Every finite dimensional representation of $\mathfrak{s}$ is completely reducible.
$\mathfrak{r}=\mathfrak{a}+\mathrm{nil}\,\mathfrak{g}$ is the decomposition under the action of $\mathfrak{s}$,
where $[\mathfrak{s},\mathfrak{a}]=0$ and $\mathrm{nil}\,\mathfrak{g}$ is the nilradical of $\mathfrak{g}$.
As usual,  $\mathfrak{a}$ acts   diagonally on $\mathrm{nil}\,\mathfrak{g}$ over $\mathbb{C}$.
$C(\mathfrak{g})$ and $C(\mathrm{nil}\,\mathfrak{g})$ denote the centers of $\mathfrak{g}$ and $\mathrm{nil}\,\mathfrak{g}$, respectively.
$\langle\cdot,\cdot\rangle$ is a symmetric bilinear form (not necessarily cyclic),
and $\mathbf{B}$ is a cyclic metric.

This paper is organized as follows. In Section \ref{sec2}, we review basic definitions and prove a key lemma. Section \ref{sec3} is devoted to the study of decomposable cyclic metric Lie algebras. In Section \ref{sec4}, we introduce cyclic quadruples and use them to analyze the interaction between semisimple and radical parts. Finally, in Section \ref{sec5}, we prove Theorem \ref{1-thm-main3}.

\section{Basic definitions and a key lemma}\label{sec2}
A metric Lie algebra is a pair $(\mathfrak{g},\langle\cdot,\cdot\rangle)$, where  $(\mathfrak{g},[\cdot,\cdot])$ is a Lie algebra and  $\langle\cdot,\cdot\rangle$ is a symmetric  bilinear  form on $\mathfrak{g}$. For every  subspace $V$ of  $(\mathfrak{g}, \langle\cdot,\cdot\rangle)$, denote by $V^{\perp}=\{x\in \mathfrak{g}|\langle x,V\rangle=0\}$, the orthogonal complement of $V$ in $\mathfrak{g}$. The subspace $V$ is called isotropic if $V\subset V^{\perp}$.
The metric $\langle\cdot,\cdot\rangle$ or the metric Lie algebra $(\mathfrak{g},\langle\cdot,\cdot\rangle)$  is called non-degenerate
if $\mathfrak{g}^{\perp}=\{0\}$.
We say $(\mathfrak{g}, \langle\cdot,\cdot\rangle)$ is an orthogonal direct product of $(\mathfrak{g}_{1},\langle\cdot,\cdot\rangle|_{\mathfrak{g}_{1}})$ and
$(\mathfrak{g}_{2},\langle\cdot,\cdot\rangle|_{\mathfrak{g}_{2}})$, if $\mathfrak{g}=\mathfrak{g}_{1}\oplus\mathfrak{g}_{2}$ is a direct sum of two ideals $\mathfrak{g}_{1}$ and $\mathfrak{g}_{2}$,
and $\langle\mathfrak{g}_{1},\mathfrak{g}_{2}\rangle=0$.
\begin{defn}
The index of a metric Lie algebra  $(\mathfrak{g},\langle\cdot,\cdot\rangle)$ is defined as
$$\max\{\dim V|V\,\mathrm{is}\, \mathrm{isotropic}\},$$
and is  denoted by $\mathrm{Ind}(\mathfrak{g},\langle\cdot,\cdot\rangle)$.
In particular, it is called Riemannian, Lorentz and trans-Lorentz if the index is 0,  1 and 2, respectively.
\end{defn}

By definition, we can easily derive a decomposition criterion for cyclic metric Lie algebras.
\begin{thm}\label{2-thm-abc}
Let $\mathfrak{g}$ be a  Lie algebra and  $\mathfrak{g}=\mathfrak{h}+\mathfrak{i}$ be a direct sum of  vector spaces,  where $\mathfrak{h}$ is a subalgebra of $\mathfrak{g}$ and $\mathfrak{i}$ is an ideal of $\mathfrak{g}$. Then a metric $\langle\cdot,\cdot\rangle$ on $\mathfrak{g}$ is cyclic if and only if the following conditions hold.

(a) The metric Lie algebras $(\mathfrak{h},\langle\cdot,\cdot\rangle|_{\mathfrak{h}})$ and  $(\mathfrak{i},\langle\cdot,\cdot\rangle|_{\mathfrak{i}})$ are both cyclic.

(b) $\langle[i_{1},i_{2}],h\rangle+\langle[i_{2},h],i_{1}\rangle+\langle[h,i_{1}],i_{2}\rangle=0$
for all $i_{1},i_{2}\in \mathfrak{i}$, $h\in \mathfrak{h}$.

(c) $\langle[h_{1},h_{2}],i\rangle+\langle[h_{2},i],h_{1}\rangle+\langle[i,h_{1}],h_{2}\rangle=0$
for all $h_{1},h_{2}\in \mathfrak{h}$, $i\in \mathfrak{i}$.
\end{thm}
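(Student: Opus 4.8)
The plan is to exploit the fact that the defining cyclic expression $C(x,y,z):=\langle[x,y],z\rangle+\langle[y,z],x\rangle+\langle[z,x],y\rangle$ is trilinear in $(x,y,z)$ and invariant under cyclic permutations of its arguments, i.e. $C(x,y,z)=C(y,z,x)=C(z,x,y)$. Since $\mathfrak{g}=\mathfrak{h}\oplus\mathfrak{i}$ as vector spaces, multilinearity reduces the vanishing of $C$ on all of $\mathfrak{g}\times\mathfrak{g}\times\mathfrak{g}$ to its vanishing on triples each of whose three entries lies entirely in $\mathfrak{h}$ or entirely in $\mathfrak{i}$. Sorting such triples by the number of entries in $\mathfrak{i}$ gives four cases, indexed by $0,1,2,3$, and the two mixed cases each collapse to a single identity under cyclic invariance. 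These four identities are precisely the conditions in (a)--(c), so the whole argument amounts to organizing an eight-term expansion.

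For the forward implication I would assume $\langle\cdot,\cdot\rangle$ is cyclic, i.e. $C\equiv 0$, and specialize the arguments. Taking $x,y,z\in\mathfrak{h}$ and using $[\mathfrak{h},\mathfrak{h}]\subset\mathfrak{h}$ shows that $\langle\cdot,\cdot\rangle|_{\mathfrak{h}}$ is cyclic; taking $x,y,z\in\mathfrak{i}$ and using that $\mathfrak{i}$ is an ideal (so $[\mathfrak{i},\mathfrak{i}]\subset\mathfrak{i}$) gives cyclicity of $\langle\cdot,\cdot\rangle|_{\mathfrak{i}}$; together these give (a). Substituting $(x,y,z)=(i_1,i_2,h)$ reproduces (b) verbatim, and $(x,y,z)=(h_1,h_2,i)$ reproduces (c). Nothing beyond substitution is needed here.

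The content lies in the converse. Assuming (a)--(c), I would take arbitrary $x,y,z\in\mathfrak{g}$, write $x=h+i$, $y=h'+i'$, $z=h''+i''$ with $h,h',h''\in\mathfrak{h}$ and $i,i',i''\in\mathfrak{i}$, and expand $C(x,y,z)$ into $2^3=8$ summands by trilinearity. The all-$\mathfrak{h}$ summand vanishes by cyclicity of $\langle\cdot,\cdot\rangle|_{\mathfrak{h}}$ and the all-$\mathfrak{i}$ summand by cyclicity of $\langle\cdot,\cdot\rangle|_{\mathfrak{i}}$, these being the two halves of (a). The three summands with exactly one $\mathfrak{i}$-entry, namely $C(i,h',h'')$, $C(h,i',h'')$ and $C(h,h',i'')$, differ only by the position of that entry, hence by cyclic permutations; by cyclic invariance each has the shape of the left-hand side of (c), so all three vanish. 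Symmetrically, the three summands with exactly two $\mathfrak{i}$-entries each reduce by cyclic invariance to the shape of (b) and vanish. Summing the eight terms gives $C(x,y,z)=0$, so $\langle\cdot,\cdot\rangle$ is cyclic.

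The only point requiring care---and the closest thing to an obstacle---is the bookkeeping of the mixed terms: one must check that after expansion the three ``one-$\mathfrak{i}$'' contributions are genuine cyclic shifts of a single instance of (c), and likewise the three ``two-$\mathfrak{i}$'' contributions are cyclic shifts of (b), rather than three independent constraints. This is exactly what the cyclic symmetry $C(x,y,z)=C(y,z,x)=C(z,x,y)$ guarantees, which is why it suffices to record one identity in each mixed case, precisely as (b) and (c) are stated.
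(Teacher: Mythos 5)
Your proof is correct and is exactly the routine verification the paper has in mind (the paper omits the proof, introducing the theorem with ``By definition, we can easily derive...''). The forward direction by substitution and the converse by trilinear expansion into eight summands, using the cyclic invariance of $C(x,y,z)$ to collapse each mixed case to the single identity recorded in (b) or (c), is precisely the intended argument.
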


\begin{cor}\label{2-cor-decom-abelian}
With notation as in Theorem \ref{2-thm-abc}, assume that the ideal $\mathfrak{i}$ is Abelian. Then
a metric $\langle\cdot,\cdot\rangle$ on $\mathfrak{g}=\mathfrak{h}+\mathfrak{i}$ is cyclic if and only if the following conditions hold.

(a)  $(\mathfrak{h},\langle\cdot,\cdot\rangle|_{\mathfrak{h}})$ is cyclic.

(b) $\langle[i_{2},h],i_{1}\rangle+\langle[h,i_{1}],i_{2}\rangle=0$
for all $i_{1},i_{2}\in \mathfrak{i}$, $h\in \mathfrak{h}$.

(c) $\langle[h_{1},h_{2}],i\rangle+\langle[h_{2},i],h_{1}\rangle+\langle[i,h_{1}],h_{2}\rangle=0$
for all $h_{1},h_{2}\in \mathfrak{h}$, $i\in \mathfrak{i}$.
\end{cor}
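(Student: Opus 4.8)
The plan is to derive this directly from Theorem~\ref{2-thm-abc} by specializing each of its three conditions to the case $[\mathfrak{i},\mathfrak{i}]=0$. The hypotheses of Theorem~\ref{2-thm-abc}---that $\mathfrak{h}$ is a subalgebra, $\mathfrak{i}$ an ideal, and $\mathfrak{g}=\mathfrak{h}+\mathfrak{i}$ a vector-space direct sum---carry over unchanged, so the only task is to rewrite conditions (a), (b), (c) under the extra assumption that $\mathfrak{i}$ is Abelian, and then invoke the biconditional of Theorem~\ref{2-thm-abc}.

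First I would treat condition (a) of Theorem~\ref{2-thm-abc}, which asks that both $(\mathfrak{h},\langle\cdot,\cdot\rangle|_{\mathfrak{h}})$ and $(\mathfrak{i},\langle\cdot,\cdot\rangle|_{\mathfrak{i}})$ be cyclic. Because $\mathfrak{i}$ is Abelian, every bracket $[i_{1},i_{2}]$ with $i_{1},i_{2}\in\mathfrak{i}$ vanishes, so the cyclic cocycle identity evaluated on three elements of $\mathfrak{i}$ reduces to $0+0+0=0$ and holds automatically. Hence cyclicity of $(\mathfrak{i},\langle\cdot,\cdot\rangle|_{\mathfrak{i}})$ imposes no constraint, and condition (a) collapses to the single requirement that $(\mathfrak{h},\langle\cdot,\cdot\rangle|_{\mathfrak{h}})$ be cyclic, which is exactly corollary~(a).

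Next I would simplify condition (b) of Theorem~\ref{2-thm-abc}, namely $\langle[i_{1},i_{2}],h\rangle+\langle[i_{2},h],i_{1}\rangle+\langle[h,i_{1}],i_{2}\rangle=0$. The first summand contains $[i_{1},i_{2}]=0$ and therefore drops out, leaving precisely $\langle[i_{2},h],i_{1}\rangle+\langle[h,i_{1}],i_{2}\rangle=0$, which is corollary~(b). Condition (c) of Theorem~\ref{2-thm-abc} involves only the brackets $[h_{1},h_{2}]\in\mathfrak{h}$, $[h_{2},i]$, and $[i,h_{1}]$---each pairing an $\mathfrak{h}$-element with an $\mathfrak{i}$-element and never two elements of $\mathfrak{i}$---so it is untouched by the Abelian hypothesis and is carried over verbatim as corollary~(c).

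Since each of the three conditions of Theorem~\ref{2-thm-abc} is equivalent, under $[\mathfrak{i},\mathfrak{i}]=0$, to the correspondingly labeled condition of the corollary, the two lists of conditions are equivalent and the biconditional transfers. I expect no genuine obstacle here; the only point requiring a moment's care is confirming that cyclicity of $(\mathfrak{i},\langle\cdot,\cdot\rangle|_{\mathfrak{i}})$ becomes \emph{vacuous} rather than merely weaker, which follows at once from the vanishing of all brackets inside $\mathfrak{i}$.
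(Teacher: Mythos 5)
Your proposal is correct and is exactly the argument the paper intends (the paper leaves the corollary unproved, as an immediate specialization of Theorem \ref{2-thm-abc} to the case $[\mathfrak{i},\mathfrak{i}]=0$). Each reduction — cyclicity of $(\mathfrak{i},\langle\cdot,\cdot\rangle|_{\mathfrak{i}})$ becoming vacuous, the first term of condition (b) dropping, and condition (c) carrying over verbatim — is right.
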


We now state some useful lemmas.
\begin{lem}\label{2-lem-subalgebra}
Let $(\mathfrak{g},\mathbf{B})$ be a cyclic metric Lie algebra and $\mathfrak{h}$ be a Lie subalgebra of $\mathfrak{g}$.
 Then $(\mathfrak{h},\mathbf{B}|_{\mathfrak{h}})$ is cyclic.
\end{lem}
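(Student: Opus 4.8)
The plan is to observe that the cyclic cocycle identity is purely pointwise: it only involves three chosen arguments together with the bracket and the form evaluated on them. Since $\mathfrak{h}$ is closed under the bracket of $\mathfrak{g}$, the strategy is simply to restrict the defining identity of $\mathbf{B}$ to triples drawn from $\mathfrak{h}$ and to verify that every term appearing is already controlled by the structure on $\mathfrak{h}$.

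First I would note that $\mathbf{B}|_{\mathfrak{h}}$ is again a symmetric bilinear form, since the restriction of a symmetric bilinear form to any subspace remains symmetric and bilinear. Next, for arbitrary $x,y,z\in\mathfrak{h}$, the subalgebra property guarantees $[x,y],[y,z],[z,x]\in\mathfrak{h}$, so the Lie bracket of $\mathfrak{h}$ coincides with that of $\mathfrak{g}$ on these elements, and $\mathbf{B}|_{\mathfrak{h}}$ agrees with $\mathbf{B}$ on all the relevant pairs. The verification then reduces to the single line
\begin{align*}
&\mathbf{B}|_{\mathfrak{h}}([x,y],z)+\mathbf{B}|_{\mathfrak{h}}([y,z],x)+\mathbf{B}|_{\mathfrak{h}}([z,x],y)\\
&\qquad=\mathbf{B}([x,y],z)+\mathbf{B}([y,z],x)+\mathbf{B}([z,x],y)=0,
\end{align*}
where the last equality is precisely the cyclic identity for $\mathbf{B}$ on $\mathfrak{g}$. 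This yields the cyclic identity for $\mathbf{B}|_{\mathfrak{h}}$.

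There is no genuine obstacle here, as the content is entirely formal; the result is really an instance of the more refined decomposition criterion in Theorem \ref{2-thm-abc}, specialized to a subalgebra. The only point requiring attention is the explicit use of bracket closure of $\mathfrak{h}$: it is exactly this that forces the three bracket terms back into $\mathfrak{h}$, so that the restricted form may be evaluated on them and identified with the ambient form $\mathbf{B}$.
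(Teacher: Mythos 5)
Your proof is correct and is exactly the routine restriction argument the paper has in mind; indeed, the paper states this lemma without proof, treating it as immediate from the fact that the cyclic identity only involves triples and their brackets, all of which stay inside the subalgebra. Nothing is missing.
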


\begin{lem}\label{2-lem-decomposition}
Let $(\mathfrak{g},\mathbf{B})$ be a cyclic metric Lie algebra. Then for every  Lie subalgebra $\mathfrak{h}$ of $\mathfrak{g}$ and vector space $V\subset \mathfrak{g}$ satisfying $[\mathfrak{h},V]=0$, we have
$\mathbf{B}([\mathfrak{h},\mathfrak{h}],V)=0$.
In particular, if in addition that $\mathfrak{h}$ is semisimple, then  $\mathbf{B}(\mathfrak{h},V)=0$.
\end{lem}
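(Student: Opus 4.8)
The plan is to apply the defining cyclic cocycle identity directly to a carefully chosen triple of elements. Take $x,y\in\mathfrak{h}$ and $z\in V$, and write out the cyclic identity
\begin{equation*}
\mathbf{B}([x,y],z)+\mathbf{B}([y,z],x)+\mathbf{B}([z,x],y)=0.
\end{equation*}
The point is that the hypothesis $[\mathfrak{h},V]=0$ kills the last two terms: since $y,x\in\mathfrak{h}$ and $z\in V$, we have $[y,z]\in[\mathfrak{h},V]=0$ and likewise $[z,x]=-[x,z]\in[\mathfrak{h},V]=0$. Hence the identity collapses to $\mathbf{B}([x,y],z)=0$.

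Next I would let $x,y$ range over all of $\mathfrak{h}$ and $z$ over all of $V$. Since the brackets $[x,y]$ span the derived subalgebra $[\mathfrak{h},\mathfrak{h}]$ and $\mathbf{B}$ is bilinear, the vanishing just obtained propagates to give $\mathbf{B}([\mathfrak{h},\mathfrak{h}],V)=0$, which is the first assertion.

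For the ``in particular'' statement, I would invoke the standard fact that a semisimple Lie algebra is perfect, so that $[\mathfrak{h},\mathfrak{h}]=\mathfrak{h}$ when $\mathfrak{h}$ is semisimple. Substituting this into the identity $\mathbf{B}([\mathfrak{h},\mathfrak{h}],V)=0$ yields $\mathbf{B}(\mathfrak{h},V)=0$ at once.

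There is essentially no obstacle in this argument: the entire content is a single substitution into the cyclic identity combined with the elementary observation that two of the three bracket terms vanish under the hypothesis $[\mathfrak{h},V]=0$. The only input beyond the definition of a cyclic metric is the perfectness of semisimple Lie algebras, which is classical and requires no further justification here.
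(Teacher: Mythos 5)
Your proof is correct and follows exactly the same route as the paper: substitute $x,y\in\mathfrak{h}$, $z\in V$ into the cyclic identity, observe that the hypothesis $[\mathfrak{h},V]=0$ annihilates two of the three terms, and conclude with the perfectness of semisimple Lie algebras. Nothing is missing.
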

\begin{proof}
For all $x,y\in\mathfrak{h}$ and $v\in V$, we have
\begin{eqnarray*}
0&=&\mathbf{B}([x,y],v)+\mathbf{B}([y,v],x)+\mathbf{B}([v,x],y)\\
 &=&\mathbf{B}([x,y],v),
\end{eqnarray*}
since $[y,v]=[v,x]=0$.
This proves the lemma.
\end{proof}

The following corollaries follow easily from Lemma \ref{2-lem-decomposition}.
\begin{cor}\label{2-cor-isotropic}
Let $(\mathfrak{g}, \mathbf{B})$ be a cyclic metric Lie algebra.
Then for every subalgebra $\mathfrak{h}$ of $\mathfrak{g}$,
$\mathbf{B}([\mathfrak{h},\mathfrak{h}],C(\mathfrak{h}))=0$, where $C(\mathfrak{h})$ denotes the center of $\mathfrak{h}$. Hence the space $C(\mathfrak{h})\cap [\mathfrak{h},\mathfrak{h}]$ is isotropic.
\end{cor}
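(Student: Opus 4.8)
The plan is to obtain this as a direct specialization of Lemma \ref{2-lem-decomposition}, taking the subalgebra in that lemma to be $\mathfrak{h}$ itself and the vector space $V$ to be the center $C(\mathfrak{h})$. First I would check that this choice of $V$ is admissible: since $\mathfrak{h}\subset\mathfrak{g}$, its center $C(\mathfrak{h})$ is in particular a vector subspace of $\mathfrak{g}$, and by the very definition of the center we have $[\mathfrak{h},C(\mathfrak{h})]=0$. Thus the hypotheses of Lemma \ref{2-lem-decomposition} are satisfied with this $V$, and the lemma immediately yields $\mathbf{B}([\mathfrak{h},\mathfrak{h}],C(\mathfrak{h}))=0$, which is the first assertion.

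For the isotropy statement, I would set $W=C(\mathfrak{h})\cap[\mathfrak{h},\mathfrak{h}]$ and verify $W\subset W^{\perp}$ directly from the definition of isotropy. Given any $w_{1},w_{2}\in W$, one has simultaneously $w_{1}\in[\mathfrak{h},\mathfrak{h}]$ and $w_{2}\in C(\mathfrak{h})$; the first assertion then forces $\mathbf{B}(w_{1},w_{2})=0$. Since $w_{1},w_{2}\in W$ were arbitrary, this gives $\mathbf{B}(W,W)=0$, i.e. $W$ is contained in its own orthogonal complement, which is precisely the defining condition for $W$ to be isotropic.

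I do not expect any genuine obstacle here: the corollary is an immediate consequence of Lemma \ref{2-lem-decomposition} once one recognizes that the center $C(\mathfrak{h})$ is exactly the kind of subspace $V$ the lemma was designed to handle (namely one annihilated by the bracket with $\mathfrak{h}$), and the isotropy is a one-line reformulation using that both factors of an element of $W$ lie in the two slots made orthogonal by the first part. The only point requiring any care is the (trivial) remark that $C(\mathfrak{h})$ need only be a subspace of $\mathfrak{g}$, not an ideal or subalgebra, for the lemma to apply.
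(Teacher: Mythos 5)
Your proposal is correct and matches the paper's (implicit) argument exactly: the paper derives this corollary directly from Lemma \ref{2-lem-decomposition} by taking $V=C(\mathfrak{h})$, which satisfies $[\mathfrak{h},V]=0$ by definition of the center, and the isotropy of $C(\mathfrak{h})\cap[\mathfrak{h},\mathfrak{h}]$ follows just as you describe. No issues.
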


Corollary \ref{2-cor-isotropic} asserts that every $n$-dimensional non-Abelian nilpotent Lie algebra endowed with a non-degenerate cyclic metric must be a one-dimensional central extension of an $(n-2)$-dimensional nilpotent
cyclic metric Lie algebra.
\begin{cor}\label{2-cor-twodirectsum}
Let $(\mathfrak{g}, \mathbf{B})$ be a cyclic metric Lie algebra.
Assume that $\mathfrak{g}=\mathfrak{g}_{1}\oplus\mathfrak{g}_{2}$ is a direct sum of Lie algebras $\mathfrak{g}_{1}$ and $\mathfrak{g}_{2}$, where $\mathfrak{g}_{1}$ is semisimple, then
$\mathbf{B}(\mathfrak{g}_{1},\mathfrak{g}_{2})=0$ and consequently $(\mathfrak{g}, \mathbf{B})$ is an orthogonal direct product of $(\mathfrak{g}_{1},\mathbf{B}|_{\mathfrak{g}_{1}})$ and
$(\mathfrak{g}_{2},\mathbf{B}|_{\mathfrak{g}_{2}})$.
\end{cor}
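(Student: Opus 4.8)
The plan is to deduce the result directly from Lemma \ref{2-lem-decomposition}, once we record the elementary fact that a decomposition $\mathfrak{g}=\mathfrak{g}_{1}\oplus\mathfrak{g}_{2}$ into ideals forces the two summands to commute. Since $\mathfrak{g}_{1}$ and $\mathfrak{g}_{2}$ are both ideals with $\mathfrak{g}_{1}\cap\mathfrak{g}_{2}=\{0\}$, the bracket $[\mathfrak{g}_{1},\mathfrak{g}_{2}]$ is contained in $\mathfrak{g}_{1}$ (because $\mathfrak{g}_{1}$ is an ideal) and also in $\mathfrak{g}_{2}$ (because $\mathfrak{g}_{2}$ is an ideal), hence lies in $\mathfrak{g}_{1}\cap\mathfrak{g}_{2}$ and vanishes. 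This is precisely the hypothesis $[\mathfrak{h},V]=0$ needed to invoke the lemma.

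Next, I would take $\mathfrak{h}=\mathfrak{g}_{1}$ and $V=\mathfrak{g}_{2}$ in Lemma \ref{2-lem-decomposition}. Since $\mathfrak{g}_{1}$ is semisimple, the ``in particular'' clause of that lemma yields $\mathbf{B}(\mathfrak{g}_{1},\mathfrak{g}_{2})=0$ at once. Concretely, semisimplicity gives the perfectness $[\mathfrak{g}_{1},\mathfrak{g}_{1}]=\mathfrak{g}_{1}$, so that $\mathbf{B}(\mathfrak{g}_{1},\mathfrak{g}_{2})=\mathbf{B}([\mathfrak{g}_{1},\mathfrak{g}_{1}],\mathfrak{g}_{2})=0$ by the first assertion of the lemma applied to the commuting pair $(\mathfrak{g}_{1},\mathfrak{g}_{2})$.

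Finally, the orthogonal direct product claim is immediate from the definition recalled at the start of Section \ref{sec2}: we already have $\mathfrak{g}=\mathfrak{g}_{1}\oplus\mathfrak{g}_{2}$ as a direct sum of two ideals, and the vanishing $\mathbf{B}(\mathfrak{g}_{1},\mathfrak{g}_{2})=0$ supplies the required orthogonality. There is no genuine obstacle here; the only point worth isolating is the passage from ``direct sum of Lie algebras'' to the commuting condition $[\mathfrak{g}_{1},\mathfrak{g}_{2}]=0$, which is exactly what makes Lemma \ref{2-lem-decomposition} applicable, the semisimplicity of $\mathfrak{g}_{1}$ then doing the rest.
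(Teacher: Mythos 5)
Your proposal is correct and follows exactly the route the paper intends: the corollary is stated as an easy consequence of Lemma \ref{2-lem-decomposition}, applied with $\mathfrak{h}=\mathfrak{g}_{1}$ and $V=\mathfrak{g}_{2}$ after noting that the direct sum of ideals forces $[\mathfrak{g}_{1},\mathfrak{g}_{2}]=0$. Your explicit remark that perfectness $[\mathfrak{g}_{1},\mathfrak{g}_{1}]=\mathfrak{g}_{1}$ is what the ``in particular'' clause relies on is a nice touch, but the argument is the same one the paper has in mind.
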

\begin{cor}\label{2-cor-reductive}
Every reductive cyclic metric Lie algebra is an orthogonal direct product of simple cyclic metric Lie algebras and the Abelian part.
\end{cor}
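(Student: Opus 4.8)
The plan is to deduce the statement from iterated applications of Corollary \ref{2-cor-twodirectsum}. First recall the structure of a reductive Lie algebra: by definition its radical coincides with its center, so $\mathfrak{g}$ splits as a direct sum of ideals $\mathfrak{g}=\mathfrak{s}\oplus\mathfrak{z}$, where $\mathfrak{s}=[\mathfrak{g},\mathfrak{g}]$ is semisimple and $\mathfrak{z}=C(\mathfrak{g})$ is the abelian part. Applying Corollary \ref{2-cor-twodirectsum} to this decomposition, with $\mathfrak{g}_{1}=\mathfrak{s}$ semisimple and $\mathfrak{g}_{2}=\mathfrak{z}$, immediately yields $\mathbf{B}(\mathfrak{s},\mathfrak{z})=0$ and exhibits $(\mathfrak{g},\mathbf{B})$ as the orthogonal direct product of $(\mathfrak{s},\mathbf{B}|_{\mathfrak{s}})$ and $(\mathfrak{z},\mathbf{B}|_{\mathfrak{z}})$.

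It remains to split the semisimple part further. By Lemma \ref{2-lem-subalgebra}, $\mathbf{B}|_{\mathfrak{s}}$ is again a cyclic metric on $\mathfrak{s}$. Decompose $\mathfrak{s}=\mathfrak{s}_{1}\oplus\cdots\oplus\mathfrak{s}_{k}$ into its simple ideals. For each fixed index $i$, the decomposition $\mathfrak{s}=\mathfrak{s}_{i}\oplus\bigl(\bigoplus_{l\neq i}\mathfrak{s}_{l}\bigr)$ is a direct sum of Lie algebras whose first factor $\mathfrak{s}_{i}$ is simple, hence semisimple. Applying Corollary \ref{2-cor-twodirectsum} once more gives $\mathbf{B}\bigl(\mathfrak{s}_{i},\bigoplus_{l\neq i}\mathfrak{s}_{l}\bigr)=0$, and in particular $\mathbf{B}(\mathfrak{s}_{i},\mathfrak{s}_{j})=0$ for every $j\neq i$. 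Since each $\mathfrak{s}_{i}$ is itself a simple cyclic metric Lie algebra (again by Lemma \ref{2-lem-subalgebra}), this shows that $(\mathfrak{s},\mathbf{B}|_{\mathfrak{s}})$ is the orthogonal direct product of the simple cyclic metric Lie algebras $(\mathfrak{s}_{i},\mathbf{B}|_{\mathfrak{s}_{i}})$.

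Combining the two steps, $(\mathfrak{g},\mathbf{B})$ is the orthogonal direct product of the simple factors $(\mathfrak{s}_{1},\mathbf{B}|_{\mathfrak{s}_{1}}),\dots,(\mathfrak{s}_{k},\mathbf{B}|_{\mathfrak{s}_{k}})$ together with the abelian part $(\mathfrak{z},\mathbf{B}|_{\mathfrak{z}})$, as claimed. I do not expect a genuine obstacle here: the whole argument is a bookkeeping exercise resting on Corollary \ref{2-cor-twodirectsum}. The only points that deserve attention are, first, to confirm before re-invoking that corollary inside $\mathfrak{s}$ that the restricted form $\mathbf{B}|_{\mathfrak{s}}$ is still cyclic, which is guaranteed by Lemma \ref{2-lem-subalgebra}, and second, to observe that pairwise orthogonality of all the factors, together with $\mathfrak{g}$ being their direct sum as ideals, is precisely the definition of an iterated orthogonal direct product.
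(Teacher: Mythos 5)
Your argument is correct and is essentially the proof the paper intends: the corollary is stated as an immediate consequence of Corollary \ref{2-cor-twodirectsum}, and your iterated application of that corollary (first splitting off the center, then peeling off the simple ideals one at a time, with Lemma \ref{2-lem-subalgebra} guaranteeing the restricted form stays cyclic) is exactly the bookkeeping the authors leave implicit.
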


\begin{prop}
Let $(\mathfrak{g}, \mathbf{B})$ be a cyclic metric Lie algebra.
If  $\mathfrak{h}$ is an ideal of $\mathfrak{g}$,
then  $\mathfrak{h}^{\perp}$ is a Lie subalgebra of $\mathfrak{g}$ and
$\dim (\mathfrak{h}\cap \mathfrak{h}^{\perp})\leq \mathrm{Ind}(\mathfrak{g},\mathbf{B})$.
In particular, $\dim \mathfrak{g}^{\perp}\leq \mathrm{Ind}(\mathfrak{g},\mathbf{B})$.
\end{prop}
\begin{proof}
 For all $x,y\in\mathfrak{h}^{\perp}$ and  $z\in\mathfrak{h}$, we have $[x,z],[y,z]\in \mathfrak{h}$.
  Hence
\begin{equation*}
\mathbf{B}([x,y],z)=-\mathbf{B}([y,z],x)-\mathbf{B}([z,x],y)=0,
\end{equation*}
which implies  $[x,y]\in \mathfrak{h}^{\perp}$,  so $\mathfrak{h}^{\perp}$ is a Lie  subalgebra of $\mathfrak{g}$. The inequality follows from the definition of the index.
\end{proof}

From condition (a) of Theorem \ref{2-thm-abc}, we make the following definition.
\begin{defn}
A cyclic metric Lie algebra $(\mathfrak{g},\mathbf{B})$ is called indecomposable if it does not admit any non-degenerate proper ideal. Otherwise, it is called decomposable.
\end{defn}
By Corollary \ref{2-cor-isotropic}, we obtain
\begin{prop}
Let $(\mathfrak{g},\mathbf{B})$ be an indecomposable cyclic metric Lie algebra. Then $C(\mathfrak{g})$ is isotropic and
$\dim C(\mathfrak{g})\leq \mathrm{Ind}(\mathfrak{g},\mathbf{B})$.
\end{prop}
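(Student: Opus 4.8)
The plan is to reduce the whole statement to the single assertion that $C(\mathfrak{g})$ is isotropic, since once this is established the inequality $\dim C(\mathfrak{g})\le\mathrm{Ind}(\mathfrak{g},\mathbf{B})$ is immediate from the definition of the index as the maximal dimension of an isotropic subspace. To analyze $C(\mathfrak{g})$, I would first invoke Corollary \ref{2-cor-isotropic} with $\mathfrak{h}=\mathfrak{g}$, giving $\mathbf{B}([\mathfrak{g},\mathfrak{g}],C(\mathfrak{g}))=0$; in particular the part $C(\mathfrak{g})\cap[\mathfrak{g},\mathfrak{g}]$ of the center is already isotropic. Thus the only possible obstruction to isotropy of $C(\mathfrak{g})$ lives in a complement of $C(\mathfrak{g})\cap[\mathfrak{g},\mathfrak{g}]$, and the extra ingredient needed to remove it should be indecomposability.

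The core step is a contradiction argument. Suppose $\mathbf{B}|_{C(\mathfrak{g})}$ is not identically zero, that is, $C(\mathfrak{g})$ is not isotropic. Let $R=C(\mathfrak{g})\cap C(\mathfrak{g})^{\perp}$ be the radical of the restricted form $\mathbf{B}|_{C(\mathfrak{g})}$ and choose a complement $U$ with $C(\mathfrak{g})=R\oplus U$; by construction $\mathbf{B}|_{U}$ is non-degenerate and $U\neq\{0\}$. I would then verify two points. First, $U$ is an ideal of $\mathfrak{g}$, which is automatic because $U\subset C(\mathfrak{g})$ forces $[\mathfrak{g},U]\subset[\mathfrak{g},C(\mathfrak{g})]=0\subset U$. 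Second, $U$ is non-degenerate as a subspace of $\mathfrak{g}$, i.e. $U\cap U^{\perp}=\{0\}$: any $u\in U\cap U^{\perp}$ satisfies $\mathbf{B}(u,U)=0$ and therefore vanishes by non-degeneracy of $\mathbf{B}|_{U}$. Hence $U$ is a non-degenerate ideal, and provided it is proper it contradicts the indecomposability of $(\mathfrak{g},\mathbf{B})$, forcing $\mathbf{B}|_{C(\mathfrak{g})}=0$.

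The hard part will be ensuring properness of $U$ and disposing of the Abelian boundary case. If $\mathfrak{g}$ is non-Abelian, then $C(\mathfrak{g})\subsetneq\mathfrak{g}$, so $U\subseteq C(\mathfrak{g})$ is automatically a proper ideal and the contradiction closes the argument. The delicate situation is when $\mathfrak{g}$ is Abelian, where $C(\mathfrak{g})=\mathfrak{g}$ and the cyclic condition is vacuous; here I would argue that over $\mathbb{F}$ (characteristic $\neq2$) a non-zero symmetric form admits an anisotropic vector $v$ (otherwise polarization forces $\mathbf{B}=0$), so the one-dimensional subspace spanned by $v$ is a non-degenerate proper ideal whenever $\dim\mathfrak{g}\ge 2$, again contradicting indecomposability and leaving only the trivial low-dimensional possibility. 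Having shown that $C(\mathfrak{g})$ is isotropic, the dimension bound then follows at once from the definition of $\mathrm{Ind}(\mathfrak{g},\mathbf{B})$, completing the proof.
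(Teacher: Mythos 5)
Your argument is correct and is essentially the one the paper intends: the paper states this proposition with no written proof beyond citing Corollary \ref{2-cor-isotropic}, and the standard way to upgrade isotropy of $C(\mathfrak{g})\cap[\mathfrak{g},\mathfrak{g}]$ to isotropy of all of $C(\mathfrak{g})$ is exactly your observation that a complement $U$ of the radical of $\mathbf{B}|_{C(\mathfrak{g})}$ is a central, hence ideal, non-degenerate subspace, which indecomposability forces to vanish once it is proper. The only loose end is the one you yourself flag in the Abelian case: for a one-dimensional Abelian $\mathfrak{g}$ with $\mathbf{B}\neq 0$ the statement as written actually fails (such an algebra is indecomposable yet its center is anisotropic), so that boundary case is a defect of the proposition's formulation rather than of your proof.
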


We now state a key lemma, which will be used frequently in this paper.
\begin{lem}\label{2-prop-semisimple}
Let $\mathfrak{g}$ be a  Lie algebra
and $(V,\mathbf{K})$ be a vector space  endowed with a symmetric bilinear
 form $\mathbf{K}:V\times V\rightarrow \mathbb{F}$.
Assume that $\pi:\mathfrak{g}\rightarrow \mathfrak{gl}(V,\mathbb{F})$ is a representation of $\mathfrak{g}$ such that for each $x\in\mathfrak{g}$, $\pi(x):V\rightarrow V$ is symmetric with respect to $\mathbf{K}$,
 namely, $\mathbf{K}(\pi(x)(u),v)=\mathbf{K}(u,\pi(x)(v))$ holds for all $u,v\in V$.
 Then $\mathbf{K}(\pi([x,y])(u),v)=0$ for all $x,y\in \mathfrak{g}$ and $u,v\in V$.
 In particular, if $\mathfrak{g}$ is semisimple, then $\mathbf{K}=0$ when $\pi$ has no trivial subrepresentation,
 and  $\pi=0$ when $\mathbf{K}$ is non-degenerate.
\end{lem}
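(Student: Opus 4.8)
The plan is to handle the three assertions in turn, with the first one carrying essentially all the content. For the first claim, I would fix $x,y\in\mathfrak{g}$ and set $A:=\pi([x,y])$. Since $\pi$ is a Lie algebra homomorphism, $A=\pi(x)\pi(y)-\pi(y)\pi(x)$. The operator $A$ enjoys two competing properties with respect to $\mathbf{K}$. First, because $[x,y]\in\mathfrak{g}$, the hypothesis applied to the single element $[x,y]$ says $A$ is $\mathbf{K}$-symmetric, i.e. $\mathbf{K}(Au,v)=\mathbf{K}(u,Av)$ for all $u,v\in V$. Second, expanding the commutator and sliding each of $\pi(x)$ and $\pi(y)$ across $\mathbf{K}$ using their individual symmetry yields $\mathbf{K}(Au,v)=\mathbf{K}(u,\pi(y)\pi(x)v)-\mathbf{K}(u,\pi(x)\pi(y)v)=-\mathbf{K}(u,Av)$, so $A$ is also $\mathbf{K}$-skew. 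Adding the two identities gives $2\,\mathbf{K}(Au,v)=0$, and since $\mathrm{char}\,\mathbb{F}\neq 2$ we conclude $\mathbf{K}(\pi([x,y])(u),v)=0$, which is the first assertion.

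Next I would record the immediate consequence when $\mathfrak{g}$ is semisimple. Writing $\mathfrak{R}:=\{w\in V\mid \mathbf{K}(w,V)=0\}$ for the radical of $\mathbf{K}$, the first assertion says $\pi([x,y])(u)\in\mathfrak{R}$ for all $u$. Since a semisimple $\mathfrak{g}$ satisfies $\mathfrak{g}=[\mathfrak{g},\mathfrak{g}]$, every $z\in\mathfrak{g}$ is a sum of brackets, whence $\pi(z)(V)\subset\mathfrak{R}$; that is, $\pi(\mathfrak{g})V\subset\mathfrak{R}$, and in particular $\mathfrak{R}$ is a $\mathfrak{g}$-submodule of $V$. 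The two special cases then drop out. If $\mathbf{K}$ is non-degenerate then $\mathfrak{R}=0$, so $\pi(\mathfrak{g})V=0$ and $\pi=0$. If instead $\pi$ has no trivial subrepresentation, I would invoke complete reducibility (available for finite-dimensional representations of a semisimple Lie algebra under the standing assumptions) to split $V=\mathfrak{R}\oplus U$ with $U$ a $\mathfrak{g}$-submodule; then $\pi(\mathfrak{g})U\subset\pi(\mathfrak{g})V\subset\mathfrak{R}$ while $\pi(\mathfrak{g})U\subset U$ by invariance, so $\pi(\mathfrak{g})U\subset\mathfrak{R}\cap U=0$. Thus $U$ is a trivial subrepresentation and must vanish, forcing $V=\mathfrak{R}$ and hence $\mathbf{K}=0$.

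The only genuinely delicate point is the symmetric-and-skew cancellation in the first step; this is precisely where the symmetry of $\mathbf{K}$, the $\mathbf{K}$-symmetry of each $\pi(z)$, and the hypothesis $\mathrm{char}\,\mathbb{F}\neq 2$ all conspire, and it is worth isolating so that the remaining deductions become purely formal. Everything after the first assertion is a routine consequence of $\mathfrak{g}=[\mathfrak{g},\mathfrak{g}]$ together with complete reducibility, so I do not anticipate any further obstacle beyond verifying that $V$ is finite-dimensional so that Weyl-type complete reducibility applies.
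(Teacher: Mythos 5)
Your proof is correct and follows essentially the same route as the paper: the paper's own chain of equalities is exactly your "$\pi([x,y])$ is simultaneously $\mathbf{K}$-symmetric and $\mathbf{K}$-skew" computation (using $\mathrm{char}\,\mathbb{F}\neq 2$ at the end), and your deduction of the two special cases via $\mathfrak{g}=[\mathfrak{g},\mathfrak{g}]$, the radical of $\mathbf{K}$, and complete reducibility is just a careful write-up of what the paper dismisses as "the remaining claims follow immediately."
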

\begin{proof}
For all $x,y\in\mathfrak{g}$, $u,v\in V$, we have
\begin{eqnarray*}
\mathbf{K}(\pi([x,y])(u),v)&=&\mathbf{K}([\pi(x)\pi(y)-\pi(y)\pi(x)](u),v)\\
          &=&\mathbf{K}(\pi(x)\pi(y)(u),v)-\mathbf{K}(\pi(y)\pi(x)(u),v)\\
          &=& \mathbf{K}(u,\pi(y)\pi(x)(v))-\mathbf{K}(u,\pi(x)\pi(y)(v))\\
          &=&\mathbf{K}(u,[\pi(y),\pi(x)](v))\\
          &=&-\mathbf{K}(\pi([x,y])(u),v),
\end{eqnarray*}
so $\mathbf{K}(\pi([x,y])(u),v)=0$. The remaining claims follow immediately.
\end{proof}
\begin{cor}\label{2-cor-abelian}
Let $(\mathfrak{g},\mathbf{B})$ be a cyclic metric Lie algebra.
If $\mathfrak{h}$ is an Abelian ideal of $\mathfrak{g}$, then
$\mathbf{B}([[\mathfrak{g},\mathfrak{g}],\mathfrak{h}],\mathfrak{h})=0.$
\end{cor}
\begin{proof}
Define the representation  $\pi:\mathfrak{g}\rightarrow \mathfrak{gl}(\mathfrak{h},\mathbb{F})$
 by $\pi(x)(y)=[x,y]$ for all $x\in \mathfrak{g}, y\in \mathfrak{h}$.
Since $\mathfrak{h}$ is Abelian, then for each $x\in \mathfrak{g}$, $\pi(x)$ is symmetric with respect to
$\mathbf{B}|_{\mathfrak{h}}$. The result follows from Lemma \ref{2-prop-semisimple}.
\end{proof}

\section{Decomposable cyclic metric Lie algebras}\label{sec3}
In this section, we study decomposable cyclic metric Lie algebras.
Let $(\mathfrak{g}_{1},[\cdot,\cdot]_{1},\mathbf{B}_{1})$
 and $(\mathfrak{g}_{2},[\cdot,\cdot]_{2},\mathbf{B}_{2})$ be two cyclic metric Lie algebras.
 Suppose that $\pi: \mathfrak{g}_{1}\rightarrow \mathrm{Der}(\mathfrak{g}_{2})$ is a Lie algebra homomorphism,
 and define the Lie algebra $\mathfrak{g}_{1}+_{\pi}\mathfrak{g}_{2}$ by
\begin{equation*}
[(x_{1},x_{2}),(y_{1},y_{2})]=([x_{1},y_{1}]_{1},[x_{2},y_{2}]_{2}+\pi(x_{1})(y_{2})-\pi(y_{1})(x_{2})),
\end{equation*}
for all $x_{1},y_{1}\in\mathfrak{g}_{1}$, $x_{2},y_{2}\in\mathfrak{g}_{2}$. Moreover, define the metric $\langle\cdot,\cdot\rangle=\mathbf{B}_{1}+\mathbf{B}_{2}$ on $\mathfrak{g}_{1}+_{\pi}\mathfrak{g}_{2}$ by
\begin{equation*}
\langle (x_{1},x_{2}),(y_{1},y_{2})\rangle=\mathbf{B}_{1}(x_{1},y_{1})+\mathbf{B}_{2}(x_{2},y_{2}), \forall x_{1},y_{1}\in\mathfrak{g}_{1}, x_{2},y_{2}\in\mathfrak{g}_{2}.
\end{equation*}
The resulting metric Lie algebra $(\mathfrak{g},\langle\cdot,\cdot\rangle)
=(\mathfrak{g}_{1},\mathbf{B}_{1})
+_{\pi}(\mathfrak{g}_{2},\mathbf{B}_{2})$ is called an orthogonal
semidirect product of $(\mathfrak{g}_{1},\mathbf{B}_{1})$ and
$(\mathfrak{g}_{2},\mathbf{B}_{2})$.
In particular, if  $\pi=0$,
then $(\mathfrak{g},\langle\cdot,\cdot\rangle)
=(\mathfrak{g}_{1},\mathbf{B}_{1})
\oplus(\mathfrak{g}_{2},\mathbf{B}_{2})$ is the orthogonal
direct product of $(\mathfrak{g}_{1},\mathbf{B}_{1})$  and $(\mathfrak{g}_{2},\mathbf{B}_{2})$.
\begin{prop}[\cite{ggo15}]\label{2-prop-pi}
The metric Lie algebra $(\mathfrak{g}_{1}+_{\pi}\mathfrak{g}_{2}, \langle\cdot,\cdot\rangle)$ is cyclic if and only if for each $x\in\mathfrak{g}_{1}$, the derivation $\pi(x)$ on $\mathfrak{g}_{2}$ is symmetric with respect to $\mathbf{B}_{2}$.
\end{prop}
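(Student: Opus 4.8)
The plan is to deduce the statement directly from the decomposition criterion of Theorem \ref{2-thm-abc}. First I would identify $\mathfrak{g}_{1}$ with the subalgebra $\mathfrak{h}=\mathfrak{g}_{1}\times\{0\}$ and $\mathfrak{g}_{2}$ with the ideal $\mathfrak{i}=\{0\}\times\mathfrak{g}_{2}$ of $\mathfrak{g}=\mathfrak{g}_{1}+_{\pi}\mathfrak{g}_{2}$, so that $\mathfrak{g}=\mathfrak{h}+\mathfrak{i}$ is a vector-space direct sum with $\mathfrak{h}$ a subalgebra and $\mathfrak{i}$ an ideal. By the very definition of $\langle\cdot,\cdot\rangle=\mathbf{B}_{1}+\mathbf{B}_{2}$ we have $\langle\cdot,\cdot\rangle|_{\mathfrak{h}}=\mathbf{B}_{1}$, $\langle\cdot,\cdot\rangle|_{\mathfrak{i}}=\mathbf{B}_{2}$ and, crucially, $\langle\mathfrak{h},\mathfrak{i}\rangle=0$. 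Theorem \ref{2-thm-abc} then asserts that $\langle\cdot,\cdot\rangle$ is cyclic if and only if its three conditions (a), (b), (c) hold, and I would verify each in turn.

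Condition (a) is immediate, since $\mathbf{B}_{1}$ and $\mathbf{B}_{2}$ are cyclic by hypothesis. Condition (c) is automatic as well: for $h_{1},h_{2}\in\mathfrak{h}$ and $i\in\mathfrak{i}$ one has $[h_{1},h_{2}]\in\mathfrak{h}$, $[h_{2},i]\in\mathfrak{i}$ and $[i,h_{1}]\in\mathfrak{i}$, so each of the three terms pairs a bracket against a factor in the complementary summand and therefore vanishes by the orthogonality $\langle\mathfrak{h},\mathfrak{i}\rangle=0$. Thus (c) holds identically, independently of $\pi$.

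All the content therefore sits in condition (b). Writing $i_{1}=(0,u)$, $i_{2}=(0,v)$ and $h=(x,0)$, I would use the bracket of the semidirect product to compute $[i_{1},i_{2}]=(0,[u,v]_{2})$, $[i_{2},h]=(0,-\pi(x)(v))$ and $[h,i_{1}]=(0,\pi(x)(u))$. The term $\langle[i_{1},i_{2}],h\rangle$ vanishes by orthogonality, so condition (b) collapses to
\begin{equation*}
\mathbf{B}_{2}(-\pi(x)(v),u)+\mathbf{B}_{2}(\pi(x)(u),v)=0,
\end{equation*}
that is, $\mathbf{B}_{2}(\pi(x)(u),v)=\mathbf{B}_{2}(u,\pi(x)(v))$ after invoking the symmetry of $\mathbf{B}_{2}$. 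Requiring this for all $u,v\in\mathfrak{g}_{2}$ and all $x\in\mathfrak{g}_{1}$ is precisely the assertion that every derivation $\pi(x)$ is symmetric with respect to $\mathbf{B}_{2}$, and combining the three checks yields the equivalence.

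There is no genuine obstacle here; the reduction is essentially mechanical once the brackets are in hand. The one place to be careful is the sign bookkeeping in $[i_{2},h]$ and $[h,i_{1}]$, which arises from the antisymmetric combination $\pi(x_{1})(y_{2})-\pi(y_{1})(x_{2})$ in the definition of $[\cdot,\cdot]$; recording these signs correctly is exactly what makes the two surviving terms of (b) combine into the symmetry condition rather than into an antisymmetry condition.
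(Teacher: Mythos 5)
Your argument is correct and complete: the identification $\mathfrak{h}=\mathfrak{g}_{1}\times\{0\}$, $\mathfrak{i}=\{0\}\times\mathfrak{g}_{2}$, the observation that $\langle\mathfrak{h},\mathfrak{i}\rangle=0$ kills condition (c) and the first term of condition (b) of Theorem \ref{2-thm-abc}, and the sign computation $[i_{2},h]=(0,-\pi(x)(v))$, $[h,i_{1}]=(0,\pi(x)(u))$ reducing (b) to the symmetry of $\pi(x)$ are all right. The paper itself states Proposition \ref{2-prop-pi} as a citation to \cite{ggo15} and supplies no proof, so there is nothing to compare against; your reduction to the paper's own decomposition criterion is exactly the argument one would expect, and it is valid since Theorem \ref{2-thm-abc} precedes the proposition.
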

Conversely, we have the following description of decomposable cyclic metric Lie algebras.
\begin{thm}\label{2-thm-decom}
Let $(\mathfrak{g},\mathbf{B})$ be a cyclic metric Lie algebra. Assume that $\mathfrak{g}_{2}$ is an ideal of $\mathfrak{g}$ and
 the restriction of $\mathbf{B}$ to $\mathfrak{g}_{2}$ is non-degenerate.
 Then $(\mathfrak{g},\mathbf{B})
=(\mathfrak{g}_{1},\mathbf{B}|_{\mathfrak{g}_{1}})
+_{\pi}(\mathfrak{g}_{2},\mathbf{B}|_{\mathfrak{g}_{2}})$, where
 $\mathfrak{g}_{1}=\mathfrak{g}_{2}^{\perp}$ is the $\mathbf{B}$-orthogonal complement of $\mathfrak{g}_{2}$ in  $\mathfrak{g}$,
and $\pi:\mathfrak{g}_{1}\rightarrow \mathrm{Der}(\mathfrak{g}_{2})$ is given by $\pi(x)(y)=[x,y]$
for all $ x\in\mathfrak{g}_{1}$, $y\in\mathfrak{g}_{2}$. In particular, $\mathfrak{g}_{1}$ is isomorphic to the quotient Lie algebra $\mathfrak{g}/\mathfrak{g}_{2}$, and $[\mathfrak{g}_{1},\mathfrak{g}_{1}]\subset \ker \pi$.
\end{thm}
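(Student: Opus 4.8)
The plan is to verify directly that the data of $(\mathfrak{g},\mathbf{B})$ assemble into the orthogonal semidirect product $(\mathfrak{g}_{1},\mathbf{B}|_{\mathfrak{g}_{1}})+_{\pi}(\mathfrak{g}_{2},\mathbf{B}|_{\mathfrak{g}_{2}})$ exactly as defined above. First I would use that $\mathbf{B}|_{\mathfrak{g}_{2}}$ is non-degenerate: by standard linear algebra this yields the vector space decomposition $\mathfrak{g}=\mathfrak{g}_{2}\oplus\mathfrak{g}_{2}^{\perp}$ with $\mathfrak{g}_{1}:=\mathfrak{g}_{2}^{\perp}$ satisfying $\mathfrak{g}_{1}\cap\mathfrak{g}_{2}=\{0\}$. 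Since $\mathfrak{g}_{2}$ is an ideal, the earlier proposition asserting that the orthogonal complement of an ideal is a Lie subalgebra shows $\mathfrak{g}_{1}$ is a subalgebra, and $\mathbf{B}(\mathfrak{g}_{1},\mathfrak{g}_{2})=0$ holds by the very definition $\mathfrak{g}_{1}=\mathfrak{g}_{2}^{\perp}$, so $\mathbf{B}$ splits as $\mathbf{B}|_{\mathfrak{g}_{1}}+\mathbf{B}|_{\mathfrak{g}_{2}}$ on the direct sum. By Lemma \ref{2-lem-subalgebra}, both $(\mathfrak{g}_{1},\mathbf{B}|_{\mathfrak{g}_{1}})$ and $(\mathfrak{g}_{2},\mathbf{B}|_{\mathfrak{g}_{2}})$ are themselves cyclic metric Lie algebras.

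Next I would define $\pi(x)=\mathrm{ad}\,x|_{\mathfrak{g}_{2}}$ for $x\in\mathfrak{g}_{1}$. Because $\mathfrak{g}_{2}$ is an ideal we have $\pi(x)(\mathfrak{g}_{2})\subset\mathfrak{g}_{2}$, the Jacobi identity makes each $\pi(x)$ a derivation of $\mathfrak{g}_{2}$, and the same identity makes $\pi\colon\mathfrak{g}_{1}\to\mathrm{Der}(\mathfrak{g}_{2})$ a Lie algebra homomorphism. To recover the bracket, I would write $x=x_{1}+x_{2}$, $y=y_{1}+y_{2}$ with $x_{i},y_{i}$ in the respective summands and expand $[x,y]$ into four terms: $[x_{1},y_{1}]\in\mathfrak{g}_{1}$ because $\mathfrak{g}_{1}$ is a subalgebra, whereas $[x_{2},y_{2}]$, $[x_{1},y_{2}]=\pi(x_{1})(y_{2})$ and $[x_{2},y_{1}]=-\pi(y_{1})(x_{2})$ all lie in $\mathfrak{g}_{2}$. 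Collecting the $\mathfrak{g}_{1}$- and $\mathfrak{g}_{2}$-components reproduces precisely the semidirect product bracket, so both the Lie and the metric structures coincide with those of the orthogonal semidirect product.

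It then remains to check the cyclicity compatibility and the two ``in particular'' assertions. By Proposition \ref{2-prop-pi} the compatibility reduces to the symmetry of each $\pi(x)$ with respect to $\mathbf{B}|_{\mathfrak{g}_{2}}$, which I would obtain from the cyclic identity: for $x\in\mathfrak{g}_{1}$ and $y,z\in\mathfrak{g}_{2}$,
\[
\mathbf{B}([x,y],z)+\mathbf{B}([y,z],x)+\mathbf{B}([z,x],y)=0,
\]
and the middle term vanishes since $[y,z]\in\mathfrak{g}_{2}$ while $x\in\mathfrak{g}_{2}^{\perp}$, leaving $\mathbf{B}(\pi(x)y,z)=\mathbf{B}(y,\pi(x)z)$. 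The isomorphism $\mathfrak{g}_{1}\cong\mathfrak{g}/\mathfrak{g}_{2}$ follows because the canonical projection restricts to a linear bijection on $\mathfrak{g}_{1}$ (as $\mathfrak{g}=\mathfrak{g}_{1}\oplus\mathfrak{g}_{2}$) that is a homomorphism since $\mathfrak{g}_{1}$ is a subalgebra.

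The one step that genuinely uses the key lemma together with the non-degeneracy of $\mathbf{B}|_{\mathfrak{g}_{2}}$ is $[\mathfrak{g}_{1},\mathfrak{g}_{1}]\subset\ker\pi$, and I expect it to be the conceptual crux rather than the routine bookkeeping above. Regarding $\pi$ as a representation of $\mathfrak{g}_{1}$ on the space $(\mathfrak{g}_{2},\mathbf{B}|_{\mathfrak{g}_{2}})$ by operators that are symmetric (just established), Lemma \ref{2-prop-semisimple} gives $\mathbf{B}|_{\mathfrak{g}_{2}}\bigl(\pi([x,x'])(u),v\bigr)=0$ for all $x,x'\in\mathfrak{g}_{1}$ and $u,v\in\mathfrak{g}_{2}$; non-degeneracy of $\mathbf{B}|_{\mathfrak{g}_{2}}$ then forces $\pi([x,x'])=0$, i.e. $[\mathfrak{g}_{1},\mathfrak{g}_{1}]\subset\ker\pi$. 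Since everything else is a direct matching of structures, this final application of the key lemma is the only substantive input.
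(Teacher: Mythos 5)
Your proposal is correct and follows essentially the same route as the paper: decompose $\mathfrak{g}=\mathfrak{g}_{1}\oplus\mathfrak{g}_{2}$ using non-degeneracy of $\mathbf{B}|_{\mathfrak{g}_{2}}$, match the bracket and metric with the orthogonal semidirect product, identify $\mathfrak{g}_{1}$ with $\mathfrak{g}/\mathfrak{g}_{2}$ via the quotient map, and deduce $[\mathfrak{g}_{1},\mathfrak{g}_{1}]\subset\ker\pi$ from the symmetry of each $\pi(x)$ together with Lemma \ref{2-prop-semisimple} and non-degeneracy. You merely spell out details the paper leaves implicit (that $\mathfrak{g}_{1}=\mathfrak{g}_{2}^{\perp}$ is a subalgebra, and the direct derivation of the symmetry of $\pi(x)$ from the cyclic identity, which the paper delegates to Proposition \ref{2-prop-pi}).
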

\begin{proof}
Since $\mathbf{B}|_{\mathfrak{g}_{2}}$ is non-degenerate, we have  $\mathfrak{g}=\mathfrak{g}_{1}+\mathfrak{g}_{2}$, as a direct sum of vector spaces. For any $x=x_{1}+x_{2}$, $y=y_{1}+y_{2}\in\mathfrak{g}$ with $x_{1},y_{1}\in\mathfrak{g}_{1}$ and  $x_{2},y_{2}\in\mathfrak{g}_{2}$,
 we have
\begin{eqnarray*}
[x,y]&=&[x_{1}+x_{2},y_{1}+y_{2}]\\
     &=&[x_{1},y_{1}]+[x_{2},y_{2}]+[x_{1},y_{2}]-[y_{1},x_{2}],
\end{eqnarray*}
which implies that $\mathfrak{g}=\mathfrak{g}_{1}+_{\pi}\mathfrak{g}_{2}$.
Hence   $(\mathfrak{g},\mathbf{B})
=(\mathfrak{g}_{1},\mathbf{B}|_{\mathfrak{g}_{1}})
+_{\pi}(\mathfrak{g}_{2},\mathbf{B}|_{\mathfrak{g}_{2}})$.
The quotient map $\rho: \mathfrak{g}_{1}\rightarrow \mathfrak{g}/\mathfrak{g}_{2}$ defined by $\rho(x)=x+\mathfrak{g}_{2}$ for all $x\in \mathfrak{g}_{1}$, is an isomorphism.
The last claim follows by Lemma \ref{2-prop-semisimple} and Proposition \ref{2-prop-pi}.
\end{proof}

\begin{rem}
The above results show that Corollary \ref{2-cor-twodirectsum} does not hold for solvable cyclic metric Lie algebras.
For example, let $\mathfrak{g}=\mathfrak{g}_{1}\oplus\mathfrak{g}_{2}$ be the direct sum of two isomorphic two-dimensional real solvable Lie algebras   $\mathfrak{g}_{1}=\mathrm{span}\{x_{1},y_{1}\}$ and $\mathfrak{g}_{2}=\mathrm{span}\{x_{2},y_{2}\}$ with non-zero brackets
$$[x_{1},y_{1}]=y_{1},\quad [x_{2},y_{2}]=y_{2}.$$
Define a Lorentz cyclic metric $\mathbf{B}$  on $\mathfrak{g}$  by
\begin{eqnarray*}
\left\{
\begin{aligned}
  &\mathbf{B}(x_{1},x_{1})=\mathbf{B}(x_{2},x_{2})=0,\quad \mathbf{B}(x_{1},x_{2})=1, \\
  &\mathbf{B}(y_{1},y_{1})=\mathbf{B}(y_{2},y_{2})=1,\quad \mathbf{B}(y_{1},y_{2})=0,\\
  &\mathbf{B}(x_{i},y_{j})=0,\quad 1\leq i,j\leq 2.
\end{aligned}
\right.
\end{eqnarray*}
It is easy to  verify  that
$(\mathfrak{g},\mathbf{B})=(\mathfrak{h}_{1},\mathbf{B}|_{\mathfrak{h}_{1}})
+_{\pi}(\mathfrak{h}_{2},\mathbf{B}|_{\mathfrak{h}_{2}})$,
where  $\mathfrak{h}_{1}=\mathrm{span}\{x_{1},x_{2}\}$ and $\mathfrak{h}_{2}=\mathrm{span}\{y_{1},y_{2}\}$.
 However,  the cyclic metric Lie algebra $(\mathfrak{g},\mathbf{B})$ is not the orthogonal direct product of $(\mathfrak{g}_{1},\mathbf{B}|_{\mathfrak{g}_{1}})$ and
$(\mathfrak{g}_{2},\mathbf{B}|_{\mathfrak{g}_{2}})$.
\end{rem}

\begin{thm}\label{2.3-thm-sol-nondege}
Let $\mathfrak{g}$ be a  Lie algebra with a Levi-decomposition $\mathfrak{g}=\mathfrak{s}+\mathfrak{r}$. Assume that $\mathfrak{g}$ admits a  cyclic metric
$\mathbf{B}$  such that the restriction of $\mathbf{B}$ to $\mathfrak{r}$ is non-degenerate.  Then $[\mathfrak{s},\mathfrak{r}]=0$ and
 $(\mathfrak{g},\mathbf{B})
 =(\mathfrak{s},\mathbf{B}|_{\mathfrak{s}})
 \oplus(\mathfrak{r},\mathbf{B}|_{\mathfrak{r}})$
  is the orthogonal direct product of semisimple cyclic metric Lie algebra
$(\mathfrak{s},\mathbf{B}|_{\mathfrak{s}})$
 and solvable cyclic metric Lie algebra $(\mathfrak{r},\mathbf{B}|_{\mathfrak{r}})$.
\end{thm}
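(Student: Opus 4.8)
The plan is to treat the radical $\mathfrak{r}$ as a non-degenerate ideal and feed it directly into the decomposition machinery already established. Since $\mathfrak{r}$ is an ideal of $\mathfrak{g}$ and $\mathbf{B}|_{\mathfrak{r}}$ is non-degenerate, I would apply Theorem \ref{2-thm-decom} with $\mathfrak{g}_{2}=\mathfrak{r}$. This yields
\[
(\mathfrak{g},\mathbf{B})=(\mathfrak{r}^{\perp},\mathbf{B}|_{\mathfrak{r}^{\perp}})+_{\pi}(\mathfrak{r},\mathbf{B}|_{\mathfrak{r}}),
\]
where $\pi(x)(y)=[x,y]$ for $x\in\mathfrak{r}^{\perp}$, $y\in\mathfrak{r}$, the complement $\mathfrak{r}^{\perp}\cong\mathfrak{g}/\mathfrak{r}$ is semisimple, and, crucially, $[\mathfrak{r}^{\perp},\mathfrak{r}^{\perp}]\subset\ker\pi$.

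The crux is to force $\pi=0$. Because $\mathfrak{r}^{\perp}$ is semisimple it is perfect, so $\mathfrak{r}^{\perp}=[\mathfrak{r}^{\perp},\mathfrak{r}^{\perp}]\subset\ker\pi$ and hence $\pi=0$. Equivalently, one can route this through the key Lemma \ref{2-prop-semisimple}: by Proposition \ref{2-prop-pi} each $\pi(x)$ is symmetric with respect to the non-degenerate form $\mathbf{B}|_{\mathfrak{r}}$, and a semisimple algebra acting by symmetric operators on a non-degenerate bilinear space must act trivially. Either way, $[\mathfrak{r}^{\perp},\mathfrak{r}]=0$, the semidirect product degenerates to a direct product, and $(\mathfrak{g},\mathbf{B})=(\mathfrak{r}^{\perp},\mathbf{B}|_{\mathfrak{r}^{\perp}})\oplus(\mathfrak{r},\mathbf{B}|_{\mathfrak{r}})$ is an orthogonal direct product in which $\mathfrak{r}^{\perp}$ is a semisimple ideal.

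The remaining task is to identify the canonical complement $\mathfrak{r}^{\perp}$ with the given Levi factor $\mathfrak{s}$. I would use the projections $p\colon\mathfrak{g}=\mathfrak{r}^{\perp}\oplus\mathfrak{r}\to\mathfrak{r}^{\perp}$ and $q=\mathrm{id}-p\colon\mathfrak{g}\to\mathfrak{r}$, both of which are Lie algebra homomorphisms precisely because $[\mathfrak{r}^{\perp},\mathfrak{r}]=0$ makes all cross-brackets vanish. Restricting $q$ to $\mathfrak{s}$ produces a homomorphism from the perfect algebra $\mathfrak{s}$ into the solvable algebra $\mathfrak{r}$; its image is a perfect subalgebra of a solvable Lie algebra, hence zero. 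Therefore $\mathfrak{s}\subset\mathfrak{r}^{\perp}$, and the dimension count $\dim\mathfrak{s}=\dim\mathfrak{g}/\mathfrak{r}=\dim\mathfrak{r}^{\perp}$ gives $\mathfrak{s}=\mathfrak{r}^{\perp}$. Consequently $[\mathfrak{s},\mathfrak{r}]=0$ and the asserted orthogonal direct product holds for the prescribed Levi factor.

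I expect the genuine obstacle to be this final identification rather than the vanishing of $\pi$: the decomposition of Theorem \ref{2-thm-decom} is canonically attached to $\mathfrak{r}^{\perp}$, while the statement is phrased for an arbitrary Levi factor $\mathfrak{s}$. The homomorphism argument above lets me avoid invoking the uniqueness of Levi factors up to conjugacy, but I must verify carefully that $p$ and $q$ really are Lie homomorphisms, which is exactly what the already-established relation $[\mathfrak{r}^{\perp},\mathfrak{r}]=0$ guarantees.
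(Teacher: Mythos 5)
Your proposal is correct and follows essentially the same route as the paper: apply Theorem \ref{2-thm-decom} with $\mathfrak{g}_{2}=\mathfrak{r}$ and then kill $\pi$ via Lemma \ref{2-prop-semisimple} (or, equivalently, via perfectness of the semisimple complement together with $[\mathfrak{g}_{1},\mathfrak{g}_{1}]\subset\ker\pi$). The only difference is that you explicitly justify the identification of the given Levi factor $\mathfrak{s}$ with $\mathfrak{r}^{\perp}$ by projecting $\mathfrak{s}$ into the solvable factor, a point the paper simply asserts by writing $\mathfrak{s}=\mathfrak{r}^{\perp}$; your extra step is sound and arguably makes the argument more complete.
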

\begin{proof}
By Theorem \ref{2-thm-decom},
$(\mathfrak{g},\mathbf{B})
=(\mathfrak{s},\mathbf{B}|_{\mathfrak{s}})+_{\pi}(\mathfrak{r},\mathbf{B}|_{\mathfrak{r}})$,
where $\mathfrak{s}=\mathfrak{r}^{\perp}$ is isomorphic to the quotient Lie algebra  $\mathfrak{g}/\mathfrak{r}$
and hence semisimple. By Lemma \ref{2-prop-semisimple}, $\pi=0$, so the result follows.
\end{proof}

\begin{cor}\label{2.3-cor-non-degenerate}
Let $(\mathfrak{g},\mathbf{B})$ be a cyclic metric Lie algebra.
If the restriction of $\mathbf{B}$ to $\mathrm{nil}\,\mathfrak{g}$ is non-degenerate,
 then
 $(\mathfrak{g},\mathbf{B})
  =(\mathfrak{s},\mathbf{B}|_{\mathfrak{s}})
 \oplus(\mathfrak{r},\mathbf{B}|_{\mathfrak{r}})$.
\end{cor}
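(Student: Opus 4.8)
The plan is to peel off the nilradical first and then exploit the reductive structure of the quotient, rather than trying to invoke Theorem \ref{2.3-thm-sol-nondege} directly. The temptation is to deduce that $\mathbf{B}|_{\mathfrak{r}}$ is non-degenerate and quote that theorem, but this is false in general: already on the two-dimensional non-abelian solvable algebra $[x,y]=y$ one can put a cyclic metric with $\mathbf{B}(y,y)\neq 0$ (so $\mathbf{B}|_{\mathrm{nil}\,\mathfrak{g}}$ non-degenerate) yet $\mathbf{B}|_{\mathfrak{r}}$ degenerate. So the argument must instead isolate the semisimple part.

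Set $\mathfrak{n}:=\mathrm{nil}\,\mathfrak{g}$. Since $\mathfrak{n}$ is an ideal of $\mathfrak{g}$ on which $\mathbf{B}$ restricts non-degenerately, I would apply Theorem \ref{2-thm-decom} with $\mathfrak{g}_2=\mathfrak{n}$: putting $\mathfrak{k}:=\mathfrak{n}^{\perp}$, one gets $(\mathfrak{g},\mathbf{B})=(\mathfrak{k},\mathbf{B}|_{\mathfrak{k}})+_{\pi}(\mathfrak{n},\mathbf{B}|_{\mathfrak{n}})$, where $\mathfrak{k}\cong\mathfrak{g}/\mathfrak{n}$ is a subalgebra, $\pi(x)=\mathrm{ad}(x)|_{\mathfrak{n}}$, and crucially $[\mathfrak{k},\mathfrak{k}]\subset\ker\pi$. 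Next I would record that $\mathfrak{g}/\mathfrak{n}$ is reductive: since $\mathrm{nil}\,\mathfrak{r}=\mathfrak{n}$ one has $[\mathfrak{r},\mathfrak{r}]\subset\mathfrak{n}$, which together with $[\mathfrak{s},\mathfrak{a}]=0$ gives $[\mathfrak{g},\mathfrak{a}]\subset\mathfrak{n}$; hence the image of $\mathfrak{a}$ is central in $\mathfrak{g}/\mathfrak{n}$ and $\mathfrak{g}/\mathfrak{n}=\bar{\mathfrak{s}}\oplus\bar{\mathfrak{a}}$ with $\bar{\mathfrak{s}}$ semisimple and $\bar{\mathfrak{a}}$ central. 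Transporting this back to $\mathfrak{k}$ gives $\mathfrak{k}=\mathfrak{s}_0\oplus\mathfrak{z}$ with $\mathfrak{s}_0:=[\mathfrak{k},\mathfrak{k}]$ semisimple and $\mathfrak{z}:=\mathfrak{z}(\mathfrak{k})$ abelian.

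Now $\mathfrak{s}_0=[\mathfrak{k},\mathfrak{k}]\subset\ker\pi$ gives at once $[\mathfrak{s}_0,\mathfrak{n}]=0$ (equivalently, Lemma \ref{2-prop-semisimple} applied to $\pi|_{\mathfrak{s}_0}$, a representation of the semisimple $\mathfrak{s}_0$ by $\mathbf{B}|_{\mathfrak{n}}$-symmetric operators on the non-degenerate space $(\mathfrak{n},\mathbf{B}|_{\mathfrak{n}})$, forces $\pi|_{\mathfrak{s}_0}=0$). Because $\mathfrak{s}_0$ is semisimple with $\mathfrak{s}_0\cap\mathfrak{r}=0$ and $\dim\mathfrak{s}_0=\dim\bar{\mathfrak{s}}=\dim\mathfrak{s}$, it is a Levi factor of $\mathfrak{g}$, and it commutes with $\mathfrak{z}$ (center of $\mathfrak{k}$) and with $\mathfrak{n}$. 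Since $\mathfrak{z}\oplus\mathfrak{n}$ is a solvable ideal of dimension $\dim\mathfrak{r}$, it equals $\mathfrak{r}$, whence $[\mathfrak{s}_0,\mathfrak{r}]=0$ and $\mathfrak{g}=\mathfrak{s}_0\oplus\mathfrak{r}$ is a direct sum of ideals with $\mathfrak{s}_0$ semisimple. Corollary \ref{2-cor-twodirectsum} then delivers $\mathbf{B}(\mathfrak{s}_0,\mathfrak{r})=0$ and the desired orthogonal direct product. To phrase the conclusion for the originally fixed Levi factor $\mathfrak{s}$, I would use that any two Levi factors are conjugate by an inner automorphism $\exp(\mathrm{ad}\,n)$ with $n\in\mathfrak{n}$ fixing $\mathfrak{r}$, so $[\mathfrak{s},\mathfrak{r}]=0$ as well and Corollary \ref{2-cor-twodirectsum} applies verbatim.

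The main obstacle is conceptual rather than computational: one must resist reducing to Theorem \ref{2.3-thm-sol-nondege} (non-degeneracy of $\mathbf{B}|_{\mathfrak{r}}$ genuinely can fail) and instead split off the nilradical and extract the semisimple part $[\mathfrak{n}^{\perp},\mathfrak{n}^{\perp}]$ of the reductive complement $\mathfrak{n}^{\perp}$, verifying that this part is a Levi factor centralizing the \emph{whole} radical. The relation $[\mathfrak{n}^{\perp},\mathfrak{n}^{\perp}]\subset\ker\pi$ supplied by Theorem \ref{2-thm-decom} does the essential work; the remaining care is the standard bookkeeping that $\mathrm{nil}\,\mathfrak{r}=\mathfrak{n}$, that $\mathfrak{g}/\mathfrak{n}$ is reductive, and that $[\mathfrak{n}^{\perp},\mathfrak{n}^{\perp}]$ has the dimension of a Levi factor.
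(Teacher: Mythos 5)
Your proof is correct and follows essentially the same route as the paper: split off $\mathrm{nil}\,\mathfrak{g}$ via Theorem \ref{2-thm-decom}, observe that the orthogonal complement $(\mathrm{nil}\,\mathfrak{g})^{\perp}\cong\mathfrak{g}/\mathrm{nil}\,\mathfrak{g}$ is reductive, kill the action of its semisimple part on $\mathrm{nil}\,\mathfrak{g}$ via $[\mathfrak{k},\mathfrak{k}]\subset\ker\pi$ (i.e.\ Lemma \ref{2-prop-semisimple}), and finish with Corollary \ref{2-cor-twodirectsum}. Your only additions are the (valid) warning that $\mathbf{B}|_{\mathfrak{r}}$ need not be non-degenerate, and the explicit conjugacy argument identifying the extracted semisimple part with the fixed Levi factor $\mathfrak{s}$ — a detail the paper passes over by simply choosing $\mathfrak{s}$ and $\mathfrak{a}$ inside $(\mathrm{nil}\,\mathfrak{g})^{\perp}$.
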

\begin{proof}
By Theorem \ref{2-thm-decom},
$(\mathfrak{g},\mathbf{B})
=((\mathrm{nil}\,\mathfrak{g})^{\perp},\mathbf{B}|_{(\mathrm{nil}\,\mathfrak{g})^{\perp}})
+_{\pi}(\mathrm{nil}\,\mathfrak{g},\mathbf{B}|_{\mathrm{nil}\,\mathfrak{g}})$,
where $(\mathrm{nil}\,\mathfrak{g})^{\perp}\cong\mathfrak{g}/\mathrm{nil}\,\mathfrak{g}$ is reductive.
By Corollary \ref{2-cor-reductive}, $((\mathrm{nil}\,\mathfrak{g})^{\perp},\mathbf{B}|_{(\mathrm{nil}\,\mathfrak{g})^{\perp}})
=(\mathfrak{s},\mathbf{B}|_{\mathfrak{s}})\oplus(\mathfrak{a},\mathbf{B}|_{\mathfrak{a}})$
is an orthogonal direct product of  a semisimple cyclic metric Lie algebra $(\mathfrak{s},\mathbf{B}|_{\mathfrak{s}})$ and  an Abelian metric Lie algebra $(\mathfrak{a},\mathbf{B}|_{\mathfrak{a}})$.
By Lemma \ref{2-prop-semisimple}, it is easily seen that $[\mathfrak{s},\mathrm{nil}\,\mathfrak{g}]=0$ and consequently  $[\mathfrak{s},\mathfrak{a}+\mathrm{nil}\,\mathfrak{g}]=0$.
Since
$\mathfrak{r}=\mathfrak{a}+\mathrm{nil}\,\mathfrak{g}$,
the result follows by Corollary \ref{2-cor-twodirectsum}.
\end{proof}

We now demonstrate the orthogonal property of $\mathfrak{s}$ and $\mathfrak{r}$ for Lorentz and trans-Lorentz cyclic metric  Lie algebras.
The following proposition shows that $[\mathfrak{s},\mathfrak{r}]\neq 0$ for trans-Lorentz cyclic metric Lie algebras
\begin{prop}\label{2-lem-sl(2)}
Let $\mathfrak{g}=\mathfrak{sl}(2,\mathbb{F})+_{\pi}\mathbb{F}^{2}$ be the  Lie algebra defined by the natural $\mathfrak{sl}(2,\mathbb{F})$-action on $\mathbb{F}^{2}$. Suppose that $\mathbf{B}$ is a cyclic metric on  $\mathfrak{g}$, then $\mathbf{B}(\mathbb{F}^{2},\mathfrak{g})=0$.
Consequently, every cyclic metric on $\mathfrak{g}$ is obtained by a trivial extension of a cyclic metric on
$\mathfrak{sl}(2,\mathbb{F})$.
\end{prop}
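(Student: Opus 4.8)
The plan is to split the desired identity $\mathbf{B}(\mathbb{F}^{2},\mathfrak{g})=0$ into the two blocks $\mathbf{B}(\mathbb{F}^{2},\mathbb{F}^{2})=0$ and $\mathbf{B}(\mathfrak{sl}(2,\mathbb{F}),\mathbb{F}^{2})=0$, exploiting that the cyclic identity organizes itself according to how many of its three arguments lie in the abelian ideal $V:=\mathbb{F}^{2}$. Write $\mathfrak{s}:=\mathfrak{sl}(2,\mathbb{F})$ and let $\pi$ denote the standard (irreducible, nontrivial) action of $\mathfrak{s}$ on $V$, so that $[x,v]=\pi(x)(v)$ for $x\in\mathfrak{s}$, $v\in V$.

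For the first block, the cyclic identity with one argument in $\mathfrak{s}$ and two in $V$ reduces, via $[V,V]=0$, to $\mathbf{B}(\pi(x)u,v)=\mathbf{B}(\pi(x)v,u)$; that is, every $\pi(x)$ is symmetric with respect to $\mathbf{K}:=\mathbf{B}|_{V}$ (cf.\ Proposition~\ref{2-prop-pi}). Since $\mathfrak{s}$ is semisimple and the standard representation $\pi$ has no trivial subrepresentation, the ``in particular'' clause of Lemma~\ref{2-prop-semisimple} yields $\mathbf{K}=\mathbf{B}|_{V}=0$, i.e.\ $\mathbf{B}(V,V)=0$. (Alternatively, Corollary~\ref{2-cor-abelian} gives $\mathbf{B}([[\mathfrak{g},\mathfrak{g}],V],V)=0$, and $[[\mathfrak{g},\mathfrak{g}],V]=V$.)

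For the mixed block I would fix the standard triple $\{e,f,h\}$ and the weight basis $v_{1},v_{2}$ of $V$, with $h\cdot v_{1}=v_{1}$, $h\cdot v_{2}=-v_{2}$, $e\cdot v_{2}=v_{1}$, $f\cdot v_{1}=v_{2}$ and $e\cdot v_{1}=f\cdot v_{2}=0$, and evaluate the cyclic identity with two arguments in $\mathfrak{s}$ and one in $V$, namely $\mathbf{B}([x,y],v)+\mathbf{B}(y\cdot v,x)-\mathbf{B}(x\cdot v,y)=0$. The triples built from $h$ give $\mathbf{B}(e,v_{1})=\mathbf{B}(f,v_{2})=0$ together with $\mathbf{B}(h,v_{2})=3\mathbf{B}(f,v_{1})$ and $\mathbf{B}(h,v_{1})=-3\mathbf{B}(e,v_{2})$, while the triples $(e,f,v_{1})$ and $(e,f,v_{2})$ give $\mathbf{B}(h,v_{1})=-\mathbf{B}(e,v_{2})$ and $\mathbf{B}(h,v_{2})=\mathbf{B}(f,v_{1})$. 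Comparing the two expressions for each $\mathbf{B}(h,v_{i})$ forces $2\mathbf{B}(f,v_{1})=2\mathbf{B}(e,v_{2})=0$, so by $\mathrm{char}\,\mathbb{F}\neq2$ all six pairings between $e,f,h$ and $v_{1},v_{2}$ vanish; hence $\mathbf{B}(\mathfrak{s},V)=0$.

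I expect the mixed block to be the main obstacle. Unlike the case of $\mathbf{B}(V,V)$, the constraint on $\mathbf{B}(\mathfrak{s},V)$ is a twisted, non-equivariant relation of the form $\Phi([x,y])=y\cdot\Phi(x)-x\cdot\Phi(y)$ for the map $\Phi(x):=\mathbf{B}(x,\cdot)|_{V}$, so neither Schur's lemma nor Lemma~\ref{2-prop-semisimple} applies directly, and one is pushed into the explicit weight computation above; there the coefficient mismatch (the factor $3$ against $1$) together with $\mathrm{char}\,\mathbb{F}\neq2$ is precisely what makes the resulting linear system non-degenerate. Combining the two blocks gives $\mathbf{B}(V,\mathfrak{g})=0$, and the final assertion follows because conditions (b) and (c) of Theorem~\ref{2-thm-abc} then become vacuous, so $\mathbf{B}$ is the trivial extension of the cyclic metric $\mathbf{B}|_{\mathfrak{s}}$ on $\mathfrak{sl}(2,\mathbb{F})$.
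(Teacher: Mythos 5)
Your proposal is correct and follows essentially the same route as the paper: the $\mathbf{B}(V,V)=0$ block via symmetry of $\pi(x)$ and Lemma \ref{2-prop-semisimple}, and the mixed block via the same explicit weight-basis computation (your $e,f,h,v_1,v_2$ are the paper's $X,Y,H,e_1,e_2$), with the same $3$-versus-$1$ coefficient clash and $\mathrm{char}\,\mathbb{F}\neq 2$ closing the argument. All the stated relations check out, so nothing further is needed.
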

\begin{proof}
For all $x\in \mathfrak{sl}(2,\mathbb{F})\subset \mathfrak{g}$ and $y,z\in \mathbb{F}^{2}\subset \mathfrak{g}$, we have
\begin{eqnarray*}
0&=&\mathbf{B}([y,z],x)+\mathbf{B}([z,x],y)+\mathbf{B}([x,y],z)\\
 &=&-\mathbf{B}([x,z],y)+\mathbf{B}([x,y],z),
\end{eqnarray*}
which implies that $\mathrm{ad}\, x|_{\mathbb{F}^{2}}$ is symmetric. Then by Lemma \ref{2-prop-semisimple}, we get
$$0=\mathbf{B}([\mathfrak{sl}(2,\mathbb{F}),\mathbb{F}^{2}],\mathbb{F}^{2})
=\mathbf{B}(\mathbb{F}^{2},\mathbb{F}^{2}).$$

Now let $\{H,X,Y\}$ be the basis of $\mathfrak{sl}(2,\mathbb{F})$ given by
\begin{eqnarray*}
H=\left(
                  \begin{array}{cc}
                    1 & 0 \\
                    0 & -1 \\
                  \end{array}
                \right),\quad
 X=\left(
            \begin{array}{cc}
              0 & 1 \\
              0 & 0 \\
            \end{array}
          \right),\quad
 Y=\left(
            \begin{array}{cc}
              0 & 0 \\
              1 & 0 \\
            \end{array}
          \right).
\end{eqnarray*}
It is straightforward to verify that
\begin{eqnarray*}
[H,X]=2X,\quad [H,Y]=-2Y,\quad [X,Y]=H.
\end{eqnarray*}
Thus $\mathbf{B}|_{\mathfrak{sl}(2,\mathbb{F})}$ is cyclic if and only if $4\mathbf{B}(X,Y)+\mathbf{B}(H,H)=0$.

Let $e_{1}=(1,0)^{T},e_{2}=(0,1)^{T}\in \mathbb{F}^{2}$, one has
\begin{eqnarray*}
\left\{
\begin{aligned}
&[H,e_{1}]=e_{1},\quad  [H,e_{2}]=-e_{2},\\
&[X,e_{1}]=0,\quad [X,e_{2}]=e_{1},\\
&[Y,e_{1}]=e_{2},\quad [Y,e_{2}]=0.
\end{aligned}
\right.
\end{eqnarray*}
Note that
\begin{eqnarray*}
0&=&\mathbf{B}([H,X],e_{1})+\mathbf{B}([X,e_{1}],H)+\mathbf{B}([e_{1},H],X)\\
 &=&2\mathbf{B}(X,e_{1})-\mathbf{B}(e_{1},X)\\
 &=& \mathbf{B}(X,e_{1}),
\end{eqnarray*}
and
\begin{eqnarray*}
0&=& \mathbf{B}([H,Y],e_{2})+\mathbf{B}([Y,e_{2}],H)+\mathbf{B}([e_{2},H],Y)\\
 &=& -2\mathbf{B}(Y,e_{2})+\mathbf{B}(e_{2},Y)\\
 &=& -\mathbf{B}(Y,e_{2}),
\end{eqnarray*}
which implies  $\mathbf{B}(X,e_{1})=\mathbf{B}(Y,e_{2})=0.$
Moreover,
\begin{eqnarray*}
0&=&\mathbf{B}([H,X],e_{2})+\mathbf{B}([X,e_{2}],H)+\mathbf{B}([e_{2},H],X)\\
 &=& 2\mathbf{B}(X,e_{2})+\mathbf{B}(e_{1},H)+\mathbf{B}(e_{2},X)\\
 &=& 3\mathbf{B}(X,e_{2})+\mathbf{B}(H,e_{1}),
\end{eqnarray*}
and
\begin{eqnarray*}
0&=& \mathbf{B}([X,Y],e_{1})+\mathbf{B}([Y,e_{1}],X)+\mathbf{B}([e_{1},X],Y)\\
 &=& \mathbf{B}(H,e_{1})+\mathbf{B}(X,e_{2}),
\end{eqnarray*}
we obtain $\mathbf{B}(X,e_{2})=\mathbf{B}(H,e_{1})=0$.

Finally,
\begin{eqnarray*}
0&=& \mathbf{B}([H,Y],e_{1})+\mathbf{B}([Y,e_{1}],H)+\mathbf{B}([e_{1},H],Y)\\
 &=& -2\mathbf{B}(Y,e_{1})+\mathbf{B}(e_{2},H)-\mathbf{B}(e_{1},Y)\\
 &=& \mathbf{B}(H,e_{2})-3\mathbf{B}(Y,e_{1}),
\end{eqnarray*}
and
\begin{eqnarray*}
0&=& \mathbf{B}([X,Y],e_{2})+\mathbf{B}([Y,e_{2}],X)+\mathbf{B}([e_{2},X],Y)\\
 &=& \mathbf{B}(H,e_{2})-\mathbf{B}(e_{1},Y),
\end{eqnarray*}
which gives $\mathbf{B}(H,e_{2})=\mathbf{B}(Y,e_{1})=0$. This shows that $\mathbf{B}(\mathfrak{sl}(2,\mathbb{F}),\mathbb{F}^{2})=0$, completing the proof.
\end{proof}
\begin{cor}
Let $\mathfrak{g}=\mathfrak{gl}(2,\mathbb{F})+_{\pi}\mathbb{F}^{2}$ be the  Lie algebra defined by the natural $\mathfrak{gl}(2,\mathbb{F})$-action on $\mathbb{F}^{2}$. Suppose that $\mathbf{B}$ is a cyclic metric on  $\mathfrak{g}$, then $\mathbf{B}(\mathbb{F}^{2},\mathfrak{g})=0$.
Consequently, every cyclic metric on $\mathfrak{g}$ is obtained by a trivial extension of a cyclic metric on
$\mathfrak{gl}(2,\mathbb{F})$.
\end{cor}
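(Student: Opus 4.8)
The plan is to reduce everything to Proposition~\ref{2-lem-sl(2)} by exploiting the splitting $\mathfrak{gl}(2,\mathbb{F})=\mathfrak{sl}(2,\mathbb{F})\oplus\mathbb{F}I$, where $I$ is the identity matrix. The element $I$ is central in $\mathfrak{gl}(2,\mathbb{F})$ (so $[I,A]=0$ for every $A\in\mathfrak{gl}(2,\mathbb{F})$) and acts on $\mathbb{F}^{2}$ by the identity, i.e.\ $[I,v]=v$ for all $v\in\mathbb{F}^{2}$. As a vector space we therefore have the decomposition $\mathfrak{g}=\mathfrak{sl}(2,\mathbb{F})\oplus\mathbb{F}I\oplus\mathbb{F}^{2}$, and since $\mathbf{B}(\mathbb{F}^{2},\mathfrak{g})=0$ is additive over this decomposition, it suffices to verify the vanishing against each of the three summands separately.

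First I would observe that $\mathfrak{h}=\mathfrak{sl}(2,\mathbb{F})+_{\pi}\mathbb{F}^{2}$ is a Lie subalgebra of $\mathfrak{g}$ (indeed $[\mathfrak{sl}(2,\mathbb{F}),\mathbb{F}^{2}]\subset\mathbb{F}^{2}$ and $[\mathbb{F}^{2},\mathbb{F}^{2}]=0$), precisely the algebra treated in Proposition~\ref{2-lem-sl(2)}. By Lemma~\ref{2-lem-subalgebra}, the restriction $\mathbf{B}|_{\mathfrak{h}}$ is again cyclic, so Proposition~\ref{2-lem-sl(2)} applies verbatim and yields $\mathbf{B}(\mathbb{F}^{2},\mathfrak{sl}(2,\mathbb{F}))=0$ together with $\mathbf{B}(\mathbb{F}^{2},\mathbb{F}^{2})=0$. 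This disposes of the first and third summands at no extra cost; only the pairing $\mathbf{B}(\mathbb{F}^{2},I)$ remains.

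For the remaining direction I would apply the cyclic cocycle identity to the triples $(H,I,e_{1})$ and $(H,I,e_{2})$, where $H$, $e_{1}$, $e_{2}$ are as in Proposition~\ref{2-lem-sl(2)}. Using $[H,I]=0$, $[I,e_{i}]=e_{i}$, and $[e_{1},H]=-e_{1}$, $[e_{2},H]=e_{2}$, the identity collapses to
\begin{equation*}
\mathbf{B}(e_{1},H)-\mathbf{B}(e_{1},I)=0,\qquad \mathbf{B}(e_{2},H)+\mathbf{B}(e_{2},I)=0.
\end{equation*}
Since $\mathbf{B}(H,e_{1})=\mathbf{B}(H,e_{2})=0$ was already established in the previous step, these force $\mathbf{B}(I,e_{1})=\mathbf{B}(I,e_{2})=0$, and hence $\mathbf{B}(\mathbb{F}^{2},I)=0$. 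Combining the three vanishings gives $\mathbf{B}(\mathbb{F}^{2},\mathfrak{g})=0$, whence $\mathbf{B}$ is the trivial extension of its restriction $\mathbf{B}|_{\mathfrak{gl}(2,\mathbb{F})}$, which is cyclic by Lemma~\ref{2-lem-subalgebra}.

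I do not expect a genuine obstacle here, since the substantive content is carried entirely by Proposition~\ref{2-lem-sl(2)}; the only new point is the behaviour of the central generator $I$. The single thing to be careful about is that the argument for $\mathbf{B}(\mathbb{F}^{2},I)=0$ must use that $I$ acts non-trivially on $\mathbb{F}^{2}$ (so that $[I,e_{i}]=e_{i}\neq0$), which is exactly what lets the cocycle identity produce the desired pairing rather than a vacuous relation; no division by characteristic-sensitive constants is involved, so the hypothesis $\mathrm{char}\,\mathbb{F}\neq2,3$ is not even needed for this last step.
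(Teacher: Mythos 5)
Your proof is correct, and it rests on the same foundation as the paper's: both first restrict to the subalgebra $\mathfrak{sl}(2,\mathbb{F})+_{\pi}\mathbb{F}^{2}$ and invoke Proposition \ref{2-lem-sl(2)} to get $\mathbf{B}(\mathbb{F}^{2},\mathfrak{sl}(2,\mathbb{F})+_{\pi}\mathbb{F}^{2})=0$, and then apply the cyclic identity once more to deal with the rest of $\mathfrak{gl}(2,\mathbb{F})$. The difference is in how that last step is executed. You split off the central element $I$ and compute explicitly with the triples $(H,I,e_{1})$ and $(H,I,e_{2})$; the computations $[H,I]=0$, $[I,e_{i}]=e_{i}$, $[e_{1},H]=-e_{1}$, $[e_{2},H]=e_{2}$ are all correct, and since $\mathbf{B}(H,e_{i})=0$ is already known, the two resulting relations do force $\mathbf{B}(I,e_{i})=0$. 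The paper instead uses the basis-free observation that $\mathbb{F}^{2}=[\mathfrak{sl}(2,\mathbb{F}),\mathbb{F}^{2}]$, so for any $x\in\mathfrak{gl}(2,\mathbb{F})$ the cyclic identity gives $\mathbf{B}(\mathbb{F}^{2},x)=\mathbf{B}([\mathfrak{sl}(2,\mathbb{F}),\mathbb{F}^{2}],x)=-\mathbf{B}([\mathbb{F}^{2},x],\mathfrak{sl}(2,\mathbb{F}))-\mathbf{B}([x,\mathfrak{sl}(2,\mathbb{F})],\mathbb{F}^{2})$, and both terms lie in pairings already known to vanish. The paper's version is uniform in $x$ and needs no decomposition of $\mathfrak{gl}(2,\mathbb{F})$ or choice of basis, which makes it the more transferable argument (it would work for any reductive enlargement of $\mathfrak{sl}(2,\mathbb{F})$ acting so that $[\mathfrak{sl}(2,\mathbb{F}),V]=V$); your version is more elementary and makes visible exactly where the nontrivial action of $I$ on $\mathbb{F}^{2}$ enters. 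Your closing remark that the final step involves no characteristic-sensitive divisions is accurate, though of course Proposition \ref{2-lem-sl(2)} itself still relies on $\mathrm{char}\,\mathbb{F}\neq 2,3$.
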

\begin{proof}
By Proposition \ref{2-lem-sl(2)},  $\mathbf{B}(\mathbb{F}^{2},\mathfrak{sl}(2,\mathbb{F})+_{\pi}\mathbb{F}^{2})=0$.
Then for all $x\in \mathfrak{gl}(2,\mathbb{F})$, we have
\begin{eqnarray*}
  \mathbf{B}(\mathbb{F}^{2},x) &=& \mathbf{B}([\mathfrak{sl}(2,\mathbb{F}),\mathbb{F}^{2}],x) \\
   &=& -\mathbf{B}([\mathbb{F}^{2},x],\mathfrak{sl}(2,\mathbb{F}))
   -\mathbf{B}([x,\mathfrak{sl}(2,\mathbb{F})],\mathbb{F}^{2})\\
   &=& 0,
\end{eqnarray*}
which asserts that $\mathbf{B}(\mathbb{F}^{2},\mathfrak{g})=0$.
\end{proof}
\begin{prop}\label{2-lem-tran-lorentz}
Let $(\mathfrak{g},\mathbf{B})$ be a cyclic metric Lie algebra
 with a Levi-decomposition $\mathfrak{g}=\mathfrak{s}+\mathfrak{r}$.
If the restriction of $\mathbf{B}$ to $\mathrm{nil}\,\mathfrak{g}$ is  Lorentz or trans-Lorentz,
 then $\dim\, [\mathfrak{s},\mathfrak{r}]\leq2$ and $\mathbf{B}(\mathfrak{s},\mathfrak{r})=0$.
\end{prop}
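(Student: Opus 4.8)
My plan is to show that both conclusions follow from the single identity $\mathbf{B}(\mathfrak{s},\mathrm{nil}\,\mathfrak{g})=0$, and then to prove that identity using the smallness of the index. First I record the reductions. Since $[\mathfrak{g},\mathfrak{r}]\subseteq\mathrm{nil}\,\mathfrak{g}$ and $[\mathfrak{s},\mathfrak{a}]=0$, we have $[\mathfrak{s},\mathfrak{r}]=[\mathfrak{s},\mathrm{nil}\,\mathfrak{g}]=:W$. By complete reducibility of the $\mathfrak{s}$-action on $\mathrm{nil}\,\mathfrak{g}$, I decompose $\mathrm{nil}\,\mathfrak{g}=V_{0}\oplus W$, where $V_{0}$ is the subspace of $\mathfrak{s}$-fixed vectors and $W$ is the sum of the nontrivial irreducible components, so that $[\mathfrak{s},W]=W$. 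Applying Lemma \ref{2-lem-decomposition} to the semisimple algebra $\mathfrak{s}$ and the spaces $\mathfrak{a}$ and $V_{0}$, on which $[\mathfrak{s},\cdot]=0$, yields $\mathbf{B}(\mathfrak{s},\mathfrak{a})=0$ and $\mathbf{B}(\mathfrak{s},V_{0})=0$; hence $\mathbf{B}(\mathfrak{s},\mathfrak{r})=\mathbf{B}(\mathfrak{s},W)$, and the claim $\mathbf{B}(\mathfrak{s},\mathfrak{r})=0$ is equivalent to $\mathbf{B}(\mathfrak{s},\mathrm{nil}\,\mathfrak{g})=0$.

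Next I would establish the \emph{key reduction}: the identity $\mathbf{B}(\mathfrak{s},\mathrm{nil}\,\mathfrak{g})=0$ implies the whole proposition. For $x\in\mathfrak{s}$ and $u,v\in\mathrm{nil}\,\mathfrak{g}$, the cyclic condition rearranges to $\mathbf{B}([x,u],v)-\mathbf{B}([x,v],u)=-\mathbf{B}([u,v],x)$; since $[u,v]\in\mathrm{nil}\,\mathfrak{g}$, the right-hand side vanishes under the hypothesis, so every operator $\mathrm{ad}\,x|_{\mathrm{nil}\,\mathfrak{g}}$ is symmetric with respect to $\mathbf{B}|_{\mathrm{nil}\,\mathfrak{g}}$. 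Lemma \ref{2-prop-semisimple}, applied to the representation $\mathrm{ad}|_{\mathrm{nil}\,\mathfrak{g}}$ of $\mathfrak{s}$ and the form $\mathbf{B}|_{\mathrm{nil}\,\mathfrak{g}}$, then gives $\mathbf{B}([[\mathfrak{s},\mathfrak{s}],\mathrm{nil}\,\mathfrak{g}],\mathrm{nil}\,\mathfrak{g})=0$; since $[\mathfrak{s},\mathfrak{s}]=\mathfrak{s}$ and $[\mathfrak{s},\mathrm{nil}\,\mathfrak{g}]=W$, this is exactly $\mathbf{B}(W,\mathrm{nil}\,\mathfrak{g})=0$. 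Thus $W$ lies in the radical of $\mathbf{B}|_{\mathrm{nil}\,\mathfrak{g}}$, hence is isotropic, and the Lorentz or trans-Lorentz hypothesis forces $\dim[\mathfrak{s},\mathfrak{r}]=\dim W\le\mathrm{Ind}(\mathrm{nil}\,\mathfrak{g},\mathbf{B}|_{\mathrm{nil}\,\mathfrak{g}})\le 2$. Together with $\mathbf{B}(\mathfrak{s},\mathfrak{r})=\mathbf{B}(\mathfrak{s},W)=0$, both conclusions follow.

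It remains to prove $\mathbf{B}(\mathfrak{s},\mathrm{nil}\,\mathfrak{g})=0$, and this is the step where the index hypothesis is indispensable: without a bound on the index there exist cyclic metric Lie algebras with $\mathbf{B}(\mathfrak{s},\mathfrak{r})\neq0$ (the double extensions of Theorem \ref{1-thm-main3}), so no purely representation-theoretic argument can suffice. I would argue by contradiction. Assuming $\mathbf{B}(\mathfrak{s},W)\neq0$, I fix an $\mathfrak{sl}(2)$-triple $\{H,X,Y\}$ in a simple factor of $\mathfrak{s}$ that pairs nontrivially with $W$, decompose $\mathrm{nil}\,\mathfrak{g}=\bigoplus_{k}\mathrm{nil}_{k}$ into $\mathrm{ad}\,H$-weight spaces, and exploit the weighted cyclic relations, for instance $(k-l)\mathbf{B}(u,v)=-\mathbf{B}([u,v],H)$ for $u\in\mathrm{nil}_{k}$, $v\in\mathrm{nil}_{l}$, together with the weight-shifting identities for $X$ and $Y$ (as in the explicit computation proving Proposition \ref{2-lem-sl(2)}) and Corollary \ref{2-cor-abelian} applied to the Abelian ideal $C(\mathrm{nil}\,\mathfrak{g})$ to control the central weight vectors. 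The aim is to show that a nonzero pairing of $W$ with $\mathfrak{s}$ forces the weight vectors of a nontrivial irreducible component to span a totally isotropic subspace of $\mathbf{B}|_{\mathrm{nil}\,\mathfrak{g}}$ of dimension at least $3$, contradicting $\mathrm{Ind}\le 2$. The main obstacle is precisely the bracket terms $[u,v]\in[\mathrm{nil}\,\mathfrak{g},\mathrm{nil}\,\mathfrak{g}]$ on the right-hand sides: they couple distinct weight spaces and obstruct a direct application of Lemma \ref{2-prop-semisimple}, so the analysis must track the full weight ladder of each irreducible component and treat separately the low-dimensional ones (of dimension $2$ and $3$), for which the assembled isotropic subspace is smallest and the index bound is tightest.
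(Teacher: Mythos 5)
Your reductions and your ``key reduction'' are correct: granting $\mathbf{B}(\mathfrak{s},\mathrm{nil}\,\mathfrak{g})=0$, the symmetry of $\mathrm{ad}\,x|_{\mathrm{nil}\,\mathfrak{g}}$ plus Lemma \ref{2-prop-semisimple} does place $W=[\mathfrak{s},\mathrm{nil}\,\mathfrak{g}]$ in the radical of $\mathbf{B}|_{\mathrm{nil}\,\mathfrak{g}}$, and the index hypothesis then gives $\dim W\le 2$. But the proposal has a genuine gap: the identity $\mathbf{B}(\mathfrak{s},\mathrm{nil}\,\mathfrak{g})=0$, on which everything rests, is never proved. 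The final paragraph is only a plan for a weight-space analysis, and you yourself identify the obstruction (the bracket terms $[u,v]\in[\mathrm{nil}\,\mathfrak{g},\mathrm{nil}\,\mathfrak{g}]$ coupling distinct weight spaces) without overcoming it. As written, the argument is circular in spirit: you need $\mathbf{B}(\mathfrak{s},\mathrm{nil}\,\mathfrak{g})=0$ to make $\mathrm{ad}\,x|_{\mathrm{nil}\,\mathfrak{g}}$ symmetric, but symmetry is the tool you would want in order to prove that vanishing.

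The missing idea is to filter by the upper central series $C^{1}(\mathrm{nil}\,\mathfrak{g})\subset C^{2}(\mathrm{nil}\,\mathfrak{g})\subset\cdots$ rather than attacking $\mathrm{nil}\,\mathfrak{g}$ in one step. On $C(\mathrm{nil}\,\mathfrak{g})$ the offending bracket $[y,z]$ vanishes identically, so $\mathrm{ad}\,x|_{C(\mathrm{nil}\,\mathfrak{g})}$ is symmetric unconditionally; Lemma \ref{2-prop-semisimple} makes $[\mathfrak{s},C(\mathrm{nil}\,\mathfrak{g})]$ isotropic, the index bound forces its dimension to be at most $2$, hence every nontrivial irreducible $\mathfrak{s}$-component of $C(\mathrm{nil}\,\mathfrak{g})$ is the standard $2$-dimensional $\mathfrak{sl}(2,\mathbb{F})$-module, and Proposition \ref{2-lem-sl(2)} (together with Lemma \ref{2-lem-decomposition} for the trivial components) upgrades isotropy to $\mathbf{B}(\mathfrak{s},C(\mathrm{nil}\,\mathfrak{g}))=0$ --- no weight-ladder bookkeeping is needed. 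At the next stage $[C^{2},C^{2}]\subset C^{1}$, and the pairing $\mathbf{B}(\mathfrak{s},C^{1})$ has just been killed, so $\mathbf{B}([y,z],x)=0$ again and $\mathrm{ad}\,x|_{C^{2}}$ is symmetric; iterating up the series yields $\mathbf{B}(\mathfrak{s},\mathrm{nil}\,\mathfrak{g})=0$ and $\dim[\mathfrak{s},\mathrm{nil}\,\mathfrak{g}]\le 2$ simultaneously. This inductive device is precisely what dissolves the obstruction you flag, and it reuses the explicit $\mathfrak{sl}(2,\mathbb{F})+_{\pi}\mathbb{F}^{2}$ computation already available instead of a new case analysis of low-dimensional irreducibles.
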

\begin{proof}
Let $0\subset C^{1}(\mathrm{nil}\,\mathfrak{g})=C(\mathrm{nil}\,\mathfrak{g})\subset C^{2}(\mathrm{nil}\,\mathfrak{g})\subset\cdots\subset\mathrm{nil}\,\mathfrak{g}$ be the upper central series of $\mathrm{nil}\,\mathfrak{g}$, where  $C^{i+1}(\mathrm{nil}\,\mathfrak{g})/C^{i}(\mathrm{nil}\,\mathfrak{g})$ is the center of $\mathrm{nil}\,\mathfrak{g}/C^{i}(\mathrm{nil}\,\mathfrak{g})$. In particular, $[\mathfrak{s},C^{i}(\mathrm{nil}\,\mathfrak{g})]\subset C^{i}(\mathrm{nil}\,\mathfrak{g})$.

For all $x\in\mathfrak{s}$, $y,z\in C(\mathrm{nil}\,\mathfrak{g})$,
\begin{eqnarray*}
0&=& \mathbf{B}([y,z],x)+\mathbf{B}([z,x],y)+\mathbf{B}([x,y],z)\\
 &=& -\mathbf{B}([x,z],y)+\mathbf{B}([x,y],z),
\end{eqnarray*}
which shows that $\mathrm{ad}\, x|_{C(\mathrm{nil}\,\mathfrak{g})}$ is symmetric.
Thus by Lemma \ref{2-prop-semisimple}, we have
\begin{eqnarray*}
0&=&\mathbf{B}\left([[\mathfrak{s},\mathfrak{s}],C(\mathrm{nil}\,\mathfrak{g})],C(\mathrm{nil}\,\mathfrak{g})\right)\\
 &=&\mathbf{B}([\mathfrak{s},C(\mathrm{nil}\,\mathfrak{g})],C(\mathrm{nil}\,\mathfrak{g})).
\end{eqnarray*}
This implies that the space $[\mathfrak{s},C(\mathrm{nil}\,\mathfrak{g})]$ is isotropic  and hence $\mathrm{dim}\,[\mathfrak{s},C(\mathrm{nil}\,\mathfrak{g})]\leq 2$.
Since  $\mathfrak{s}$ is semisimple and every representation of $\mathfrak{s}$ is completely reducible,  the only non-trivial subrepresentation of $\mathfrak{s}$ on $C(\mathrm{nil}\,\mathfrak{g})$ is the natural action of $\mathfrak{sl}(2,\mathbb{F})$ on $\mathbb{F}^{2}$.
It follows from Lemma  \ref{2-lem-decomposition} and  Proposition \ref{2-lem-sl(2)}  that
$\mathbf{B}(\mathfrak{s},C(\mathrm{nil}\,\mathfrak{g}))=0$.

Furthermore, for all $x\in\mathfrak{s}$, $y,z\in C^{2}(\mathrm{nil}\,\mathfrak{g})$,
\begin{eqnarray*}
0&=& \mathbf{B}([y,z],x)+\mathbf{B}([z,x],y)+\mathbf{B}([x,y],z)\\
 &=& -\mathbf{B}([x,z],y)+\mathbf{B}([x,y],z),
\end{eqnarray*}
which shows that $\mathrm{ad}\, x|_{C^{2}(\mathrm{nil}\,\mathfrak{g})}$ is symmetric.
 By the same argument, we obtain  $\mathrm{dim}\,[\mathfrak{s},C^{2}(\mathrm{nil}\,\mathfrak{g})]\leq 2$ and
$\mathbf{B}(\mathfrak{s},C^{2}(\mathrm{nil}\,\mathfrak{g}))=0$.
Continuing this process, we eventually conclude that $\dim\,[\mathfrak{s},\mathrm{nil}\,\mathfrak{g}]\leq 2$ and $\mathbf{B}(\mathfrak{s},\mathrm{nil}\,\mathfrak{g})=0$.
Since  $[\mathfrak{s},\mathfrak{a}]=0$, it follows that
$\dim\,[\mathfrak{s},\mathfrak{r}]\leq 2$  and $\mathbf{B}(\mathfrak{s},\mathfrak{r})=0$.  This completes the proof.
\end{proof}

Theorem \ref{1-thm-main1.5} summarizes Theorem \ref{2.3-thm-sol-nondege},  Corollary \ref{2.3-cor-non-degenerate}, and the following theorem.
\begin{thm}\label{2.3-thm-lorentz}
Let $\mathfrak{g}$ be a  Lie algebra   with a Levi-decomposition $\mathfrak{g}=\mathfrak{s}+\mathfrak{r}$.
 Suppose that
 $\mathbf{B}$ is a cyclic metric on  $\mathfrak{g}$ such that
  $\mathbf{B}|_{\mathrm{nil}\,\mathfrak{g}}$ is Lorentz,
then $(\mathfrak{g},\mathbf{B})
 =(\mathfrak{s},\mathbf{B}|_{\mathfrak{s}})\oplus(\mathfrak{r},\mathbf{B}|_{\mathfrak{r}})$.
\end{thm}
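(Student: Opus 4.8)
The plan is to reduce the statement to the single assertion $[\mathfrak{s},\mathfrak{r}]=0$, after which the conclusion is immediate. Indeed, once $[\mathfrak{s},\mathfrak{r}]=0$, both $\mathfrak{s}$ and $\mathfrak{r}$ are ideals with $\mathfrak{s}\cap\mathfrak{r}=0$, so $\mathfrak{g}=\mathfrak{s}\oplus\mathfrak{r}$ is a Lie algebra direct sum with $\mathfrak{s}$ semisimple, and Corollary \ref{2-cor-twodirectsum} yields $\mathbf{B}(\mathfrak{s},\mathfrak{r})=0$ together with the desired orthogonal direct product decomposition. Since the Lorentz hypothesis is a special case of Proposition \ref{2-lem-tran-lorentz}, I may freely use that $\mathbf{B}(\mathfrak{s},\mathfrak{r})=0$ and, more importantly, that its proof exhibits each $[\mathfrak{s},C^{i}(\mathrm{nil}\,\mathfrak{g})]$ as an isotropic subspace of $C^{i}(\mathrm{nil}\,\mathfrak{g})$. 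The entire burden is therefore to upgrade the bound $\dim[\mathfrak{s},\mathfrak{r}]\leq 2$ to $[\mathfrak{s},\mathfrak{r}]=0$ in the sharper Lorentz situation.

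The decisive extra input is purely dimensional. In the Lorentz case every isotropic subspace has dimension at most $1$, so each $[\mathfrak{s},C^{i}(\mathrm{nil}\,\mathfrak{g})]$ has dimension at most $1$. On the other hand, $[\mathfrak{s},C^{i}(\mathrm{nil}\,\mathfrak{g})]=\mathfrak{s}\cdot C^{i}(\mathrm{nil}\,\mathfrak{g})$ is an $\mathfrak{s}$-submodule of $C^{i}(\mathrm{nil}\,\mathfrak{g})$, and by complete reducibility it equals the direct sum of the non-trivial irreducible constituents of $C^{i}(\mathrm{nil}\,\mathfrak{g})$. Because $\mathfrak{s}=[\mathfrak{s},\mathfrak{s}]$, every one-dimensional $\mathfrak{s}$-module is trivial, so each non-trivial irreducible has dimension at least $2$ (the smallest being the standard representation of an $\mathfrak{sl}(2,\mathbb{F})$-factor). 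Hence a non-zero $[\mathfrak{s},C^{i}(\mathrm{nil}\,\mathfrak{g})]$ would already have dimension at least $2$, contradicting the Lorentz bound; therefore $[\mathfrak{s},C^{i}(\mathrm{nil}\,\mathfrak{g})]=0$ at every stage.

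I would then run exactly the induction of Proposition \ref{2-lem-tran-lorentz} along the upper central series $0\subset C(\mathrm{nil}\,\mathfrak{g})\subset C^{2}(\mathrm{nil}\,\mathfrak{g})\subset\cdots\subset\mathrm{nil}\,\mathfrak{g}$. At each stage, the cyclic identity applied to $x\in\mathfrak{s}$ and $y,z\in C^{i}(\mathrm{nil}\,\mathfrak{g})$, together with the inductive hypothesis $\mathbf{B}(\mathfrak{s},C^{i-1}(\mathrm{nil}\,\mathfrak{g}))=0$ (which kills the term $\mathbf{B}([y,z],x)$, since $[y,z]\in C^{i-1}(\mathrm{nil}\,\mathfrak{g})$), shows that $\mathrm{ad}\,x|_{C^{i}(\mathrm{nil}\,\mathfrak{g})}$ is symmetric; Lemma \ref{2-prop-semisimple} then gives $\mathbf{B}([\mathfrak{s},C^{i}(\mathrm{nil}\,\mathfrak{g})],C^{i}(\mathrm{nil}\,\mathfrak{g}))=0$, so $[\mathfrak{s},C^{i}(\mathrm{nil}\,\mathfrak{g})]$ is isotropic. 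The dimensional argument above now forces $[\mathfrak{s},C^{i}(\mathrm{nil}\,\mathfrak{g})]=0$, and $\mathbf{B}(\mathfrak{s},C^{i}(\mathrm{nil}\,\mathfrak{g}))=0$ follows from Lemma \ref{2-lem-decomposition}, closing the induction. Exhausting the series gives $[\mathfrak{s},\mathrm{nil}\,\mathfrak{g}]=0$, and since $[\mathfrak{s},\mathfrak{a}]=0$ by the choice of the decomposition $\mathfrak{r}=\mathfrak{a}+\mathrm{nil}\,\mathfrak{g}$, we conclude $[\mathfrak{s},\mathfrak{r}]=0$. Corollary \ref{2-cor-twodirectsum} then completes the proof.

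I do not expect a genuine obstacle here, as the only delicate point is the representation-theoretic observation that a non-trivial irreducible module for a semisimple Lie algebra cannot be one-dimensional; everything else is the verbatim induction already carried out in Proposition \ref{2-lem-tran-lorentz}, now sharpened by the Lorentz index bound. The Lorentz hypothesis makes the isotropic spaces so small that the natural action of $\mathfrak{sl}(2,\mathbb{F})$ on $\mathbb{F}^{2}$, which was the source of the possible non-zero contribution in the trans-Lorentz setting, can no longer occur.
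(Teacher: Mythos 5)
Your proof is correct and follows essentially the same route as the paper: the paper likewise extracts from the proof of Proposition \ref{2-lem-tran-lorentz} that the Lorentz bound forces $\dim[\mathfrak{s},\mathfrak{r}]\leq 1$ and $\mathbf{B}(\mathfrak{s},\mathfrak{r})=0$, and then concludes $[\mathfrak{s},\mathfrak{r}]=0$. Your write-up merely makes explicit the representation-theoretic point the paper leaves implicit (a non-trivial irreducible module of a semisimple Lie algebra cannot be one-dimensional), which is a correct and welcome clarification rather than a different argument.
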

\begin{proof}
 From the proof of   Proposition \ref{2-lem-tran-lorentz}, it follows that $\dim\,[\mathfrak{s},\mathfrak{r}]\leq 1$ and
$\mathbf{B}(\mathfrak{s},\mathfrak{r})=0$.
Hence $[\mathfrak{s},\mathfrak{r}]=0$ and $(\mathfrak{g},\mathbf{B})
 =(\mathfrak{s},\mathbf{B}|_{\mathfrak{s}})\oplus(\mathfrak{r},\mathbf{B}|_{\mathfrak{r}})$
 is the orthogonal direct product.
\end{proof}

Finally, we provide an example to show that Proposition \ref{2-lem-tran-lorentz}  does not hold for cyclic metric    Lie algebras  with index $\geq3$.
\begin{example}\label{3-exam-so(3)}
Let $\mathfrak{g}=\mathfrak{so}(3,\mathbb{F})$ be the $3$-dimensional  Lie algebra consisting of skew symmetric matrices with a basis $\{\mathbf{i},\mathbf{j},\mathbf{k}\}$ given by
$$\mathbf{i}=\left(
               \begin{array}{ccc}
                 0 & 0 & 0 \\
                 0 & 0 & -1 \\
                 0 & 1 & 0 \\
               \end{array}
             \right),\quad
\mathbf{j}=\left(
               \begin{array}{ccc}
                 0 & 0 & 1 \\
                 0 & 0 & 0 \\
                 -1 & 0 & 0 \\
               \end{array}
             \right),\quad
\mathbf{k}=\left(
               \begin{array}{ccc}
                 0 & -1 & 0 \\
                 1 & 0 & 0 \\
                 0 & 0 & 0 \\
               \end{array}
             \right).$$
Clearly, it is easy to see that
$$[\mathbf{i},\mathbf{j}]=\mathbf{k},\quad [\mathbf{j},\mathbf{k}]=\mathbf{i},\quad [\mathbf{k},\mathbf{i}]=\mathbf{j}.$$
Let $\mathfrak{g}=\mathfrak{so}(3,\mathbb{F})+_{\pi}\mathbb{F}^{3}$ be the  Lie algebra defined by the natural $\mathfrak{so}(3,\mathbb{F})$-action on $\mathbb{F}^{3}$.
Denote $e_{1}=(1,0,0)^{T},e_{2}=(0,1,0)^{T}, e_{3}=(0,0,1)^{T}\in \mathbb{F}^{3}$, one has
\begin{eqnarray*}
\begin{cases}
[\mathbf{i},e_{1}]=0,\quad  [\mathbf{i},e_{2}]=e_{3},\quad [\mathbf{i},e_{3}]=-e_{2},\\
[\mathbf{j},e_{1}]=-e_{3},\quad  [\mathbf{j},e_{2}]=0,\quad [\mathbf{j},e_{3}]=e_{1},\\
[\mathbf{k},e_{1}]=e_{2},\quad  [\mathbf{k},e_{2}]=-e_{1},\quad [\mathbf{k},e_{3}]=0.
\end{cases}
\end{eqnarray*}
In fact, the natural action of $\mathfrak{so}(3,\mathbb{F})$ on $\mathbb{F}^{3}$ is equivalent to the adjoint representation of $\mathfrak{so}(3,\mathbb{F})$ via the linear map $\varphi: \mathfrak{so}(3,\mathbb{F})\rightarrow \mathbb{F}^{3}$ defined by $\varphi(\mathbf{i})=e_{1}$, $\varphi(\mathbf{j})=e_{2}$ and $\varphi(\mathbf{k})=e_{3}$.

Now let $P$ and $Q$ be two   matrices of order 3, where $P$ is symmetric, and
$\langle\cdot,\cdot\rangle_{(P,Q)}$ be a   metric on $\mathfrak{g}=\mathfrak{so}(3,\mathbb{F})+_{\pi}\mathbb{F}^{3}$ defined by the following metric matrix
\begin{eqnarray*}
M=
\left(\begin{array}{cc}
P & Q \\
Q^{T} & 0
\end{array}\right),
\end{eqnarray*}
with respect to the basis $\{\mathbf{i},\mathbf{j},\mathbf{k},e_{1},e_{2},e_{3}\}$.
A straightforward computation shows that $\langle\cdot,\cdot\rangle_{(P,Q)}$ is cyclic if and only if  the following equations are satisfied
$$\left\{
  \begin{array}{ll}
  \mathrm{tr}(P)=0,&\\
\langle[\mathbf{j},\mathbf{k}],e_{i}\rangle_{(P,Q)}+\langle[\mathbf{k},e_{i}],\mathbf{j}\rangle_{(P,Q)}+ \langle[e_{i},\mathbf{j}],\mathbf{k}\rangle_{(P,Q)}=0, &  \\
\langle[\mathbf{k},\mathbf{i}],e_{i}\rangle_{(P,Q)}+\langle[\mathbf{i},e_{i}],\mathbf{k}\rangle_{(P,Q)}+ \langle[e_{i},\mathbf{k}],\mathbf{i}\rangle_{(P,Q)}=0, &  \\
\langle[\mathbf{i},\mathbf{j}],e_{i}\rangle_{(P,Q)}+\langle[\mathbf{j},e_{i}],\mathbf{i}\rangle_{(P,Q)}+ \langle[e_{i},\mathbf{i}],\mathbf{j}\rangle_{(P,Q)}=0,\quad i=1,2,3. &
  \end{array}
\right. $$
The last three equations are equivalent to
$Q=Q^{T}$ and $\mathrm{tr}(Q)=0$.
This shows that $\langle\cdot,\cdot\rangle_{(P,Q)}$ is cyclic if and only if
$\mathrm{tr}(Q)=\mathrm{tr}(P)=0$, both $P$ and $Q$ are symmetric matrices.
Clearly,  it is non-degenerate if and only if $\det Q\neq0$.
Moreover, by Theorem \ref{2-thm-abc} and Lemma \ref{2-prop-semisimple}, one can prove that every cyclic metric on $\mathfrak{g}=\mathfrak{so}(3,\mathbb{F})+_{\pi}\mathbb{F}^{3}$ must be of the form $\langle\cdot,\cdot\rangle_{(P,Q)}$.
\end{example}

\section{Cyclic quadruples}\label{sec4}
To systematically analyze condition (c) of Theorem \ref{2-thm-abc}, we make the following definition.
\begin{defn}\label{5.1-def-cyclic-quad}
A cyclic quadruple $(\mathfrak{g},\pi,V,\rho)$ over a field $\mathbb{F}$ is a Lie algebra $\mathfrak{g}$ over
$\mathbb{F}$,  a representation $\pi:\mathfrak{g}\rightarrow \mathfrak{gl}(V,\mathbb{F})$ and a linear map $\rho:\mathfrak{g}\rightarrow V^{*}$ satisfying
\begin{eqnarray*}
\rho([x,y])+\pi^{\ast}(x)\rho(y)-\pi^{\ast}(y)\rho(x)=0,\quad \forall x,y\in\mathfrak{g},
\end{eqnarray*}
where $V^{*}$ denotes the dual space of $V$ and $\pi^{*}:\mathfrak{g}\rightarrow \mathfrak{gl}(V^{\ast},\mathbb{F})$ is the dual representation of $\pi$ defined  by
\begin{eqnarray*}
\pi^{*}(x)(f)(v)=-f(\pi(x)(v)), \quad \forall x\in \mathfrak{g}, f\in V^{*}, v\in V.
\end{eqnarray*}
In particular, a cyclic quadruple $(\mathfrak{g},\pi,V,\rho)$ is called irreducible if the representation $\pi$ is irreducible.
Moreover, it is called trivial if $\rho=0$.
\end{defn}

Now let $\mathfrak{g}$ be a  Lie algebra,  $\mathfrak{h}$ be a subalgebra of $\mathfrak{g}$ and $\mathfrak{i}$ be a subspace of $\mathfrak{g}$ satisfying $[\mathfrak{h},\mathfrak{i}]\subset \mathfrak{i}$.
Denote by $\pi: \mathfrak{h}\rightarrow \mathfrak{gl}(\mathfrak{i})$ the representation  given by the adjoint action
$$\pi(h)(i)=[h,i], \quad \forall h\in \mathfrak{h}, i\in \mathfrak{i}.$$
Suppose  that $\mathbf{B}$ is a cyclic metric on $\mathfrak{g}$ and define a linear map
$\rho: \mathfrak{h}\rightarrow \mathfrak{i}^{*}$  by
$$\rho(h)(i)=\mathbf{B}(h,i), \quad \forall h\in \mathfrak{h}, i\in \mathfrak{i}.$$
Note that $\rho=0$ if and only if $\mathbf{B}(\mathfrak{h},\mathfrak{i})=0$.
\begin{prop}\label{5-prop-decom-h-i}
Notation as above. Given a cyclic metric Lie algebra $(\mathfrak{g}, \mathbf{B})$ and a pair $(\mathfrak{h},\mathfrak{i})$,  the associated  quadruple $(\mathfrak{h},\pi,\mathfrak{i},\rho)$ is cyclic.
\end{prop}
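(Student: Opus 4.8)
The plan is to verify the defining cocycle identity of a cyclic quadruple directly, by unwinding the definitions of $\pi^{*}$ and $\rho$ and recognizing that the resulting scalar expression is precisely an instance of the cyclic cocycle condition satisfied by $\mathbf{B}$. Since the target identity
\begin{equation*}
\rho([x,y])+\pi^{\ast}(x)\rho(y)-\pi^{\ast}(y)\rho(x)=0
\end{equation*}
is an equation between elements of $\mathfrak{i}^{*}$, it suffices to evaluate both sides at an arbitrary $i\in\mathfrak{i}$ and check equality as scalars in $\mathbb{F}$.

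First I would compute the three terms evaluated at $i$. Using $\rho(h)(i)=\mathbf{B}(h,i)$, the first term is $\mathbf{B}([x,y],i)$. For the second, the definition $\pi^{*}(x)(f)(v)=-f(\pi(x)(v))$ together with $\pi(x)(i)=[x,i]$ gives $(\pi^{*}(x)\rho(y))(i)=-\rho(y)([x,i])=-\mathbf{B}(y,[x,i])$, and similarly $(\pi^{*}(y)\rho(x))(i)=-\mathbf{B}(x,[y,i])$. Hence evaluating the whole expression at $i$ produces
\begin{equation*}
\mathbf{B}([x,y],i)-\mathbf{B}(y,[x,i])+\mathbf{B}(x,[y,i]).
\end{equation*}
Here the hypothesis $[\mathfrak{h},\mathfrak{i}]\subset\mathfrak{i}$ is what makes $\pi$ well-defined as a representation into $\mathfrak{gl}(\mathfrak{i})$ and guarantees $[x,i],[y,i]\in\mathfrak{i}$, so every expression lives in the intended space.

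Next I would invoke the cyclic cocycle identity for $\mathbf{B}$ applied to $x,y,i$,
\begin{equation*}
\mathbf{B}([x,y],i)+\mathbf{B}([y,i],x)+\mathbf{B}([i,x],y)=0,
\end{equation*}
and rewrite its last two summands using the symmetry of $\mathbf{B}$ and the skew-symmetry of the bracket: $\mathbf{B}([y,i],x)=\mathbf{B}(x,[y,i])$ and $\mathbf{B}([i,x],y)=-\mathbf{B}(y,[x,i])$. Substituting shows the cyclic identity is exactly the vanishing of the expression displayed above, which completes the verification.

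There is essentially no conceptual obstacle; the entire content of the argument is the sign bookkeeping arising from the dual representation $\pi^{*}$ and from reordering the arguments of $\mathbf{B}$ via its symmetry and the skew-symmetry of the bracket. The hard part, such as it is, lies only in checking that these signs line up, after which the conclusion is immediate.
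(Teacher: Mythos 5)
Your proof is correct and is essentially the paper's argument spelled out: the paper simply cites condition (c) of Theorem \ref{2-thm-abc}, which is exactly the instance of the cyclic identity for $x,y\in\mathfrak{h}$, $i\in\mathfrak{i}$ that you verify by hand, and your sign bookkeeping through $\pi^{*}$ and the symmetry of $\mathbf{B}$ checks out.
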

\begin{proof}
It follows directly from condition (c) of Theorem \ref{2-thm-abc}.
\end{proof}
\begin{rem}
In the special case where  $\mathfrak{h}=\mathfrak{i}$, the quadruple $(\mathfrak{h}=\mathfrak{i},\pi,\mathfrak{i},\rho)$ is cyclic if and only if the restriction of metric $\mathbf{B}$ on $\mathfrak{i}$ is cyclic.
\end{rem}

Conversely, one can construct cyclic metric Lie algebras from   cyclic quadruples.
Let $(\mathfrak{g},\pi,V,\rho)$ be a  cyclic quadruple, define a new metric Lie algebra
$(L=\mathfrak{g}+_{\pi}V,\mathbf{B}_{L})$ as follows:
\begin{eqnarray*}
\left\{
\begin{aligned}
&[(x,u),(y,v)]_{L}=([x,y]_{\mathfrak{g}},\pi(x)(v)-\pi(y)(u)),\\
&\mathbf{B}_{L}((x,u),(y,v))=\mathbf{B}_{\mathfrak{g}}(x,y)+\rho(x)(v)+\rho(y)(u),
\quad \forall x,y\in \mathfrak{g}, u,v\in V,
\end{aligned}
\right.
\end{eqnarray*}
where $\mathbf{B}_{\mathfrak{g}}$ is any cyclic metric on $\mathfrak{g}$ (could be zero).
 It is clear that $V$ is an Abelian ideal of $L$. By Corollary \ref{2-cor-decom-abelian}, we easily obtain
\begin{thm}\label{5.1-thm-gV}
Notation as above. The metric Lie algebra $(L=\mathfrak{g}+_{\pi}V,\mathbf{B}_{L})$ constructed above is cyclic.
\end{thm}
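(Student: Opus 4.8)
The plan is to invoke Corollary \ref{2-cor-decom-abelian} with $\mathfrak{h}=\mathfrak{g}$ (identified with $\mathfrak{g}\times\{0\}\subset L$) and $\mathfrak{i}=V$ (identified with $\{0\}\times V$), which is an Abelian ideal of $L$; it then suffices to check the three conditions (a)--(c) stated there. Condition (a) is immediate, since the restriction of $\mathbf{B}_L$ to $\mathfrak{g}\times\{0\}$ is exactly $\mathbf{B}_{\mathfrak{g}}$, which is cyclic by hypothesis. For condition (b) I would first note that $\mathbf{B}_L((0,u),(0,v))=\mathbf{B}_{\mathfrak{g}}(0,0)+\rho(0)(v)+\rho(0)(u)=0$, so that $V$ is totally isotropic for $\mathbf{B}_L$; since the two brackets occurring in condition (b) have vanishing $\mathfrak{g}$-component and their second components lie in $V$, both pairings vanish and (b) is automatic.

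The substance lies in condition (c). For $h_1=(x,0)$, $h_2=(y,0)$, $i=(0,v)$ I would compute the three brackets $[h_1,h_2]_L=([x,y]_{\mathfrak{g}},0)$, $[h_2,i]_L=(0,\pi(y)(v))$ and $[i,h_1]_L=(0,-\pi(x)(v))$. Substituting these into $\mathbf{B}_L$ and discarding the $\mathbf{B}_{\mathfrak{g}}$-terms (each of which pairs against a zero $\mathfrak{g}$-component) leaves
$$\rho([x,y])(v)+\rho(x)(\pi(y)(v))-\rho(y)(\pi(x)(v)).$$
Using the defining relation $\pi^{*}(x)(f)(w)=-f(\pi(x)(w))$ to rewrite $\rho(x)(\pi(y)(v))=-(\pi^{*}(y)\rho(x))(v)$ and $\rho(y)(\pi(x)(v))=-(\pi^{*}(x)\rho(y))(v)$, this sum becomes $\bigl(\rho([x,y])+\pi^{*}(x)\rho(y)-\pi^{*}(y)\rho(x)\bigr)(v)$, which vanishes by the cyclic quadruple identity. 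Thus (c) holds, and Corollary \ref{2-cor-decom-abelian} yields that $\mathbf{B}_L$ is cyclic.

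The only step requiring genuine care is the sign bookkeeping in condition (c): translating $\pi$ acting on $V$ into $\pi^{*}$ acting on $V^{*}$ produces precisely the two minus signs that the cyclic quadruple condition is designed to absorb, and aligning these signs correctly is the crux of the verification. Everything else is a direct unwinding of the definitions of the bracket and the metric on $L$, together with the observation that $V$ is isotropic.
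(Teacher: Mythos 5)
Your proof is correct and follows exactly the paper's route: the paper likewise observes that $V$ is an Abelian ideal of $L$ and invokes Corollary \ref{2-cor-decom-abelian}, leaving the verification of conditions (a)--(c) implicit. Your explicit sign check in condition (c), reducing to the cyclic quadruple identity, is precisely the computation the paper omits.
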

We now illustrate this construction with a canonical example, which includes Example \ref{3-exam-so(3)}.
\begin{example}\label{4-example-cyclic}
Assume that $\mathfrak{g}$ is a Lie algebra  endowed with a cyclic metric
$\mathbf{B}$.
Let $\pi=\mathrm{ad}: \mathfrak{g}\rightarrow \mathfrak{gl}(\mathfrak{g},\mathbb{F})$
be the adjoint representation of $\mathfrak{g}$ and $\rho:\mathfrak{g}\rightarrow \mathfrak{g}^{*}$
be the linear map uniquely determined by
$$\rho(x)(y)=\mathbf{B}(x,y),\quad x,y\in \mathfrak{g}.$$
Then, for any $x,y,z\in \mathfrak{g}$, one has
\begin{eqnarray*}
   &&  \Big(\rho([x,y])+\pi^{\ast}(x)\rho(y)-\pi^{\ast}(y)\rho(x)\Big)(z)\\
   &=&  \mathbf{B}([x,y],z)-\mathbf{B}(y,[x,z])+\mathbf{B}(x,[y,z])\\
   &=& 0,
\end{eqnarray*}
which asserts that the quadruple $(\mathfrak{g},\pi=\mathrm{ad},V=\mathfrak{g},\rho)$
is cyclic.
Moreover, the resulting cyclic metric Lie algebra $(L=\mathfrak{g}+_{\mathrm{ad}}\mathfrak{g},\mathbf{B}_{L})$ is non-degenerate if and only if the map $\rho:\mathfrak{g}\rightarrow \mathfrak{g}^{*}$ is a linear isometry, if and only if the cyclic metric $\mathbf{B}$ on $\mathfrak{g}$ is non-degenerate.
\end{example}

We now turn to the study of complex cyclic quadruples $(\mathfrak{g},\pi,V,\rho)$ for semisimple $\mathfrak{g}$.
\begin{thm}\label{5-thm-com-trivial}
Let $\mathfrak{g}$ be a complex semisimple Lie algebra containing no simple factor isomorphic to $\mathfrak{sl}(2,\mathbb{C})$.
Then every cyclic quadruple $(\mathfrak{g},\pi,V,\rho)$ is trivial.
\end{thm}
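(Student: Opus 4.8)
The plan is to reduce to the case of a simple $\mathfrak g$ acting irreducibly and nontrivially, and then to run a Casimir contraction that exhibits any solution $\rho$ as a coboundary, which in turn must vanish. First, writing $\mathfrak g=\mathfrak g_1\oplus\cdots\oplus\mathfrak g_k$ as a sum of simple ideals and restricting the identity of Definition \ref{5.1-def-cyclic-quad} to each $\mathfrak g_j$ shows that $(\mathfrak g_j,\pi|_{\mathfrak g_j},V,\rho|_{\mathfrak g_j})$ is again a cyclic quadruple; since $\rho=0$ follows from $\rho|_{\mathfrak g_j}=0$ for all $j$, and since no $\mathfrak g_j$ is isomorphic to $\mathfrak{sl}(2,\mathbb C)$, it suffices to treat $\mathfrak g$ simple with $\mathfrak g\not\cong\mathfrak{sl}(2,\mathbb C)$. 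Next, by complete reducibility write $V=\bigoplus_\alpha V_\alpha$ into $\pi$-irreducible summands; as $\pi^{*}$ preserves each $V_\alpha^{*}$ the identity decouples over $\alpha$, so we may assume $\pi$ irreducible. If $\pi$ is trivial then $\pi^{*}=0$ and the identity reads $\rho([x,y])=0$, whence $\rho=0$ because $\mathfrak g=[\mathfrak g,\mathfrak g]$. Thus we are reduced to $\mathfrak g$ simple, $\mathfrak g\not\cong\mathfrak{sl}(2,\mathbb C)$, and $\pi$ nontrivial irreducible.

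The crucial elementary fact is that a coboundary solution must vanish. If $\rho(y)=\pi^{*}(y)w_0$ for a fixed $w_0\in V^{*}$, then substituting into the defining identity and using that $\pi^{*}$ is a representation gives
\[
\rho([x,y])+\pi^{*}(x)\rho(y)-\pi^{*}(y)\rho(x)=\pi^{*}([x,y])w_0+[\pi^{*}(x),\pi^{*}(y)]w_0=2\,\pi^{*}([x,y])w_0 .
\]
Since this vanishes for all $x,y$ and $\mathfrak g=[\mathfrak g,\mathfrak g]$, we obtain $\pi^{*}(\mathfrak g)w_0=0$ and hence $\rho=0$.

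To produce such a coboundary I would contract with the Casimir element. Fix dual bases $\{x_i\}$, $\{x^i\}$ of $\mathfrak g$ with respect to the Killing form, and let $C_{\pi^{*}}=\sum_i\pi^{*}(x_i)\pi^{*}(x^i)$ and $C_{\mathrm{ad}}=\sum_i\mathrm{ad}(x_i)\mathrm{ad}(x^i)$ act on the irreducible modules $V^{*}$ and $\mathfrak g$ by the scalars $c$ and $c_{\mathrm{ad}}$, respectively. Computing $C_{\pi^{*}}\rho(y)$ and applying the defining identity twice—once to rewrite $\pi^{*}(x^i)\rho(y)$ and once to rewrite $\pi^{*}(x_i)\rho([x^i,y])$—while using the symmetry of the canonical element $\sum_i x_i\otimes x^i$, a direct computation collapses the cross terms and yields
\[
(c-c_{\mathrm{ad}})\,\rho(y)=\pi^{*}(y)\,\Omega_\rho,\qquad \Omega_\rho:=\sum_i\pi^{*}(x_i)\rho(x^i).
\]
When $c\neq c_{\mathrm{ad}}$ this displays $\rho$ as a coboundary, so the previous paragraph forces $\rho=0$.

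The \textbf{main obstacle} is the resonant case $c=c_{\mathrm{ad}}$, where the identity only tells us that $\Omega_\rho$ is $\mathfrak g$-invariant in $V^{*}$ (hence $\Omega_\rho=0$) and gives no direct control of $\rho$. The decisive instance is $V\cong\mathfrak g$, the adjoint representation: identifying $V^{*}\cong\mathfrak g$ via the Killing form recasts $\rho$ as a bilinear form $\beta$ on $\mathfrak g$, and the defining identity becomes a single cyclic condition coupling the symmetric and skew parts of $\beta$; once decoupled, the symmetric part is a cyclic metric on the simple algebra $\mathfrak g\not\cong\mathfrak{sl}(2,\mathbb C)$ and is annihilated by Theorem \ref{1-thm-main1}. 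Carrying out this decoupling, and handling any further nontrivial irreducible $V$ whose Casimir scalar accidentally coincides with $c_{\mathrm{ad}}$, is the delicate point; I would settle these residual cases by a highest-weight and $\mathfrak{sl}(2)$-triple analysis in the spirit of Proposition \ref{2-lem-sl(2)}, tracing any nonvanishing of $\rho$ back precisely to the three-dimensional $\mathfrak{sl}(2,\mathbb C)$-configuration excluded by hypothesis.
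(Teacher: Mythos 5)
Your reductions (to $\mathfrak g$ simple, $\pi$ irreducible and nontrivial), your observation that coboundary solutions $\rho(y)=\pi^{*}(y)w_0$ force $\pi^{*}(\mathfrak g)w_0=0$ and hence $\rho=0$, and your Casimir identity $(c-c_{\mathrm{ad}})\rho(y)=\pi^{*}(y)\Omega_\rho$ are all correct (the last one does close, using $\sum_i\rho([x_i,[x^i,y]])=c_{\mathrm{ad}}\rho(y)$ together with the invariance of $\sum_i x_i\otimes x^i$ to evaluate $\sum_i\pi^{*}(x_i)\rho([x^i,y])=-\tfrac12 c_{\mathrm{ad}}\rho(y)$). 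But the argument is not a proof, because the resonant case $c=c_{\mathrm{ad}}$ is left as an acknowledged sketch, and this case is unavoidable: for every simple $\mathfrak g$ the adjoint representation itself has $c=c_{\mathrm{ad}}$, and there the identity only yields $\Omega_\rho=0$ and gives no control of $\rho$. For $V\cong\mathfrak g$ your plan is to split the bilinear form $\beta(x,y)=\rho(x)(y)$ into symmetric and skew parts; the identity reads $\beta([x,y],z)+\beta(x,[y,z])+\beta(y,[z,x])=0$, whose symmetric projection is totally antisymmetric in $(x,y,z)$ while the skew projection is not, so the asserted "decoupling" needs an actual argument, and even granting it, Theorem \ref{1-thm-main1} only kills the symmetric part — you give no mechanism for killing the skew part, which is exactly where the nontrivial coboundary-shaped solutions live. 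Finally, the claim that any further resonant irreducibles reduce to "the three-dimensional $\mathfrak{sl}(2,\mathbb C)$-configuration" is unsubstantiated. So the proposal has a genuine gap precisely at its self-identified main obstacle.

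For comparison, the paper's proof sidesteps all representation-theoretic case analysis: it builds the cyclic metric Lie algebra $L=\mathfrak g+_{\pi}V$ of Theorem \ref{5.1-thm-gV} with $\mathbf B_{\mathfrak g}=0$ and observes that every conjugate copy $\{(x,-\pi(x)v)\mid x\in\mathfrak g\}$ of $\mathfrak g$ inside $L$ is a semisimple subalgebra, hence isotropic for $\mathbf B_L$ by Theorem \ref{1-thm-main1} and Corollary \ref{2-cor-reductive}. This produces the extra symmetry $\pi^{*}(x)\rho(y)+\pi^{*}(y)\rho(x)=0$, which combined with the quadruple identity gives $\rho([x,y])=-2\pi^{*}(x)\rho(y)$; a Cartan/weight-space argument then forces $\rho=0$ uniformly in $V$. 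If you want to salvage your route, you must either import that extra symmetry or supply a complete treatment of every irreducible $V$ with Casimir eigenvalue equal to $c_{\mathrm{ad}}$, beginning with the adjoint module.
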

\begin{proof}
Let $\mathfrak{g}$ satisfy the hypothesis, and let
$(\mathfrak{g},\pi,V,\rho)$ be a cyclic quadruple.
Construct the cyclic metric Lie algebra
$(L=\mathfrak{g}+_{\pi}V,\mathbf{B}_{L})$ as in Theorem \ref{5.1-thm-gV}, taking $\mathbf{B}_{\mathfrak{g}}=0$.
Now for every vector $v\in V$, denote by
$$\exp^{\mathrm{ad}\,(0,v)}(\mathfrak{g},0)=\{(x,-\pi(x)(v))|x\in \mathfrak{g}\}$$
 the Lie subalgebra in $L$, which is  isomorphic to $\mathfrak{g}$.
 Thus by Theorem \ref{1-thm-main1}, the restriction of $\mathbf{B}_{L}$ to $\exp^{\mathrm{ad}\,(0,v)}(\mathfrak{g},0)$ is trivial. This means that
\begin{eqnarray*}
  0 &=& \mathbf{B}_{L}(\exp^{\mathrm{ad}\,(0,v)}(x,0),\exp^{\mathrm{ad}\,(0,v)}(y,0)) \\
   &=& \mathbf{B}_{L}((x,-\pi(x)(v)),(y,-\pi(y)(v))) \\
   &=& -\rho(x)(\pi(y)(v))-\rho(y)(\pi(x)(v)) \\
   &=&  (\pi^{*}(y)\rho(x))(v)+(\pi^{*}(x)\rho(y))(v),\quad \forall x,y\in \mathfrak{g}.
\end{eqnarray*}
Therefore $\pi^{*}(y)\rho(x)+\pi^{*}(x)\rho(y)=0$ for all $x,y\in \mathfrak{g}$.
From the defining relation of the cyclic quadruple  $\rho([x,y])+\pi^{\ast}(x)\rho(y)-\pi^{\ast}(y)\rho(x)=0$,
we deduce that
$\rho([x,y])+2\pi^{\ast}(x)\rho(y)=0$, $\forall x,y\in\mathfrak{g}$.

Now let $\mathfrak{g}=\mathfrak{h}+\sum_{\alpha\in \Delta}\mathfrak{g}_{\alpha}$
be a root space decomposition associated  to a Cartan subalgebra $\mathfrak{h}$ of
$\mathfrak{g}$, $\Delta$ is the root system. Obviously, $\pi^{\ast}(h)\rho(h')=0$ holds for all
$h,h'\in \mathfrak{h}$.
Since  $\pi^{\ast}(h)$ acts diagonally on $V^{*}$ for all $h\in \mathfrak{h}$,
it follows that  $\pi^{\ast}(\mathfrak{g})\rho(\mathfrak{h})=0$.
Then we have $\pi^{\ast}(\mathfrak{h})\rho(\mathfrak{g})=0$, which implies that
 $\pi^{\ast}(\mathfrak{g})\rho(\mathfrak{g})=0$.
Therefore  $\rho=0$, completing the proof of the theorem.
\end{proof}
\begin{thm}\label{5-thm-nondegerate-sl(2)}
Let  $(\mathfrak{g},\mathbf{B})$ be a complex  cyclic metric Lie algebra with Levi-decomposition $\mathfrak{g}=\mathfrak{s}+\mathfrak{r}$. If  $\tilde{\mathfrak{s}}\subset \mathfrak{s}$ is a simple factor  not isomorphic to $\mathfrak{sl}(2,\mathbb{C})$, then $\mathbf{B}(\tilde{\mathfrak{s}},\mathfrak{g})=0$.
\end{thm}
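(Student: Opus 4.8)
The plan is to split the claim $\mathbf{B}(\tilde{\mathfrak{s}},\mathfrak{g})=0$ into its restrictions to the semisimple Levi factor $\mathfrak{s}$ and to the radical $\mathfrak{r}$, handling each by reducing to results already established in this section. Since $\mathfrak{g}=\mathfrak{s}+\mathfrak{r}$, it suffices to show $\mathbf{B}(\tilde{\mathfrak{s}},\mathfrak{s})=0$ and $\mathbf{B}(\tilde{\mathfrak{s}},\mathfrak{r})=0$ separately.

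First I would dispose of $\mathbf{B}(\tilde{\mathfrak{s}},\mathfrak{s})$. Writing $\mathfrak{s}=\tilde{\mathfrak{s}}\oplus\mathfrak{s}'$, where $\mathfrak{s}'$ is the sum of the remaining simple factors of $\mathfrak{s}$, one has $[\tilde{\mathfrak{s}},\mathfrak{s}']=0$. Applying the ``in particular'' clause of Lemma \ref{2-lem-decomposition} with the semisimple subalgebra $\tilde{\mathfrak{s}}$ and the subspace $V=\mathfrak{s}'$ gives $\mathbf{B}(\tilde{\mathfrak{s}},\mathfrak{s}')=0$. For the diagonal block, $(\tilde{\mathfrak{s}},\mathbf{B}|_{\tilde{\mathfrak{s}}})$ is cyclic by Lemma \ref{2-lem-subalgebra}, so $\mathbf{B}|_{\tilde{\mathfrak{s}}}$ is a cyclic metric on a complex simple Lie algebra not isomorphic to $\mathfrak{sl}(2,\mathbb{C})$; by Theorem \ref{1-thm-main1} it must vanish. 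Together these yield $\mathbf{B}(\tilde{\mathfrak{s}},\mathfrak{s})=0$.

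The substantive step is $\mathbf{B}(\tilde{\mathfrak{s}},\mathfrak{r})=0$, and here I would invoke the cyclic-quadruple machinery. Since $\mathfrak{r}$ is an ideal, $[\tilde{\mathfrak{s}},\mathfrak{r}]\subset\mathfrak{r}$, so the pair $(\mathfrak{h},\mathfrak{i})=(\tilde{\mathfrak{s}},\mathfrak{r})$ fits the setup preceding Proposition \ref{5-prop-decom-h-i}: the adjoint action defines $\pi:\tilde{\mathfrak{s}}\to\mathfrak{gl}(\mathfrak{r})$ and $\rho(h)(i)=\mathbf{B}(h,i)$ defines $\rho:\tilde{\mathfrak{s}}\to\mathfrak{r}^{*}$, and Proposition \ref{5-prop-decom-h-i} asserts that $(\tilde{\mathfrak{s}},\pi,\mathfrak{r},\rho)$ is a cyclic quadruple. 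Because $\tilde{\mathfrak{s}}$ is a complex semisimple Lie algebra containing no simple factor isomorphic to $\mathfrak{sl}(2,\mathbb{C})$, Theorem \ref{5-thm-com-trivial} forces this quadruple to be trivial, i.e. $\rho=0$; and $\rho=0$ is precisely the statement $\mathbf{B}(\tilde{\mathfrak{s}},\mathfrak{r})=0$.

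Combining the two vanishing statements over $\mathfrak{g}=\mathfrak{s}+\mathfrak{r}$ then gives $\mathbf{B}(\tilde{\mathfrak{s}},\mathfrak{g})=0$. The argument is essentially a bookkeeping reduction to earlier results, so I do not expect a genuine obstacle; the only content is recognizing that $(\tilde{\mathfrak{s}},\mathrm{ad},\mathfrak{r},\rho)$ is exactly the kind of cyclic quadruple covered by Theorem \ref{5-thm-com-trivial}, which is where the hypothesis $\tilde{\mathfrak{s}}\not\cong\mathfrak{sl}(2,\mathbb{C})$ is ultimately used. The one point worth checking is that $\pi$ need not be irreducible here, but Theorem \ref{5-thm-com-trivial} is stated for an arbitrary representation $\pi$, so this causes no difficulty.
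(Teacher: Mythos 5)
Your proposal is correct and follows essentially the same route as the paper: Proposition \ref{5-prop-decom-h-i} makes $(\tilde{\mathfrak{s}},\pi,\mathfrak{r},\rho)$ a cyclic quadruple, Theorem \ref{5-thm-com-trivial} forces $\rho=0$, and Theorem \ref{1-thm-main1} handles $\mathbf{B}(\tilde{\mathfrak{s}},\mathfrak{s})$. You merely make explicit the decomposition $\mathfrak{s}=\tilde{\mathfrak{s}}\oplus\mathfrak{s}'$ and the use of Lemma \ref{2-lem-decomposition} for the off-diagonal block, which the paper leaves implicit.
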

\begin{proof}
According to Proposition \ref{5-prop-decom-h-i}, the associated quadruple $(\tilde{\mathfrak{s}},\pi,\mathfrak{r},\rho)$ is cyclic. Then by Theorem \ref{5-thm-com-trivial}, $(\tilde{\mathfrak{s}},\pi,\mathfrak{r},\rho)$ is trivial and so
$\mathbf{B}(\tilde{\mathfrak{s}},\mathfrak{r})=0$.
It follows from Theorem \ref{1-thm-main1} that $\mathbf{B}(\tilde{\mathfrak{s}},\mathfrak{s})=0$.
Hence $\mathbf{B}(\tilde{\mathfrak{s}},\mathfrak{g})=0$, which completes the proof.
\end{proof}
The analogous results over the real field are as follows.
\begin{thm}
Let $\mathfrak{g}$ be a real semisimple Lie algebra with no simple factor isomorphic to $\mathfrak{su}(2)$,
$\mathfrak{sl}(2,\mathbb{R})$ or $\mathfrak{sl}(2,\mathbb{C})$ (regarded as a real Lie algebra).
Then every cyclic quadruple $(\mathfrak{g},\pi,V,\rho)$ is trivial.
\end{thm}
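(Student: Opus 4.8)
The plan is to deduce this real statement from its complex counterpart, Theorem~\ref{5-thm-com-trivial}, by complexification. First I would form $\mathfrak{g}^{\mathbb{C}}=\mathfrak{g}\otimes_{\mathbb{R}}\mathbb{C}$ and $V^{\mathbb{C}}=V\otimes_{\mathbb{R}}\mathbb{C}$, extend $\pi$ to the $\mathbb{C}$-linear representation $\pi^{\mathbb{C}}:\mathfrak{g}^{\mathbb{C}}\rightarrow\mathfrak{gl}(V^{\mathbb{C}},\mathbb{C})$, and extend $\rho$ to the $\mathbb{C}$-linear map $\rho^{\mathbb{C}}:\mathfrak{g}^{\mathbb{C}}\rightarrow(V^{\mathbb{C}})^{\ast}$, using the canonical identification $(V^{\mathbb{C}})^{\ast}\cong(V^{\ast})^{\mathbb{C}}$, under which $(\pi^{\mathbb{C}})^{\ast}=(\pi^{\ast})^{\mathbb{C}}$. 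Since the defining identity
\[
\rho([x,y])+\pi^{\ast}(x)\rho(y)-\pi^{\ast}(y)\rho(x)=0
\]
is $\mathbb{R}$-bilinear in $(x,y)$ and $\mathbb{R}$-linear into $V^{\ast}$, it extends $\mathbb{C}$-bilinearly, so $(\mathfrak{g}^{\mathbb{C}},\pi^{\mathbb{C}},V^{\mathbb{C}},\rho^{\mathbb{C}})$ is again a cyclic quadruple, now over $\mathbb{C}$. If I can show that $\mathfrak{g}^{\mathbb{C}}$ is complex semisimple with no simple factor isomorphic to $\mathfrak{sl}(2,\mathbb{C})$, then Theorem~\ref{5-thm-com-trivial} forces $\rho^{\mathbb{C}}=0$, and restricting to $\mathfrak{g}\subset\mathfrak{g}^{\mathbb{C}}$ yields $\rho=0$, as required.

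The heart of the argument is therefore the identification of the simple factors of $\mathfrak{g}^{\mathbb{C}}$, and this is where I expect the main obstacle (conceptually, not computationally) to lie. Here I would invoke the standard classification of real simple Lie algebras: a real simple Lie algebra $\mathfrak{g}_{0}$ is either (a) a real form of a complex simple Lie algebra $\mathfrak{l}$, in which case $\mathfrak{g}_{0}\otimes_{\mathbb{R}}\mathbb{C}\cong\mathfrak{l}$ is complex simple, or (b) the realification $\mathfrak{m}^{\mathbb{R}}$ of a complex simple Lie algebra $\mathfrak{m}$, in which case $\mathfrak{g}_{0}\otimes_{\mathbb{R}}\mathbb{C}$ is a direct sum of two copies of $\mathfrak{m}$. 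Applying this factor by factor to a decomposition of $\mathfrak{g}$ into real simple ideals, an $\mathfrak{sl}(2,\mathbb{C})$-factor can appear in $\mathfrak{g}^{\mathbb{C}}$ only if some real simple factor of $\mathfrak{g}$ is a real form of $\mathfrak{sl}(2,\mathbb{C})$ (case (a)) or is the realification of $\mathfrak{sl}(2,\mathbb{C})$ (case (b)). Since the only real forms of $\mathfrak{sl}(2,\mathbb{C})$ are $\mathfrak{su}(2)$ and $\mathfrak{sl}(2,\mathbb{R})$, the complete list of real simple Lie algebras whose complexification contains an $\mathfrak{sl}(2,\mathbb{C})$-factor is exactly $\mathfrak{su}(2)$, $\mathfrak{sl}(2,\mathbb{R})$ and $\mathfrak{sl}(2,\mathbb{C})$ regarded as a real Lie algebra. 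The hypothesis excludes all three, so $\mathfrak{g}^{\mathbb{C}}$ has no $\mathfrak{sl}(2,\mathbb{C})$-factor, closing the argument.

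I would flag that the remaining work is essentially bookkeeping: checking carefully that the complexified data genuinely form a cyclic quadruple (chiefly the compatibility $(\pi^{\mathbb{C}})^{\ast}=(\pi^{\ast})^{\mathbb{C}}$) and tracking the correspondence between the three excluded real factors and the forbidden complex factor. An alternative, more self-contained route would be to mimic the proof of Theorem~\ref{5-thm-com-trivial} directly over $\mathbb{R}$, first establishing the real analogue of Theorem~\ref{1-thm-main1}—that no real simple Lie algebra outside the three listed admits a non-zero cyclic metric, again obtained by complexifying the metric—and then rerunning the symmetrization-plus-root-space computation. However, that route would force one to replace the complex root space decomposition by a restricted-root analysis, precisely the complication the complexification approach avoids; for this reason I consider the plan above the cleaner one.
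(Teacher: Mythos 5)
Your complexification argument is correct: the cyclic-quadruple identity extends $\mathbb{C}$-bilinearly to $(\mathfrak{g}\otimes_{\mathbb{R}}\mathbb{C},\pi^{\mathbb{C}},V\otimes_{\mathbb{R}}\mathbb{C},\rho^{\mathbb{C}})$, and since the only real simple Lie algebras whose complexification contains an $\mathfrak{sl}(2,\mathbb{C})$-factor are $\mathfrak{su}(2)$, $\mathfrak{sl}(2,\mathbb{R})$ and the realification of $\mathfrak{sl}(2,\mathbb{C})$, Theorem \ref{5-thm-com-trivial} forces $\rho^{\mathbb{C}}=0$ and hence $\rho=0$. The paper states this theorem without proof, but its repeated reductions of the form ``it suffices to prove the result over $\mathbb{C}$'' indicate that this complexification route is precisely the intended argument.
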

\begin{thm}
Let  $(\mathfrak{g},\mathbf{B})$ be a real  cyclic metric Lie algebra with Levi-decomposition $\mathfrak{g}=\mathfrak{s}+\mathfrak{r}$. If  $\tilde{\mathfrak{s}}\subset \mathfrak{s}$ is a simple factor not isomorphic to $\mathfrak{su}(2)$,
$\mathfrak{sl}(2,\mathbb{R})$ or $\mathfrak{sl}(2,\mathbb{C})$ (regarded as a real Lie algebra),
then $\mathbf{B}(\tilde{\mathfrak{s}},\mathfrak{g})=0$.
\end{thm}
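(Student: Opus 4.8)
The final statement is the real-field analogue of Theorem \ref{5-thm-nondegerate-sl(2)}, so the plan is to mirror that proof exactly, substituting the real version of the triviality result for cyclic quadruples in place of Theorem \ref{5-thm-com-trivial}. Let $(\mathfrak{g},\mathbf{B})$ be a real cyclic metric Lie algebra with Levi-decomposition $\mathfrak{g}=\mathfrak{s}+\mathfrak{r}$, and let $\tilde{\mathfrak{s}}\subset\mathfrak{s}$ be a simple factor not isomorphic to $\mathfrak{su}(2)$, $\mathfrak{sl}(2,\mathbb{R})$, or $\mathfrak{sl}(2,\mathbb{C})$ (as a real Lie algebra). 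Since $[\tilde{\mathfrak{s}},\mathfrak{r}]\subset\mathfrak{r}$, the pair $(\tilde{\mathfrak{s}},\mathfrak{r})$ gives, via Proposition \ref{5-prop-decom-h-i}, an associated cyclic quadruple $(\tilde{\mathfrak{s}},\pi,\mathfrak{r},\rho)$, where $\pi(h)(i)=[h,i]$ and $\rho(h)(i)=\mathbf{B}(h,i)$.

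The key step is to invoke the preceding real triviality theorem: because $\tilde{\mathfrak{s}}$ is simple and contains no factor isomorphic to $\mathfrak{su}(2)$, $\mathfrak{sl}(2,\mathbb{R})$, or $\mathfrak{sl}(2,\mathbb{C})$, the quadruple $(\tilde{\mathfrak{s}},\pi,\mathfrak{r},\rho)$ is trivial, i.e.\ $\rho=0$. By the observation that $\rho=0$ if and only if $\mathbf{B}(\tilde{\mathfrak{s}},\mathfrak{r})=0$, this yields
\begin{equation*}
\mathbf{B}(\tilde{\mathfrak{s}},\mathfrak{r})=0.
\end{equation*}

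It remains to show $\mathbf{B}(\tilde{\mathfrak{s}},\mathfrak{s})=0$. The complex proof appeals to Theorem \ref{1-thm-main1}, which asserts that no complex simple Lie algebra other than $\mathfrak{sl}(2,\mathbb{C})$ admits a non-zero cyclic metric. Over $\mathbb{R}$ one argues through complexification: the restriction $\mathbf{B}|_{\mathfrak{s}}$ is a cyclic metric on the semisimple $\mathfrak{s}$, and by Lemma \ref{2-lem-subalgebra} its further restriction to the simple ideal $\tilde{\mathfrak{s}}$ is cyclic. The complexification $\tilde{\mathfrak{s}}\otimes_{\mathbb{R}}\mathbb{C}$ is either simple (and, under our hypothesis on $\tilde{\mathfrak{s}}$, not isomorphic to $\mathfrak{sl}(2,\mathbb{C})$) or a sum of two copies of a complex simple factor other than $\mathfrak{sl}(2,\mathbb{C})$; in either case Theorem \ref{1-thm-main1} together with Corollary \ref{2-cor-twodirectsum} forces the complexified cyclic metric, hence $\mathbf{B}|_{\tilde{\mathfrak{s}}}$ itself, to vanish. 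For the cross terms $\mathbf{B}(\tilde{\mathfrak{s}},\mathfrak{s}')$ with $\mathfrak{s}'$ the complementary semisimple ideal, one uses $[\tilde{\mathfrak{s}},\mathfrak{s}']=0$ and applies Lemma \ref{2-lem-decomposition} (the semisimple case) to conclude $\mathbf{B}(\tilde{\mathfrak{s}},\mathfrak{s}')=0$. Combining these gives $\mathbf{B}(\tilde{\mathfrak{s}},\mathfrak{s})=0$, and with $\mathbf{B}(\tilde{\mathfrak{s}},\mathfrak{r})=0$ we obtain $\mathbf{B}(\tilde{\mathfrak{s}},\mathfrak{g})=0$.

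The main obstacle is the step isolating $\mathbf{B}(\tilde{\mathfrak{s}},\tilde{\mathfrak{s}})=0$ over $\mathbb{R}$, since Theorem \ref{1-thm-main1} is stated only over $\mathbb{C}$. One must verify that the complexification of a real simple Lie algebra not isomorphic to $\mathfrak{su}(2)$, $\mathfrak{sl}(2,\mathbb{R})$, or $\mathfrak{sl}(2,\mathbb{C})$ never introduces a $\mathfrak{sl}(2,\mathbb{C})$ factor — precisely the three excluded real forms are those whose complexification is $\mathfrak{sl}(2,\mathbb{C})$ — so that the excluded-factor hypothesis transfers correctly through complexification and the vanishing of the complex cyclic metric descends to the real form.
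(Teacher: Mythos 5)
Your proposal is correct and follows essentially the same route the paper intends: the theorem is stated as the real analogue of Theorem \ref{5-thm-nondegerate-sl(2)}, proved by feeding the associated cyclic quadruple $(\tilde{\mathfrak{s}},\pi,\mathfrak{r},\rho)$ from Proposition \ref{5-prop-decom-h-i} into the real triviality theorem to get $\mathbf{B}(\tilde{\mathfrak{s}},\mathfrak{r})=0$, and then killing $\mathbf{B}(\tilde{\mathfrak{s}},\mathfrak{s})$ via Theorem \ref{1-thm-main1} and Lemma \ref{2-lem-decomposition}. Your complexification argument for $\mathbf{B}|_{\tilde{\mathfrak{s}}}=0$ — observing that the three excluded real forms are exactly those whose complexification contains an $\mathfrak{sl}(2,\mathbb{C})$ factor — correctly supplies a detail the paper leaves implicit.
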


We now focus on complex irreducible cyclic quadruples  $(\mathfrak{g},\pi,V,\rho)$ where $\mathfrak{g}=\mathfrak{sl}(2,\mathbb{C})$.
Denote by $V(k)$ the irreducible $\mathfrak{sl}(2,\mathbb{C})$-module with highest weight $k\in \mathbb{N}$, $\dim V(k)=k+1$. Notation as before, let $\{H,X,Y\}$ be a basis of $\mathfrak{sl}(2,\mathbb{C})$ given by
\begin{eqnarray*}
H=\left(
                  \begin{array}{cc}
                    1 & 0 \\
                    0 & -1 \\
                  \end{array}
                \right),\quad
 X=\left(
            \begin{array}{cc}
              0 & 1 \\
              0 & 0 \\
            \end{array}
          \right),\quad
 Y=\left(
            \begin{array}{cc}
              0 & 0 \\
              1 & 0 \\
            \end{array}
          \right).
\end{eqnarray*}
Let $v_{0}$ be  a maximal vector in $V(k)$,  set  $v_{i}=\frac{1}{i}Y^{i}\cdot v_{0}$ for $0\leq i\leq k$,
and  $v_{-1}=v_{k+1}=0$. The action is given by
\begin{eqnarray*}
\begin{cases}
H\cdot v_{i}=(k-2i)v_{i},\\
Y\cdot v_{i}=(i+1)v_{i+1},\\
X\cdot v_{i}=(k-i+1)v_{i-1},\quad 0\leq i\leq k.
\end{cases}
\end{eqnarray*}
Notice that the action of $\mathfrak{sl}(2,\mathbb{C})$ on  $V(2)$ is equivalent to the adjoint representation of $\mathfrak{sl}(2,\mathbb{C})$.
If  $\mathfrak{g}=\mathfrak{sl}(2,\mathbb{C})\oplus\cdots\oplus\mathfrak{sl}(2,\mathbb{C})$
 is a direct sum of $\mathfrak{sl}(2,\mathbb{C})$ with $m$ copies, $m\geq1$, then the irreducible
 $\mathfrak{g}$-modules must be of the form $V(k_{1})\otimes\cdots\otimes V(k_{m})$, $k_{1},\ldots,k_{m}\in \mathbb{N}$.
 In other words, the irreducible action of  $\mathfrak{g}=\mathfrak{sl}(2,\mathbb{C})\oplus\cdots\oplus\mathfrak{sl}(2,\mathbb{C})$ on $V(k_{1})\otimes\cdots\otimes V(k_{m})$ is given by
 \begin{equation*}
   (x_{1},\ldots,x_{m})\cdot (u_{1}\otimes \cdots\otimes u_{m})
   =(x_{1}\cdot u_{1})\otimes \cdots\otimes u_{m}+\cdots+u_{1}\otimes \cdots\otimes (x_{m}\cdot u_{m}),
 \end{equation*}
where $x_{i}\in \mathfrak{sl}(2,\mathbb{C})$, $u_{i}\in V(k_{i})$, $i=1,\ldots,m$.

To state the main results, we need some technical lemmas.
\begin{lem}\label{5.2-lem1}
Let $(\mathfrak{g},\pi,V,\rho)$ be a complex irreducible cyclic quadruple with $\mathfrak{g}=\mathfrak{sl}(2,\mathbb{C})$. If $\mathrm{dim}\,V\neq 3$, then $\rho=0$.
\end{lem}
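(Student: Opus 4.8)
The plan is to reduce the identity defining a cyclic quadruple to an explicit linear system on the three vectors $\rho(H),\rho(X),\rho(Y)\in V^{\ast}$ and then to solve it using the weight grading of $V^{\ast}$. Since $\pi$ is irreducible with $\dim V\neq 3$, we have $V\cong V(k)$ with $k\neq 2$, and I would work in the dual weight basis $w_{0},\dots,w_{k}$ of $V^{\ast}=V(k)^{\ast}$, where $w_{i}=v_{i}^{\ast}$ has $\pi^{\ast}(H)$-weight $2i-k$. From the action table recorded above one computes $\pi^{\ast}(X)w_{i}=-(k-i)w_{i+1}$ and $\pi^{\ast}(Y)w_{i}=-i\,w_{i-1}$. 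Writing $\rho(H)=\sum_{j}h_{j}w_{j}$, $\rho(X)=\sum_{j}x_{j}w_{j}$, $\rho(Y)=\sum_{j}y_{j}w_{j}$, and evaluating the defining identity on the pairs $(H,X)$, $(H,Y)$, $(X,Y)$ (using $[H,X]=2X$, $[H,Y]=-2Y$, $[X,Y]=H$), the weight-$j$ components give three scalar families, schematically $(2+2j-k)x_{j}+(k-j+1)h_{j-1}=0$, $(-2+2j-k)y_{j}+(j+1)h_{j+1}=0$, and $h_{j}-(k-j+1)y_{j-1}+(j+1)x_{j+1}=0$.

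The second step is elimination. The first two families express $x_{j}$ and $y_{j}$ in terms of $h_{j-1}$ and $h_{j+1}$ respectively, as long as the scalar factors $2+2j-k$ and $-2+2j-k$ do not vanish. Substituting these into the third family collapses it, at each admissible weight $j$, to a single equation of the form $(1-c)\,h_{j}=0$. The decisive computation is that the condition $c=1$ is equivalent to the factorization $(k-2)(k+4)=0$; since $k\in\mathbb{N}$ and $k\neq 2$, we get $c\neq 1$, forcing $h_{j}=0$, and then $x_{j}=y_{j}=0$ at every nondegenerate index. This is the heart of the argument and exactly the point where the hypothesis $\dim V\neq 3$ enters.

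Conceptually, the reason the answer is so clean is that the assignment $\rho\mapsto\bigl((x,y)\mapsto \rho([x,y])+\pi^{\ast}(x)\rho(y)-\pi^{\ast}(y)\rho(x)\bigr)$ is an $\mathfrak{sl}(2,\mathbb{C})$-module homomorphism $T\colon \mathrm{Hom}(\mathfrak{g},V^{\ast})\to\mathrm{Hom}(\wedge^{2}\mathfrak{g},V^{\ast})$ (it equals the Chevalley--Eilenberg differential corrected by twice the precomposition with the bracket, both of which are equivariant). Both source and target are isomorphic to the multiplicity-free module $V(2)\otimes V(k)$, so $\ker T$ is a sum of whole Clebsch--Gordan summands and is determined by the finitely many scalars by which $T$ acts on them; the weight computation above simply records that none of these scalars vanishes unless $k=2$. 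One could therefore alternatively evaluate $T$ on the highest-weight vector of each summand $V(k-2),V(k),V(k+2)$, which localizes the arithmetic to three numbers.

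The main obstacle is the bookkeeping at the extreme and degenerate weights. The boundary relations at $j=0$ and $j=k$ read $(2-k)x_{0}=0$ and $(k-2)y_{k}=0$, which already force $x_{0}=y_{k}=0$ precisely because $k\neq 2$; and when $k$ is even the two interior indices $j=(k-2)/2$ and $j=(k+2)/2$ make a denominator vanish, so there the relevant $x$- or $y$-coefficient is not pinned down by the first two families. I would clear these finitely many coefficients separately, recovering their vanishing from the $(X,Y)$-relation at the neighbouring weights and again using $k\neq 2$ to ensure the occurring factors $(k\mp 2)/2$ are nonzero. Once all $h_{j},x_{j},y_{j}$ vanish we obtain $\rho=0$, completing the proof.
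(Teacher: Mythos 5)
Your proposal is correct and follows essentially the same route as the paper: expand $\rho(H),\rho(X),\rho(Y)$ in a weight basis of $V^{\ast}$, evaluate the cyclic identity on the pairs $(H,X)$, $(H,Y)$, $(X,Y)$ to get three families of recurrences, and eliminate to find that the only obstruction to $\rho=0$ is the factor $(k-2)(k+4)$ (the paper works in a basis of $V^{\ast}\cong V(k)$ with the standard raising/lowering action rather than the literal dual basis, and disposes of $\dim V=1,2$ by citing earlier results instead of running the recurrence, but these are cosmetic differences). Your Clebsch--Gordan remark explaining \emph{why} only three scalars matter is a nice conceptual addition not present in the paper, and your treatment of the boundary and degenerate weight indices checks out.
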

\begin{proof}
Since $\pi:\mathfrak{sl}(2,\mathbb{C})\rightarrow \mathfrak{gl}(V,\mathbb{C})$ is irreducible,
its dual representation $\pi^{*}:\mathfrak{sl}(2,\mathbb{C})\rightarrow \mathfrak{gl}(V^{*},\mathbb{C})$ is also  irreducible. The cases $\dim V=1$ and $\dim V=2$ follow directly from Corollary \ref{2-cor-reductive} and Proposition \ref{2-lem-sl(2)}, respectively.
Now suppose that $V^{*}=V(k)$ for some $k\geq 3$, and let  $v_{0}\in V^{*}$ be a maximal vector.
Write the images of the basis elements under $\rho$ as
\begin{eqnarray*}
\rho(H)=\sum_{i=0}^{k}h_{i}v_{i},\quad  \rho(X)=\sum_{i=0}^{k}x_{i}v_{i}, \quad \rho(Y)=\sum_{i=0}^{k}y_{i}v_{i},\quad h_{i},x_{i},y_{i}\in \mathbb{C}.
\end{eqnarray*}
By Definition \ref{5.1-def-cyclic-quad},
\begin{eqnarray*}
\rho([x,y])+\pi^{*}(x)\rho(y)-\pi^{*}(y)\rho(x)=0, \forall x,y\in \mathfrak{sl}(2,\mathbb{C}),
\end{eqnarray*}
we derive the following three equations by evaluating on the pairs $(X,Y)$, $(H,X)$ and $(H,Y)$, respectively.
\begin{eqnarray*}
\begin{cases}
\rho(H)+\pi^{*}(X)\rho(Y)-\pi^{*}(Y)\rho(X)=0, \quad \textcircled{1}\\
2\rho(X)+\pi^{*}(H)\rho(X)-\pi^{*}(X)\rho(H)=0,\quad  \textcircled{2} \\
-2\rho(Y)+\pi^{*}(H)\rho(Y)-\pi^{*}(Y)\rho(H)=0.\quad  \textcircled{3}
\end{cases}
\end{eqnarray*}

A direct computation yields the following system of recurrence relations.
\begin{eqnarray}\label{5-formu-1}
\textcircled{1}&\Leftrightarrow & \sum_{i=0}^{k}h_{i}v_{i}+\sum_{i=0}^{k}(k-i+1)y_{i}v_{i-1}-\sum_{i=0}^{k}(i+1)x_{i}v_{i+1}=0\nonumber\\
               &\Leftrightarrow & \begin{cases}
                                    h_{0}+ky_{1}=0,\\
                                    h_{i}+(k-i)y_{i+1}-ix_{i-1}=0,\quad  1\leq i\leq k-1,\\
                                     h_{k}-kx_{k-1}=0.
                                   \end{cases}
\end{eqnarray}

\begin{eqnarray}\label{5-formu-2}
\textcircled{2}&\Leftrightarrow & 2\sum_{i=0}^{k}x_{i}v_{i}+\sum_{i=0}^{k}(k-2i)x_{i}v_{i}-\sum_{i=0}^{k}(k-i+1)h_{i}v_{i-1}=0\nonumber\\
               &\Leftrightarrow & \begin{cases}
                                     2x_{i}+(k-2i)x_{i}-(k-i)h_{i+1}=0, \quad 0\leq i\leq k-1,\\
                                     2x_{k}+(-k)x_{k}=0.
                                   \end{cases}
\end{eqnarray}

\begin{eqnarray}\label{5-formu-3}
\textcircled{3}&\Leftrightarrow & -2\sum_{i=0}^{k}y_{i}v_{i}+\sum_{i=0}^{k}(k-2i)y_{i}v_{i}-\sum_{i=0}^{k}(i+1)h_{i}v_{i+1}=0\nonumber\\
               &\Leftrightarrow & \begin{cases}
                                     -2y_{0}+ky_{0}=0,\\
                                     -2y_{i}+(k-2i)y_{i}-ih_{i-1}=0, \quad 1\leq i\leq k.
                                     \end{cases}
\end{eqnarray}
From \eqref{5-formu-1}, we have
\begin{eqnarray*}
\begin{cases}
h_{0}=-ky_{1},\quad h_{k}=kx_{k-1},\\
h_{i}=ix_{i-1}-(k-i)y_{i+1}, \quad 1\leq i\leq k-1.
 \end{cases}
\end{eqnarray*}
Substitute into \eqref{5-formu-2} and \eqref{5-formu-3}, we obtain
\begin{eqnarray*}
 \begin{cases}
 (k+2-2i)x_{i}-(k-i)\Big{[}(i+1)x_{i}-(k-i-1)y_{i+2}\Big{]}=0, \quad 0\leq i\leq k-2,\\
 (-k+4)x_{k-1}-kx_{k-1}=0,\\
 x_{k}=0,
 \end{cases}
\end{eqnarray*}
and
\begin{eqnarray*}
\begin{cases}
y_{0}=0,\\
(k-4)y_{1}+ky_{1}=0,\\
(k-2-2i)y_{i}-i\Big{[}(i-1)x_{i-2}-(k-i+1)y_{i}\Big{]}=0, \quad 2\leq i\leq k.
\end{cases}
\end{eqnarray*}
Solving the above equations, one has
$$x_{i}=y_{i}=0, \quad  0\leq i\leq k.$$
Consequently, $h_{i}=0$, $ 0\leq i\leq k$ and thus $\rho=0$.
\end{proof}
\begin{lem}\label{5.2-lem2}
Let $(\mathfrak{g},\pi,V,\rho)$ be a complex irreducible cyclic quadruple with $\mathfrak{g}=\mathfrak{sl}(2,\mathbb{C})\oplus\mathfrak{sl}(2,\mathbb{C})$
and  $V\cong V(2)\otimes V(2)$.  Then $\rho=0$.
\end{lem}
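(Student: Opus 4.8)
The plan is to avoid the nine-dimensional analogue of the weight-basis recurrence computation used in Lemma \ref{5.2-lem1}, and instead exploit the two-factor structure of $\mathfrak{g}=\mathfrak{sl}(2,\mathbb{C})\oplus\mathfrak{sl}(2,\mathbb{C})$ together with the cyclic metric Lie algebra attached to the quadruple. Write $A=\mathfrak{sl}(2,\mathbb{C})\oplus 0$ and $B=0\oplus\mathfrak{sl}(2,\mathbb{C})$ for the two simple ideals of $\mathfrak{g}$, and form $(L=\mathfrak{g}+_{\pi}V,\mathbf{B}_{L})$ as in Theorem \ref{5.1-thm-gV}, taking $\mathbf{B}_{\mathfrak{g}}=0$ so that $\mathbf{B}_{L}((x,u),(y,v))=\rho(x)(v)+\rho(y)(u)$.

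First I would, for each $v\in V$, consider the graph subalgebra $\mathfrak{d}_{v}=\{(x,-\pi(x)v):x\in\mathfrak{g}\}$. Exactly as in the proof of Theorem \ref{5-thm-com-trivial}, the map $x\mapsto(x,-\pi(x)v)$ is a Lie algebra isomorphism of $\mathfrak{g}$ onto $\mathfrak{d}_{v}$, and since it carries ideals to ideals it identifies $A$ and $B$ with ideals $\mathfrak{d}_{v}^{A},\mathfrak{d}_{v}^{B}$ of $\mathfrak{d}_{v}$ with $\mathfrak{d}_{v}=\mathfrak{d}_{v}^{A}\oplus\mathfrak{d}_{v}^{B}$. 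By Lemma \ref{2-lem-subalgebra} the restriction $\mathbf{B}_{L}|_{\mathfrak{d}_{v}}$ is cyclic, and as $\mathfrak{d}_{v}$ is a direct sum of two semisimple ideals, Corollary \ref{2-cor-twodirectsum} gives $\mathbf{B}_{L}(\mathfrak{d}_{v}^{A},\mathfrak{d}_{v}^{B})=0$.

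Next I would extract two linear identities and combine them. Unwinding $\mathbf{B}_{L}$, the orthogonality above reads, for all $x\in A$, $y\in B$ and $v\in V$, as $(\pi^{*}(x)\rho(y))(v)+(\pi^{*}(y)\rho(x))(v)=0$, that is $\pi^{*}(x)\rho(y)+\pi^{*}(y)\rho(x)=0$. On the other hand, since $A$ and $B$ commute we have $[x,y]=0$, so the defining relation of the cyclic quadruple in Definition \ref{5.1-def-cyclic-quad} specializes to $\pi^{*}(x)\rho(y)-\pi^{*}(y)\rho(x)=0$. Adding and subtracting these identities yields $\pi^{*}(x)\rho(y)=\pi^{*}(y)\rho(x)=0$ for all $x\in A$, $y\in B$. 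Finally, the dual module $V^{*}$ is again of the form $V(2)\otimes V(2)$, so restricting the $A$-action it is a sum of copies of the nontrivial irreducible $V(2)$ and hence has no nonzero $A$-fixed vector, i.e. $(V^{*})^{A}=0$, and symmetrically $(V^{*})^{B}=0$. The relation $\pi^{*}(x)\rho(y)=0$ for all $x\in A$ forces $\rho(y)\in(V^{*})^{A}=0$, whence $\rho|_{B}=0$; likewise $\rho|_{A}=0$, and since $\mathfrak{g}=A\oplus B$ we conclude $\rho=0$.

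The crux, and the step I would be most careful about, is the simultaneous use of the metric orthogonality and the Lie-theoretic commuting of the two factors: each alone is insufficient, since within a single $\mathfrak{sl}(2,\mathbb{C})$ factor the adjoint case $V(2)$ genuinely supports nonzero $\rho$ (this is precisely the $\dim V=3$ exception of Lemma \ref{5.2-lem1}), but together they collapse the cross terms $\pi^{*}(A)\rho(B)$ and $\pi^{*}(B)\rho(A)$. Everything then hinges on the elementary fact that $V^{*}$ has no $A$- or $B$-invariants, so the nine-dimensional recurrences are never needed; the only bookkeeping is verifying the ideal splitting $\mathfrak{d}_{v}=\mathfrak{d}_{v}^{A}\oplus\mathfrak{d}_{v}^{B}$ that lets me apply Corollary \ref{2-cor-twodirectsum} factorwise.
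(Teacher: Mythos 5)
Your proof is correct, and it takes a genuinely different route from the paper's. The paper identifies $V^{*}$ with $\mathfrak{sl}(2,\mathbb{C})\otimes\mathfrak{sl}(2,\mathbb{C})$ carrying $\pi^{*}=\mathrm{ad}\otimes\mathrm{ad}$ and runs a seven-step elimination, repeatedly feeding pairs of basis elements into the cyclic-quadruple relation until every component of $\rho(H,0),\rho(X,0),\dots$ is forced to vanish. You instead transplant the graph-subalgebra trick from the proof of Theorem \ref{5-thm-com-trivial} but apply it \emph{factorwise}: since $\mathfrak{d}_{v}^{A}$ and $\mathfrak{d}_{v}^{B}$ are commuting ideals of $\mathfrak{d}_{v}$ with $\mathfrak{d}_{v}^{A}$ semisimple, Lemma \ref{2-lem-decomposition} (equivalently Corollary \ref{2-cor-twodirectsum} applied to $(\mathfrak{d}_{v},\mathbf{B}_{L}|_{\mathfrak{d}_{v}})$) yields the symmetric identity $\pi^{*}(x)\rho(y)+\pi^{*}(y)\rho(x)=0$ for $x\in A$, $y\in B$, while $[A,B]=0$ turns the defining relation of the quadruple into the antisymmetric identity $\pi^{*}(x)\rho(y)-\pi^{*}(y)\rho(x)=0$; together these give $\pi^{*}(A)\rho(B)=\pi^{*}(B)\rho(A)=0$, and $(V^{*})^{A}=(V^{*})^{B}=0$ finishes the argument. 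Each step checks out: the graph map is a Lie algebra isomorphism onto $\mathfrak{d}_{v}$ exactly as in Theorem \ref{5-thm-com-trivial}, the unwinding of $\mathbf{B}_{L}$ matches the sign conventions of $\pi^{*}$, and $V^{*}\cong V(2)\otimes V(2)$ restricted to either factor is a sum of copies of $V(2)$ with no invariants. What your approach buys is brevity and coordinate-freeness, plus a strictly stronger conclusion: the argument only uses that both factors act nontrivially on the irreducible $V$, so it simultaneously disposes of Case 3 of Lemma \ref{5.2-lem3} ($V(2)\otimes V(k)$, $k\geq 1$, $k\neq 2$) and would shorten that lemma as well. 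What the paper's computation buys is that it stays entirely inside the quadruple axioms without invoking the auxiliary metric Lie algebra $L$, at the cost of being tied to the specific module $V(2)\otimes V(2)$.
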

\begin{proof}
Under the assumption, $\pi^{\ast}$ is isomorphic to $\pi$.
We may identify $V^{\ast}$ with $\mathfrak{sl}(2,\mathbb{C})\otimes\mathfrak{sl}(2,\mathbb{C})$, equipped with the representation $\pi^{\ast}=\mathrm{ad}\otimes \mathrm{ad}$, defined by
$$\pi^{\ast}(x,y)(u\otimes v)=[x,u]\otimes v+u\otimes [y,v],$$
$\forall (x,y)\in \mathfrak{sl}(2,\mathbb{C})\oplus\mathfrak{sl}(2,\mathbb{C})$, $u\otimes v\in \mathfrak{sl}(2,\mathbb{C})\otimes \mathfrak{sl}(2,\mathbb{C})$.
We proceed in seven steps to show that $\rho=0$.

Step 1. Noting that
\begin{eqnarray*}
\pi^{\ast}((X,0))\rho((0,H))=\pi^{\ast}((0,H))\rho((X,0))\in \mathfrak{sl}(2,\mathbb{C}) \otimes X+\mathfrak{sl}(2,\mathbb{C})\otimes Y,
\end{eqnarray*}
we obtain
$$\rho((0,H))\in \mathfrak{sl}(2,\mathbb{C}) \otimes X+\mathfrak{sl}(2,\mathbb{C})\otimes Y,$$
and similarly
$$\rho((H,0))\in X\otimes\mathfrak{sl}(2,\mathbb{C}) +Y\otimes\mathfrak{sl}(2,\mathbb{C}).$$

Step 2. Noting that
\begin{eqnarray*}
2\rho((X,0))+\pi^{\ast}((H,0))\rho((X,0))=\pi^{\ast}((X,0))\rho((H,0))\in H\otimes \mathfrak{sl}(2,\mathbb{C}),
\end{eqnarray*}
we obtain
\begin{eqnarray*}
\rho((X,0))\in Y\otimes\mathfrak{sl}(2,\mathbb{C}) +H\otimes\mathfrak{sl}(2,\mathbb{C}),
\end{eqnarray*}
and similarly
\begin{eqnarray*}
\begin{cases}
\rho((Y,0))\in X\otimes\mathfrak{sl}(2,\mathbb{C}) +H\otimes\mathfrak{sl}(2,\mathbb{C}),\\
\rho((0,X))\in \mathfrak{sl}(2,\mathbb{C}) \otimes Y+\mathfrak{sl}(2,\mathbb{C})\otimes H,\\
\rho((0,Y))\in \mathfrak{sl}(2,\mathbb{C}) \otimes X+\mathfrak{sl}(2,\mathbb{C})\otimes H.
\end{cases}
\end{eqnarray*}

Step 3. By  comparing terms from  step 1 and step 2, we have
\begin{eqnarray*}
\pi^{\ast}((X,0))\rho((0,H))\in \mathbb{C} X\otimes X+\mathbb{C} H\otimes X+\mathbb{C} X\otimes Y+\mathbb{C} H\otimes Y
\end{eqnarray*}
and
\begin{eqnarray*}
\pi^{\ast}((0,H))\rho((X,0))\in \mathbb{C} Y\otimes X+\mathbb{C} Y\otimes Y+\mathbb{C} H\otimes X+\mathbb{C} H\otimes Y.
\end{eqnarray*}
As $\pi^{\ast}((X,0))\rho((0,H))=\pi^{\ast}((0,H))\rho((X,0))$, one has
\begin{eqnarray*}
\rho((0,H))\in \mathbb{C} X\otimes X+\mathbb{C} Y\otimes X+\mathbb{C} X\otimes Y+\mathbb{C} Y\otimes Y,
\end{eqnarray*}
and
\begin{eqnarray*}
\rho((X,0))\in \mathbb{C}Y\otimes H+H\otimes\mathfrak{sl}(2,\mathbb{C}).
\end{eqnarray*}
Similarly, we obtain
\begin{eqnarray*}
\rho((H,0))\in \mathbb{C} X\otimes X+\mathbb{C} Y\otimes X+\mathbb{C} X\otimes Y+\mathbb{C} Y\otimes Y,
\end{eqnarray*}
and
\begin{eqnarray*}
\rho((0,X))\in  \mathbb{C} H\otimes Y+\mathfrak{sl}(2,\mathbb{C})\otimes H.
\end{eqnarray*}

Step 4. By step 3,
\begin{eqnarray*}
2\rho((X,0))+\pi^{\ast}((H,0))\rho((X,0))=\pi^{\ast}((X,0))\rho((H,0))\in \mathbb{C}H\otimes X+\mathbb{C}H\otimes Y,
\end{eqnarray*}
we obtain
$$\rho((X,0))\in \mathbb{C}Y\otimes H+\mathbb{C}H\otimes X+\mathbb{C}H\otimes Y.$$
Similarly,
\begin{eqnarray*}
\begin{cases}
\rho((Y,0))\in \mathbb{C}X\otimes H+\mathbb{C}H\otimes X+\mathbb{C}H\otimes Y,\\
\rho((0,X))\in \mathbb{C}H\otimes Y+\mathbb{C}X\otimes H+\mathbb{C}Y\otimes H, \\
\rho((0,Y))\in \mathbb{C}H\otimes X+\mathbb{C}X\otimes H+\mathbb{C}Y\otimes H.
\end{cases}
\end{eqnarray*}

Step 5. By step 4, one has
\begin{eqnarray*}
\begin{cases}
\pi^{\ast}((0,Y))\rho((X,0))\in \mathbb{C}Y\otimes Y+\mathbb{C}H\otimes H,\\
\pi^{\ast}((X,0))\rho((0,Y))\in \mathbb{C}X\otimes X+\mathbb{C}H\otimes H.
\end{cases}
\end{eqnarray*}
As $\pi^{\ast}((0,Y))\rho((X,0))=\pi^{\ast}((X,0))\rho((0,Y))$, we obtain
\begin{eqnarray*}
\begin{cases}
\rho((X,0))\in \mathbb{C} H\otimes X+\mathbb{C} H\otimes Y,\\
\rho((0,Y))\in \mathbb{C} X\otimes H+\mathbb{C} Y\otimes H.
\end{cases}
\end{eqnarray*}
Similarly,
\begin{eqnarray*}
\begin{cases}
\rho((0,X))\in \mathbb{C} X\otimes H+\mathbb{C} Y\otimes H,\\
\rho((Y,0))\in \mathbb{C} H\otimes X+\mathbb{C} H\otimes Y.
\end{cases}
\end{eqnarray*}

Step 6. By step 5, assume that
\begin{eqnarray*}
\begin{cases}
\rho((X,0))=a H\otimes X+b H\otimes Y,\\
\rho((Y,0))= c H\otimes X+d H\otimes Y, \quad  a,b,c,d\in \mathbb{C}.
\end{cases}
\end{eqnarray*}
We obtain
\begin{eqnarray*}
\rho((H,0))&=&\rho([(X,0),(Y,0)])\\
           &=&-\pi^{\ast}((X,0))\rho((Y,0))+\pi^{\ast}((Y,0))\rho((X,0))\\
           &=& 2cX\otimes X+2dX\otimes Y-2aY\otimes X-2bY\otimes Y.
\end{eqnarray*}
Consequently,
\begin{eqnarray*}
0 &=& \rho([(H,0),(X,0)])+\pi^{\ast}((H,0))\rho((X,0))-\pi^{\ast}((X,0))\rho((H,0))\\
  &=& 2\rho((X,0))+2aH\otimes X+2bH\otimes Y\\
  &=& 4aH\otimes X+4bH\otimes Y,
\end{eqnarray*}
which implies that $a=b=0$. Similarly, we have $\rho((0,X))=\rho((X,0))=0$.

Step 7. Finally, by step 6,
\begin{eqnarray*}
0 &=& \pi^{\ast}((Y,0))\rho((0,X))=\pi^{\ast}((0,X))\rho((Y,0))\\
  &=& d H\otimes H,
\end{eqnarray*}
one has $d=0$. Thus $\rho((H,0))=2c X\otimes X$. As
\begin{eqnarray*}
\pi^{\ast}((H,0))\rho((0,H))=\pi^{\ast}((0,H))\rho((H,0))=4cX\otimes X,
\end{eqnarray*}
we have $\rho((0,H))=2cX\otimes X$.

Now
\begin{eqnarray*}
0 &=& \pi^{\ast}((0,H))\rho((Y,0))-\pi^{\ast}((Y,0))\rho((0,H))\\
  &=& 2cH\otimes X+2cH\otimes X\\
  &=& 4cH\otimes X,
\end{eqnarray*}
one has $c=0$ and $\rho((Y,0))=\rho((H,0))=0$. Similarly, we have $\rho((0,Y))=\rho((0,H))=0$.
Therefore $\rho=0$, which completes the proof of the lemma.
\end{proof}
\begin{lem}\label{5.2-lem3}
Let $(\mathfrak{g},\pi,V,\rho)$ be a complex irreducible cyclic quadruple with $\mathfrak{g}=\mathfrak{sl}(2,\mathbb{C})\oplus\mathfrak{sl}(2,\mathbb{C})$.
If $\dim V\neq 3$, then $\rho=0$.
\end{lem}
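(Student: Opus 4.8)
The plan is to reduce everything to the single-factor Lemma \ref{5.2-lem1} by restricting the quadruple to each simple summand, and to handle the exceptional dimension-$3$ behaviour by exploiting the fact that the two copies of $\mathfrak{sl}(2,\mathbb{C})$ commute. Since the quadruple is irreducible, I would write $\mathfrak{g}=\mathfrak{g}_1\oplus\mathfrak{g}_2$ with $\mathfrak{g}_i\cong\mathfrak{sl}(2,\mathbb{C})$ and $V\cong V(k_1)\otimes V(k_2)$, so that $\dim V=(k_1+1)(k_2+1)\neq 3$. First I would restrict the whole datum to $\mathfrak{g}_1$: since $[\mathfrak{g}_1,\mathfrak{g}_1]=\mathfrak{g}_1$, the defining identity of Definition \ref{5.1-def-cyclic-quad} survives, so $(\mathfrak{g}_1,\pi|_{\mathfrak{g}_1},V,\rho|_{\mathfrak{g}_1})$ is again a cyclic quadruple. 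As a $\mathfrak{g}_1$-module, $V^{\ast}$ is a direct sum of $k_2+1$ irreducible copies of the $(k_1+1)$-dimensional module, and each $\mathfrak{g}_1$-equivariant projection commutes with $\pi^{\ast}(\mathfrak{g}_1)$; hence projecting $\rho|_{\mathfrak{g}_1}$ onto a summand produces an irreducible cyclic quadruple for a single copy of $\mathfrak{sl}(2,\mathbb{C})$ of dimension $k_1+1$. Thus, if $k_1\neq 2$, Lemma \ref{5.2-lem1} forces each projection, and so $\rho|_{\mathfrak{g}_1}$, to vanish; symmetrically $\rho|_{\mathfrak{g}_2}=0$ whenever $k_2\neq 2$.

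Two of the cases then fall out at once. If $k_1\neq 2$ and $k_2\neq 2$, both restrictions vanish, and since $\rho$ is linear on $\mathfrak{g}_1\oplus\mathfrak{g}_2$ this gives $\rho=0$. If $k_1=k_2=2$, then $V\cong V(2)\otimes V(2)$ and the desired conclusion is precisely Lemma \ref{5.2-lem2}, which was proved exactly for this purpose.

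The remaining case, in which exactly one weight equals $2$, say $k_1=2$ and $k_2\neq 2$, is where the single-factor lemma is powerless on $\mathfrak{g}_1$, and I expect this to be the main obstacle; here the idea is to invoke the commuting factor $\mathfrak{g}_2$. Because $\dim V=3(k_2+1)\neq 3$ we must have $k_2\geq 1$, so the previous step already yields $\rho|_{\mathfrak{g}_2}=0$. For $x\in\mathfrak{g}_1$ and $y\in\mathfrak{g}_2$ one has $[x,y]=0$, so the cyclic identity degenerates to $\pi^{\ast}(y)\rho(x)=\pi^{\ast}(x)\rho(y)=0$; hence $\rho(x)$ lies in the space of $\mathfrak{g}_2$-invariants $(V^{\ast})^{\mathfrak{g}_2}=V(2)^{\ast}\otimes (V(k_2)^{\ast})^{\mathfrak{g}_2}$. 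Since $k_2\geq 1$, the nontrivial irreducible $\mathfrak{sl}(2,\mathbb{C})$-module $V(k_2)^{\ast}$ carries no nonzero invariants, so $(V^{\ast})^{\mathfrak{g}_2}=0$ and therefore $\rho|_{\mathfrak{g}_1}=0$. Combined with $\rho|_{\mathfrak{g}_2}=0$ this forces $\rho=0$ in the last case and completes the argument. The delicate point throughout is ensuring that the projected data genuinely form cyclic quadruples, and that in the final case the cross relations together with the absence of $\mathfrak{g}_2$-invariants are exactly strong enough to replace the missing single-factor vanishing.
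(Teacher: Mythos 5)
Your proof is correct and follows the same three-way case split as the paper: both of you dispose of the case $k_1,k_2\neq 2$ by restricting to each simple summand and projecting onto $\mathfrak{g}_i$-irreducible pieces so that Lemma \ref{5.2-lem1} applies, and both quote Lemma \ref{5.2-lem2} verbatim for $V(2)\otimes V(2)$. The only divergence is in the mixed case $V\cong V(2)\otimes V(k_2)$, $k_2\neq 2$: the paper builds the auxiliary cyclic metric Lie algebra $L=\mathfrak{g}+_{\pi}V$ of Theorem \ref{5.1-thm-gV} and extracts $\mathbf{B}_L((\mathfrak{sl}(2,\mathbb{C}),0),V)=0$ from a bracket computation with $[(x,y),(x,z)]_L$, whereas you stay inside the quadruple formalism, observing that for commuting $x\in\mathfrak{g}_1$, $y\in\mathfrak{g}_2$ the defining identity collapses to $\pi^{\ast}(y)\rho(x)=\pi^{\ast}(x)\rho(y)=0$, so that $\rho(x)\in (V^{\ast})^{\mathfrak{g}_2}$, which vanishes because $k_2\geq 1$ (this is exactly where $\dim V\neq 3$ enters, and you flag it correctly). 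The two arguments rest on the same facts --- vanishing of $\rho$ on the non-exceptional factor plus nontriviality of the $\mathfrak{g}_2$-action --- but yours is the more direct and arguably cleaner execution, avoiding the detour through $(L,\mathbf{B}_L)$; your explicit justification that the projected data remain cyclic quadruples also fills in a step the paper leaves implicit in its Case 1.
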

\begin{proof}
Suppose that $V=V(k_{1})\otimes V(k_{2})$ for some $k_{1},k_{2}\in \mathbb{N}$.

Case 1. $k_{1}\neq2$ and $k_{2}\neq2$. It follows from Lemma \ref{5.2-lem1}.

Case 2. $k_{1}=k_{2}=2$. It follows from Lemma \ref{5.2-lem2}.

Case 3. Either $k_{1}=2$, $k_{2}\neq2$ or $k_{1}\neq2$, $k_{2}=2$.
Without losing generality, we may assume that $V=V(2)\otimes V(k)$ with $k\neq2$.
Consider the cyclic metric Lie algebra  $(L=\mathfrak{g}+_{\pi}V,\mathbf{B}_{L})$ constructed as
in Theorem \ref{5.1-thm-gV},
where  $\mathbf{B}_{L}((x,u),(y,v))=\rho(x)(v)+\rho(y)(u)$, $\forall x,y\in \mathfrak{g}$, $u,v\in V$.
By Lemma \ref{5.2-lem1}, we have
$\mathbf{B}_{L}((0,\mathfrak{sl}(2,\mathbb{C})),V)=0$.
Then for all $x,y,z\in \mathfrak{sl}(2,\mathbb{C})$, $u\in V(2)$ and $v\in V(k)$, we obtain
\begin{eqnarray*}
  0 &=& \mathbf{B}_{L}([(x,y),(x,z)]_{L},u\otimes v) \\
   &=&  -\mathbf{B}_{L}([(x,z),u\otimes v]_{L},(x,y))-\mathbf{B}_{L}([u\otimes v,(x,y)]_{L},(x,z))\\
   &=& -\mathbf{B}_{L}((x\cdot u)\otimes v+u\otimes (z\cdot v),(x,y))
   +\mathbf{B}_{L}((x\cdot u)\otimes v+u\otimes (y\cdot v),(x,z)) \\
   &=& -\mathbf{B}_{L}((x\cdot u)\otimes v+u\otimes (z\cdot v),(x,0))
   +\mathbf{B}_{L}((x\cdot u)\otimes v+u\otimes (y\cdot v),(x,0)) \\
   &=&  \mathbf{B}_{L}( u\otimes (y\cdot v-z\cdot v),(x,0)).
\end{eqnarray*}
This asserts that $\mathbf{B}_{L}((\mathfrak{sl}(2,\mathbb{C}),0),V)=0$ and consequently
$\mathbf{B}_{L}(\mathfrak{g},V)=0$. Hence  $\rho=0$.
\end{proof}
Combining Lemmas \ref{5.2-lem1}, \ref{5.2-lem2} and \ref{5.2-lem3}, we easily obtain
\begin{thm}\label{5.2-thm1}
Let $(\mathfrak{g},\pi,V,\rho)$ be a complex irreducible cyclic quadruple with
  $\mathfrak{g}=\mathfrak{sl}(2,\mathbb{C})\oplus\cdots\oplus\mathfrak{sl}(2,\mathbb{C})$
  ($m\geq1$ copies).
If $\dim V\neq 3$, then $\rho=0$.
\end{thm}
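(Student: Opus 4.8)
The plan is to reduce the general $m$-fold situation to the two base cases already in hand, namely the one-factor Lemma \ref{5.2-lem1} and the two-factor Lemma \ref{5.2-lem3}. Write $\mathfrak{g}=\mathfrak{s}_{1}\oplus\cdots\oplus\mathfrak{s}_{m}$ with each $\mathfrak{s}_{i}\cong\mathfrak{sl}(2,\mathbb{C})$. Since $\pi$ is irreducible, I may take $V=V(k_{1})\otimes\cdots\otimes V(k_{m})$, so that $\dim V=\prod_{i=1}^{m}(k_{i}+1)$. As $\mathfrak{g}=\sum_{i}\mathfrak{s}_{i}$, it is enough to show $\rho|_{\mathfrak{s}_{i}}=0$ for each $i$, which then gives $\rho=0$.

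First I would record two routine reduction devices. On the one hand, restricting a cyclic quadruple $(\mathfrak{g},\pi,V,\rho)$ to any subalgebra $\mathfrak{h}\subset\mathfrak{g}$ yields a cyclic quadruple $(\mathfrak{h},\pi|_{\mathfrak{h}},V,\rho|_{\mathfrak{h}})$, since the defining identity of Definition \ref{5.1-def-cyclic-quad} simply holds for fewer pairs $x,y$. On the other hand, if the $\mathfrak{h}$-module $V$ splits as $V=\bigoplus_{\alpha}W_{\alpha}$ into invariant subspaces, then $\pi^{\ast}$ preserves each $W_{\alpha}^{\ast}$, so projecting the defining identity onto the summands shows that each $(\mathfrak{h},\pi|_{W_{\alpha}},W_{\alpha},\rho_{\alpha})$ is a cyclic quadruple and that $\rho|_{\mathfrak{h}}=0$ precisely when every $\rho_{\alpha}=0$. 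Together these let me feed the (generally reducible) restrictions of $(\mathfrak{g},\pi,V,\rho)$ into Lemmas \ref{5.2-lem1} and \ref{5.2-lem3}, which require irreducibility, by working one irreducible constituent at a time.

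The core of the argument is then a dichotomy on the index $i$. If $k_{i}\neq 2$, I restrict the quadruple to the single factor $\mathfrak{s}_{i}$; as an $\mathfrak{s}_{i}$-module $V$ is a direct sum of copies of $V(k_{i})$, each of dimension $k_{i}+1\neq 3$, so Lemma \ref{5.2-lem1} applied to every constituent gives $\rho|_{\mathfrak{s}_{i}}=0$. If instead $k_{i}=2$, then $\dim V=3\prod_{j\neq i}(k_{j}+1)\neq 3$ forces $\prod_{j\neq i}(k_{j}+1)>1$, so some $j\neq i$ has $k_{j}\neq 0$; restricting the quadruple to the pair $\mathfrak{s}_{i}\oplus\mathfrak{s}_{j}$, the module $V$ becomes a direct sum of copies of the irreducible $V(2)\otimes V(k_{j})$, whose dimension $3(k_{j}+1)\geq 6$ is never $3$, so Lemma \ref{5.2-lem3} applied to every constituent gives $\rho|_{\mathfrak{s}_{i}\oplus\mathfrak{s}_{j}}=0$ and in particular $\rho|_{\mathfrak{s}_{i}}=0$. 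Running this over all $i$ produces $\rho=0$.

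I do not expect a serious obstacle, since the substantive computations are already carried out in Lemmas \ref{5.2-lem1}, \ref{5.2-lem2} and \ref{5.2-lem3}; the only delicate point is bookkeeping. The step that genuinely requires care is the case $k_{i}=2$: here I must produce a second nontrivial factor $\mathfrak{s}_{j}$ to pair with $\mathfrak{s}_{i}$, and this is exactly where the hypothesis $\dim V\neq 3$ enters. Indeed, if $k_{i}=2$ and all other $k_{j}=0$ then $V\cong V(2)$ has dimension $3$, which is precisely the excluded case; thus the hypothesis $\dim V\neq 3$ is used solely, but essentially, to guarantee that every factor on which $\pi$ acts through the adjoint-type module $V(2)$ can be absorbed into a two-factor sub-quadruple of dimension greater than $3$.
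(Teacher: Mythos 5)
Your proof is correct, and it draws on the same toolkit as the paper --- complete reducibility together with Lemmas \ref{5.2-lem1}, \ref{5.2-lem2} and \ref{5.2-lem3} --- but the reduction is organized differently, and in a way that is actually more robust than the paper's. The paper argues by induction on $m$, asserting that whenever $\dim V\neq 3$ one can split $\mathfrak{g}=\mathfrak{g}_{1}\oplus\mathfrak{g}_{2}$ with $V=V_{1}\otimes V_{2}$ and $\dim V_{i}\neq 3$ for \emph{both} $i$; such a splitting need not exist (for $V=V(2)\otimes V(2)\otimes V(2)$ every two-block partition of the three factors leaves one block equal to a single copy of $\mathfrak{sl}(2,\mathbb{C})$ acting on $V(2)$), so the paper's induction step is incomplete as written. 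Your factor-by-factor dichotomy avoids this: each $\mathfrak{s}_{i}$ with $k_{i}\neq 2$ is handled by Lemma \ref{5.2-lem1} applied to the irreducible constituents of $V|_{\mathfrak{s}_{i}}$, and each $\mathfrak{s}_{i}$ with $k_{i}=2$ is absorbed into a pair $\mathfrak{s}_{i}\oplus\mathfrak{s}_{j}$ with $k_{j}\neq 0$ (such a $j$ exists precisely because $\dim V\neq 3$), whose constituents $V(2)\otimes V(k_{j})$ have dimension $3(k_{j}+1)\geq 6$, so Lemma \ref{5.2-lem3} applies. Your two reduction devices --- restriction of a cyclic quadruple to a subalgebra, and projection of $\rho$ onto the duals of invariant summands --- are both valid for exactly the reasons you give, since $\pi^{\ast}(x)$ preserves the annihilator of each complementary summand. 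The only cost of your route is verifying $\rho|_{\mathfrak{s}_{i}}=0$ separately for each $i$ rather than in one recursive split, a negligible price for an argument that actually closes the gap in the published induction.
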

\begin{proof}
This follows by induction on $m$, using the complete reducibility of representations of semisimple Lie algebras and Lemmas \ref{5.2-lem1}, \ref{5.2-lem2} and \ref{5.2-lem3}. The base cases $m=1,2$ are covered.
For $m>2$, any irreducible representation is a tensor product of irreducible representations of the factors, and its dimension is the product of the dimensions. If $\dim V\neq 3$, we can decompose $\mathfrak{g}=\mathfrak{g}_{1}\oplus \mathfrak{g}_{2}$ with $V=V_{1}\otimes V_{2}$, where $\mathfrak{g}_{i}$ acts irreducibly on $V_{i}$ and $\dim V_{i}\neq 3$ for $i=1,2$.
\end{proof}
\begin{thm}\label{4-thm-b}
Let  $(\mathfrak{g},\mathbf{B})$ be a complex  cyclic metric Lie algebra with Levi-decomposition $\mathfrak{g}=\mathfrak{s}+\mathfrak{r}$. Suppose that $\tilde{\mathfrak{s}}=\mathfrak{sl}(2,\mathbb{C})\oplus\cdots\oplus\mathfrak{sl}(2,\mathbb{C})\subset \mathfrak{s}$ is a semisimple factor,
 and the dimension of every $\tilde{\mathfrak{s}}$-irreducible subspace of  $\mathfrak{r}$ is not equal to $3$,  then $\mathbf{B}(\tilde{\mathfrak{s}},\mathfrak{r})=0$.
\end{thm}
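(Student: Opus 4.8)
The plan is to reduce the statement to the irreducible case already settled in Theorem \ref{5.2-thm1}, using the cyclic-quadruple machinery of Section \ref{sec4}. Since $\mathfrak{r}$ is the radical, it is an ideal of $\mathfrak{g}$, so $[\tilde{\mathfrak{s}},\mathfrak{r}]\subset\mathfrak{r}$ and the adjoint action $\pi(h)(i)=[h,i]$ defines a representation $\pi:\tilde{\mathfrak{s}}\to\mathfrak{gl}(\mathfrak{r})$. Setting $\rho(h)(i)=\mathbf{B}(h,i)$, Proposition \ref{5-prop-decom-h-i} guarantees that the quadruple $(\tilde{\mathfrak{s}},\pi,\mathfrak{r},\rho)$ is cyclic, and by construction $\rho=0$ is exactly the assertion $\mathbf{B}(\tilde{\mathfrak{s}},\mathfrak{r})=0$ that we wish to prove. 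So the whole problem is to show $\rho=0$.

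First I would decompose $\mathfrak{r}$ as a $\tilde{\mathfrak{s}}$-module. Because $\tilde{\mathfrak{s}}$ is semisimple, every finite-dimensional representation is completely reducible, so $\mathfrak{r}=\bigoplus_{j}W_{j}$ with each $W_{j}$ an irreducible $\tilde{\mathfrak{s}}$-submodule. Dualizing gives $\mathfrak{r}^{*}=\bigoplus_{j}W_{j}^{*}$, where I identify $W_{j}^{*}$ with the annihilator of $\bigoplus_{k\neq j}W_{k}$. The key observation is that the dual representation $\pi^{*}$ preserves this decomposition: if $f$ vanishes off $W_{j}$ and $v\in W_{k}$ with $k\neq j$, then $\pi(x)(v)\in W_{k}$, so $(\pi^{*}(x)f)(v)=-f(\pi(x)(v))=0$, whence $\pi^{*}(x)f\in W_{j}^{*}$.

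Next I would split $\rho$ accordingly. Writing $\rho=\sum_{j}\rho_{j}$ with $\rho_{j}:\tilde{\mathfrak{s}}\to W_{j}^{*}$ the component of $\rho$ landing in $W_{j}^{*}$, the cocycle identity $\rho([x,y])+\pi^{*}(x)\rho(y)-\pi^{*}(y)\rho(x)=0$ decomposes into one identity per summand, since by the invariance above every term lies in a single $W_{j}^{*}$. Hence each $(\tilde{\mathfrak{s}},\pi|_{W_{j}},W_{j},\rho_{j})$ is again a cyclic quadruple, and it is irreducible. The hypothesis forces $\dim W_{j}\neq 3$ for every $j$, so Theorem \ref{5.2-thm1} yields $\rho_{j}=0$; summing over $j$ gives $\rho=0$, that is, $\mathbf{B}(\tilde{\mathfrak{s}},\mathfrak{r})=0$.

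The main obstacle, and really the only non-formal point, is verifying that the cyclic-quadruple structure is compatible with the irreducible decomposition, i.e. that restricting $\rho$ to each component produces a genuine cyclic quadruple; this rests entirely on the $\pi^{*}$-invariance of the dual decomposition established above, after which the argument is bookkeeping plus the appeal to Theorem \ref{5.2-thm1}. I would also note that the possible trivial summands cause no trouble: a one-dimensional $W_{j}$ has $\dim W_{j}=1\neq 3$ and is covered by Theorem \ref{5.2-thm1}, and in any event the semisimplicity of $\tilde{\mathfrak{s}}$ (so $[\tilde{\mathfrak{s}},\tilde{\mathfrak{s}}]=\tilde{\mathfrak{s}}$) already forces $\rho_{j}$ to vanish there.
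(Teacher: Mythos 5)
Your proposal is correct and follows essentially the same route as the paper: decompose $\mathfrak{r}$ into $\tilde{\mathfrak{s}}$-irreducible summands and invoke Theorem \ref{5.2-thm1} on each via the cyclic-quadruple formalism. The only cosmetic difference is that the paper applies Proposition \ref{5-prop-decom-h-i} directly to each pair $(\tilde{\mathfrak{s}},W)$ (which is permitted since $W$ need only be an $\tilde{\mathfrak{s}}$-invariant subspace), whereas you form the quadruple on all of $\mathfrak{r}$ first and then check that it splits; both yield the same reduction.
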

\begin{proof}
For any $\tilde{\mathfrak{s}}$-irreducible subspace $W\subset \mathfrak{r}$, we have $\dim W\neq 3$ by hypothesis. By Proposition \ref{5-prop-decom-h-i}, the associated quadruple $(\tilde{\mathfrak{s}},\pi,W,\rho)$
is cyclic.
Theorem \ref{5.2-thm1} implies that this quadruple is trivial, so $\mathbf{B}(\tilde{\mathfrak{s}},W)=0$.
Since $\mathfrak{r}$ is a direct sum of such irreducible subspaces, we conclude $\mathbf{B}(\tilde{\mathfrak{s}},\mathfrak{r})=0$.
\end{proof}
\begin{thm}\label{4-thm-c}
Let  $(\mathfrak{g},\mathbf{B})$ be a   cyclic metric Lie algebra over the complex or real field,
 then $\mathbf{B}(\mathfrak{s},[\mathfrak{a},\mathrm{nil}\,\mathfrak{g}])
=\mathbf{B}(\mathfrak{a},[\mathfrak{s},\mathrm{nil}\,\mathfrak{g}])=0$.
\end{thm}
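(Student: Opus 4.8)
The plan is to reduce the whole statement to a single algebraic observation about cyclic quadruples, the hypothesis $[\mathfrak{s},\mathfrak{a}]=0$ entering only once. First I would record the pointwise identity obtained by applying the cyclic cocycle condition to a triple $(a,s,n)$ with $a\in\mathfrak{a}$, $s\in\mathfrak{s}$, $n\in\mathrm{nil}\,\mathfrak{g}$: since $[a,s]=0$, the three-term identity collapses, after using the symmetry of $\mathbf{B}$, to
\begin{equation*}
\mathbf{B}(a,[s,n])=\mathbf{B}(s,[a,n]),\qquad \forall\, a\in\mathfrak{a},\ s\in\mathfrak{s},\ n\in\mathrm{nil}\,\mathfrak{g}.
\end{equation*}
This already shows that the two quantities in the statement coincide, so it suffices to prove $\mathbf{B}(\mathfrak{s},[\mathfrak{a},\mathrm{nil}\,\mathfrak{g}])=0$ and then read the other equality off the displayed identity.

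Next I would fix $a\in\mathfrak{a}$ and exploit that $\mathrm{ad}\,a$ acts diagonally on $\mathrm{nil}\,\mathfrak{g}$ over $\mathbb{C}$. Writing $\mathrm{nil}\,\mathfrak{g}=\bigoplus_{c}U_{c}$ for its eigenspace decomposition, with $U_{c}=\{n:[a,n]=cn\}$, one has $[a,\mathrm{nil}\,\mathfrak{g}]=\bigoplus_{c\neq 0}U_{c}$, so it is enough to show $\mathbf{B}(\mathfrak{s},U_{c})=0$ for each $c\neq0$. Because $[s,a]=0$ forces $\mathrm{ad}\,s$ to commute with $\mathrm{ad}\,a$, each eigenspace $U_{c}$ is $\mathfrak{s}$-invariant; hence $(\mathfrak{s},\pi,U_{c},\rho_{c})$, with $\pi=\mathrm{ad}|_{U_{c}}$ and $\rho_{c}(s)=\mathbf{B}(s,\cdot)|_{U_{c}}$, is a cyclic quadruple by Proposition \ref{5-prop-decom-h-i}.

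The crux is a coboundary phenomenon. On $U_{c}$ the identity above reads $\mathbf{B}(s,n)=\tfrac1c\mathbf{B}(a,[s,n])=\tfrac1c\mathbf{B}(a,\pi(s)n)$, so setting $g=\tfrac1c\,\mathbf{B}(a,\cdot)|_{U_{c}}\in U_{c}^{*}$ gives $\rho_{c}(s)=-\pi^{\ast}(s)g$ for every $s$. Substituting this "coboundary" into the defining identity $\rho_{c}([s_{1},s_{2}])+\pi^{\ast}(s_{1})\rho_{c}(s_{2})-\pi^{\ast}(s_{2})\rho_{c}(s_{1})=0$ and using that $\pi^{\ast}$ is a representation, the ordinary cocycle part cancels with the opposite sign and the identity collapses to $\pi^{\ast}([s_{1},s_{2}])g=0$ for all $s_{1},s_{2}$. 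Since $\mathfrak{s}=[\mathfrak{s},\mathfrak{s}]$, this forces $\pi^{\ast}(\mathfrak{s})g=0$, whence $\rho_{c}=-\pi^{\ast}(\cdot)g=0$, i.e. $\mathbf{B}(\mathfrak{s},U_{c})=0$. Summing over $c\neq0$ and over $a\in\mathfrak{a}$ yields $\mathbf{B}(\mathfrak{s},[\mathfrak{a},\mathrm{nil}\,\mathfrak{g}])=0$; feeding this back into the first identity gives $\mathbf{B}(\mathfrak{a},[\mathfrak{s},\mathrm{nil}\,\mathfrak{g}])=0$. For the real case I would complexify: $\mathfrak{g}_{\mathbb{C}}$ carries the $\mathbb{C}$-bilinear extension of $\mathbf{B}$, and its Levi factor, $\mathfrak{a}$-part and nilradical are the complexifications of those of $\mathfrak{g}$, so the complex result applies and restricts to $\mathbb{R}$.

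I expect the main obstacle to be conceptual rather than computational: recognizing that the sign in the cyclic (commutative) cocycle condition is exactly opposite to the Chevalley--Eilenberg one, so that a covector of coboundary type $\rho(s)=-\pi^{\ast}(s)g$ is forced to vanish instead of being automatically admissible. The secondary point requiring care is the passage to genuine (rather than generalized) eigenspaces of $\mathrm{ad}\,a$, since it is precisely the scalar action $[a,n]=cn$ that produces the coboundary; this is guaranteed over $\mathbb{C}$ by diagonalizability and handled over $\mathbb{R}$ by complexification.
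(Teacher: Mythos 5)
Your proof is correct, and it takes a genuinely different route from the paper's. The paper decomposes $\mathrm{nil}\,\mathfrak{g}$ into $\tilde{\mathfrak{s}}$-irreducible subspaces $V$ for each simple factor $\tilde{\mathfrak{s}}$, invokes the classification results (Theorem \ref{5-thm-nondegerate-sl(2)} for factors not isomorphic to $\mathfrak{sl}(2,\mathbb{C})$ and Theorem \ref{4-thm-b} for $\dim V\neq 3$) to get $\mathbf{B}(\tilde{\mathfrak{s}},V)=\mathbf{B}(\tilde{\mathfrak{s}},[x,V])=0$ and hence $\mathbf{B}(x,[\tilde{\mathfrak{s}},V])=0$ from the cyclic identity, and then disposes of the remaining case $\dim V=3$ with the adjoint action by an explicit computation with the symmetric matrix $Q$ from Example \ref{3-exam-so(3)}. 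You instead decompose $\mathrm{nil}\,\mathfrak{g}$ into eigenspaces of $\mathrm{ad}\,a$ and observe that on each $U_{c}$ with $c\neq0$ the identity $\mathbf{B}(a,[s,n])=\mathbf{B}(s,[a,n])=c\,\mathbf{B}(s,n)$ exhibits $\rho_{c}$ as $-\pi^{\ast}(\cdot)g$, which the sign convention of the cyclic quadruple condition turns into $-2\pi^{\ast}([s_{1},s_{2}])g=0$ rather than an identity; perfectness of $\mathfrak{s}$ then kills $\rho_{c}$. I checked the sign bookkeeping against the paper's definition $\pi^{*}(x)(f)(v)=-f(\pi(x)(v))$ and it is right, and since $\mathrm{char}\,\mathbb{F}\neq2$ the factor $2$ is harmless. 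Your argument buys a lot: it needs no representation theory of $\mathfrak{sl}(2,\mathbb{C})$, no case distinction, and in fact only uses that $\mathfrak{s}$ is perfect and commutes with $\mathfrak{a}$, so it would prove the stronger local statement $\mathbf{B}(\mathfrak{s},U_{c})=0$ for every nonzero eigenspace. The one trade-off is that it genuinely depends on the standing assumption that $\mathfrak{a}$ acts diagonally (not merely by generalized eigenspaces) on $\mathrm{nil}\,\mathfrak{g}$ over $\mathbb{C}$ — you flag this correctly, and the paper does assume it globally and use it elsewhere (e.g.\ in Proposition \ref{6-prop-a}) — whereas the paper's proof of this particular theorem does not need that hypothesis. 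The reduction of the real case by $\mathbb{C}$-bilinear extension matches the paper's.
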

\begin{proof}
It suffices to prove the result over $\mathbb{C}$.
For $x\in \mathfrak{a}$ and $\tilde{\mathfrak{s}}\subset \mathfrak{s}$ a simple factor, let $V\subset \mathrm{nil}\,\mathfrak{g}$ be an irreducible space of $\tilde{\mathfrak{s}}$, then $[x,V]$ is also an
irreducible space of $\tilde{\mathfrak{s}}$ isomorphic to $V$, but not necessarily equal to $V$.

If $\dim V\neq 3$ or $\tilde{\mathfrak{s}}\neq\mathfrak{sl}(2,\mathbb{C})$, then by Theorem \ref{5-thm-nondegerate-sl(2)} and Theorem \ref{4-thm-b}, we have
$\mathbf{B}(\tilde{\mathfrak{s}}, V)=\mathbf{B}(\tilde{\mathfrak{s}}, [x,V])=0$. Therefore,
\begin{eqnarray*}
0 &=& \mathbf{B}(x,[\tilde{\mathfrak{s}},V])+\mathbf{B}(\tilde{\mathfrak{s}},[V,x])
+\mathbf{B}( V,[x,\tilde{\mathfrak{s}}])\\
  &=& \mathbf{B}(x,[\tilde{\mathfrak{s}},V]).
\end{eqnarray*}

If $\dim V=3$ and $\tilde{\mathfrak{s}}=\mathfrak{sl}(2,\mathbb{C})$, then the action  is equivalent to the adjoint representation. We can identify $\tilde{\mathfrak{s}}+V$ with $\mathfrak{so}(3,\mathbb{C})+_{\pi}\mathbb{C}^{3}$ as in Example \ref{3-exam-so(3)}, and   the symmetry of $Q$ is also preserved for $\tilde{\mathfrak{s}}+[x,V]$.
For instance, from $\mathbf{B}(\mathbf{i},e_{2})=\mathbf{B}(\mathbf{j},e_{1})$,
 it follows that $\mathbf{B}(\mathbf{i},[x,e_{2}])=\mathbf{B}(\mathbf{j},[x,e_{1}])$,
which implies
$\mathbf{B}(x,[\mathbf{i},e_{2}])=\mathbf{B}(x,[\mathbf{j},e_{1}])$.
Consequently,  $\mathbf{B}(x,e_{3})=0$. A similar argument shows that $\mathbf{B}(x,V)=0$.

In summary, we conclude that $\mathbf{B}(x,[\tilde{\mathfrak{s}},V])=0$.
Hence  $\mathbf{B}(x,[\tilde{\mathfrak{s}},\mathrm{nil}\,\mathfrak{g}])=0$, and therefore
$\mathbf{B}(\mathfrak{a},[\mathfrak{s},\mathrm{nil}\,\mathfrak{g}])=0$. This completes the proof.
\end{proof}

\textbf{Proof of Theorem \ref{1-thm-main2}}.
This is a direct consequence of Lemma \ref{2-prop-semisimple}, Theorems \ref{5-thm-nondegerate-sl(2)},
\ref{4-thm-b} and \ref{4-thm-c}.
\qed

\section{Proof of Theorem \ref{1-thm-main3}}\label{sec5}
In this section, we present the proof of Theorem \ref{1-thm-main3}. To accomplish this, we need a technical proposition.
\begin{prop}\label{6-prop-a}
Let $\mathfrak{g}$ be a Lie algebra over $\mathbb{R}$ or $\mathbb{C}$,
$\mathfrak{g}=\mathfrak{s}+\mathfrak{r}=\mathfrak{s}+\mathfrak{a}+\mathrm{nil}\,\mathfrak{g}$ be a Levi-decomposition of $\mathfrak{g}$.
Let $0=C^{0}(\mathrm{nil}\,\mathfrak{g}) \subset C^{1}(\mathrm{nil}\,\mathfrak{g})=C(\mathrm{nil}\,\mathfrak{g})\subset C^{2}(\mathrm{nil}\,\mathfrak{g})\subset\cdots\subset\mathrm{nil}\,\mathfrak{g}$ be the upper central series of $\mathrm{nil}\,\mathfrak{g}$, and $[\mathfrak{s},C^{k-1}(\mathrm{nil}\,\mathfrak{g})]=0$, but $[\mathfrak{s},C^{k}(\mathrm{nil}\,\mathfrak{g})]\neq0$ for a number $k\in \mathbb{N}$. Suppose  that $\mathbf{B}$ is a cyclic metric on $\mathfrak{g}$,
then $\mathbf{B}([\mathfrak{s},C^{k}(\mathrm{nil}\,\mathfrak{g})],\mathfrak{r})=0$ and
$\mathbf{B}([\mathfrak{a},[\mathfrak{s},C^{k}(\mathrm{nil}\,\mathfrak{g})]],\mathfrak{g})=0$.
\end{prop}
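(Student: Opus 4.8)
The plan is to prove both identities by isolating the subspace $W:=[\mathfrak{s},C^{k}(\mathrm{nil}\,\mathfrak{g})]$, establishing first that $\mathbf{B}(W,\mathfrak{r})=0$, and then deducing the second identity as a short corollary. Write $C^{i}=C^{i}(\mathrm{nil}\,\mathfrak{g})$. Since each $C^{i}$ is a characteristic ideal of $\mathrm{nil}\,\mathfrak{g}$, it is invariant under $\mathfrak{s}$ and $\mathfrak{a}$, so $[\mathfrak{s},C^{i}]\subseteq C^{i}$ and $[\mathfrak{a},C^{i}]\subseteq C^{i}$, while the definition of the upper central series gives $[C^{i},\mathrm{nil}\,\mathfrak{g}]\subseteq C^{i-1}$. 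The first preliminary step is to record that $\mathbf{B}(\mathfrak{s},C^{k-1})=0$, which is immediate from Lemma \ref{2-lem-decomposition} applied to the semisimple subalgebra $\mathfrak{s}$ and the space $C^{k-1}$, on which $\mathfrak{s}$ acts trivially by hypothesis. Using this together with the cyclic identity on a triple $(s,z,a)$ with $s\in\mathfrak{s}$, $z\in C^{k}$, $a\in\mathrm{nil}\,\mathfrak{g}$, and the fact that $[z,a]\in C^{k-1}$, I obtain the key cross-symmetry relation
\begin{equation*}
\mathbf{B}([s,z],a)=\mathbf{B}(z,[s,a]),\qquad s\in\mathfrak{s},\ z\in C^{k},\ a\in\mathrm{nil}\,\mathfrak{g}. \tag{$\star$}
\end{equation*}

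The heart of the argument is to deduce $\mathbf{B}(W,\mathrm{nil}\,\mathfrak{g})=0$ from $(\star)$. I would encode $(\star)$ as the statement that the linear map $B\colon C^{k}\to(\mathrm{nil}\,\mathfrak{g})^{*}$, $B(z)=\mathbf{B}(z,\cdot)$, is \emph{anti}-equivariant for the adjoint action $\pi$ of $\mathfrak{s}$ on $\mathrm{nil}\,\mathfrak{g}$, that is $B([s,z])=-\pi^{*}(s)B(z)$. Because $C^{k}$ is $\mathfrak{s}$-invariant we have $[t,z]\in C^{k}$, so $(\star)$ may be applied a second time; running exactly the commute/anticommute computation used in the proof of Lemma \ref{2-prop-semisimple}, the Jacobi identity $[s,[t,z]]-[t,[s,z]]=[[s,t],z]$ yields
\begin{equation*}
\pi^{*}([s,t])B(z)=B([[s,t],z])=-\pi^{*}([s,t])B(z),\qquad s,t\in\mathfrak{s},\ z\in C^{k},
\end{equation*}
whence $\pi^{*}([s,t])B(z)=0$. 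Since $\mathfrak{s}=[\mathfrak{s},\mathfrak{s}]$ this forces $\pi^{*}(\mathfrak{s})B(C^{k})=0$, and therefore $B(W)=B([\mathfrak{s},C^{k}])=-\pi^{*}(\mathfrak{s})B(C^{k})=0$; that is, $\mathbf{B}(W,\mathrm{nil}\,\mathfrak{g})=0$.

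It then remains to treat the $\mathfrak{a}$-component and to assemble the conclusions. Since $W=[\mathfrak{s},C^{k}]\subseteq[\mathfrak{s},\mathrm{nil}\,\mathfrak{g}]$, Theorem \ref{4-thm-c} gives $\mathbf{B}(\mathfrak{a},W)=0$; combined with the previous paragraph this yields $\mathbf{B}(W,\mathfrak{r})=\mathbf{B}(W,\mathfrak{a}+\mathrm{nil}\,\mathfrak{g})=0$, the first identity. For the second identity, I would first note $[\mathfrak{a},W]\subseteq W$: by the Jacobi identity and $[\mathfrak{s},\mathfrak{a}]=0$ one has $[\mathfrak{a},[\mathfrak{s},C^{k}]]\subseteq[\mathfrak{s},[\mathfrak{a},C^{k}]]\subseteq[\mathfrak{s},C^{k}]=W$. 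Hence $\mathbf{B}([\mathfrak{a},W],\mathfrak{r})\subseteq\mathbf{B}(W,\mathfrak{r})=0$, and it only remains to check $\mathbf{B}([\mathfrak{a},W],\mathfrak{s})=0$; for $a\in\mathfrak{a}$, $w\in W\subseteq C^{k}$, $\sigma\in\mathfrak{s}$ the cyclic identity with $[\sigma,a]=0$ gives $\mathbf{B}([a,w],\sigma)=\mathbf{B}([\sigma,w],a)$, and $[\sigma,w]\in[\mathfrak{s},C^{k}]=W$, so this equals $\mathbf{B}(W,\mathfrak{a})=0$ by the first identity.

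The step I expect to be the main obstacle is the middle one, $\mathbf{B}(W,\mathrm{nil}\,\mathfrak{g})=0$. A naive attempt to bootstrap along the central series using only $(\star)$ is circular, since $(\star)$ merely rewrites $\mathbf{B}(W,\mathrm{nil}\,\mathfrak{g})$ as $\mathbf{B}(C^{k},[\mathfrak{s},\mathrm{nil}\,\mathfrak{g}])$, and the nontrivial $\mathfrak{s}$-part of $C^{k}$ is $W$ itself. What breaks the circularity is that $(\star)$ is a genuine cross-space symmetry of the adjoint action, so the Jacobi identity together with $\mathfrak{s}=[\mathfrak{s},\mathfrak{s}]$ — the same mechanism as in Lemma \ref{2-prop-semisimple}, but now applied to the (non-symmetric) pairing $C^{k}\times\mathrm{nil}\,\mathfrak{g}\to\mathbb{F}$ rather than to a form on a single space — forces $\pi^{*}(\mathfrak{s})B(C^{k})=0$. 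Here the hypothesis that $C^{k}$ sits in the upper central series is essential: it is exactly what guarantees $[\mathfrak{s},C^{k}]\subseteq C^{k}$, which is what allows $(\star)$ to be iterated, and $[C^{k},\mathrm{nil}\,\mathfrak{g}]\subseteq C^{k-1}$, which is what makes $(\star)$ hold in the first place.
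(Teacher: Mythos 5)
Your proof is correct, and it takes a genuinely different route from the paper's on both substantive steps. For the core vanishing $\mathbf{B}([\mathfrak{s},C^{k}(\mathrm{nil}\,\mathfrak{g})],\mathrm{nil}\,\mathfrak{g})=0$, the paper splits $\mathrm{nil}\,\mathfrak{g}$ into $C^{k}(\mathrm{nil}\,\mathfrak{g})$ plus an $\mathfrak{s}$-invariant complement and treats the two blocks separately: the diagonal block via Lemma \ref{2-prop-semisimple}, and the cross block by constructing an auxiliary cyclic metric Lie algebra $\left((\mathfrak{s}+\mathrm{nil}\,\mathfrak{g},[\cdot,\cdot]'),\widetilde{\mathbf{B}}\right)$ in which $\mathrm{nil}\,\mathfrak{g}$ is abelianized and only the cross pairing survives, then invoking Corollary \ref{2-cor-abelian}. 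Your relation $(\star)$ --- which is precisely where the hypothesis $[\mathfrak{s},C^{k-1}(\mathrm{nil}\,\mathfrak{g})]=0$ enters, since $\mathbf{B}(\mathfrak{s},C^{k-1}(\mathrm{nil}\,\mathfrak{g}))=0$ kills the middle term of the cyclic identity --- lets you run the commutator computation of Lemma \ref{2-prop-semisimple} directly on the pairing $C^{k}(\mathrm{nil}\,\mathfrak{g})\times\mathrm{nil}\,\mathfrak{g}\to\mathbb{F}$ between two different spaces, and perfectness of $\mathfrak{s}$ then yields the full vanishing in one stroke, with no auxiliary algebra and no case split; I have checked the sign bookkeeping in the anti-equivariance $B([s,z])=-\pi^{*}(s)B(z)$ and the double application of $(\star)$, and it is sound. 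For the second identity the paper passes to $\mathbb{C}$ and uses the weight decomposition of $[\mathfrak{s},\mathrm{nil}\,\mathfrak{g}]$ under the diagonal action of $\mathfrak{a}$, whereas you need only the inclusion $[\mathfrak{a},[\mathfrak{s},C^{k}(\mathrm{nil}\,\mathfrak{g})]]\subseteq[\mathfrak{s},C^{k}(\mathrm{nil}\,\mathfrak{g})]$ (Jacobi plus $[\mathfrak{s},\mathfrak{a}]=0$) and one further application of the cyclic identity for the $\mathfrak{s}$-component, so your argument is uniform over $\mathbb{R}$ and $\mathbb{C}$ with no complexification. Both proofs lean on Theorem \ref{4-thm-c} for $\mathbf{B}(\mathfrak{a},[\mathfrak{s},\mathrm{nil}\,\mathfrak{g}])=0$, so there is no circularity. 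What the paper's route buys is a reusable abelianization device that reappears in Section \ref{sec5}; what yours buys is brevity and an explicit identification of the minimal inputs, namely $(\star)$ and $\mathfrak{s}=[\mathfrak{s},\mathfrak{s}]$.
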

\begin{proof}
It suffices to prove the result over $\mathbb{C}$. Under the action of $\mathfrak{s}$ on $\mathrm{nil}\,\mathfrak{g}$, we have the vector space decomposition   $\mathrm{nil}\,\mathfrak{g}=\mathfrak{n}_{0}+[\mathfrak{s},\mathrm{nil}\,\mathfrak{g}]$, where $\mathfrak{n}_{0}$ is a nilpotent subalgebra of $\mathrm{nil}\,\mathfrak{g}$ satisfying $[\mathfrak{s},\mathfrak{n}_{0}]=0$. By Schur's Lemma, we easily obtain $[\mathfrak{n}_{0},[\mathfrak{s},\mathrm{nil}\,\mathfrak{g}]]=0$, which implies that $\mathfrak{n}_{0}$ is an ideal of $\mathfrak{g}$.
 Then we have
 $$ \mathbf{B}([\mathfrak{s},\mathrm{nil}\,\mathfrak{g}],\mathfrak{n}_{0})
 =\mathbf{B}([\mathfrak{s},[\mathfrak{s},\mathrm{nil}\,\mathfrak{g}]],\mathfrak{n}_{0})
 =-\mathbf{B}([[\mathfrak{s},\mathrm{nil}\,\mathfrak{g}],\mathfrak{n}_{0}],\mathfrak{s})
 -\mathbf{B}([\mathfrak{n}_{0},\mathfrak{s}],[\mathfrak{s},\mathrm{nil}\,\mathfrak{g}])
 =0.$$
  Thus, by Lemma \ref{2-lem-decomposition} and Theorem \ref{4-thm-c}, we obtain that
\begin{eqnarray*}
\mathbf{B}(\mathfrak{s}+[\mathfrak{s},\mathrm{nil}\,\mathfrak{g}],\mathfrak{a}+\mathfrak{n}_{0})=0.
\end{eqnarray*}

Since $\mathfrak{s}$ acts completely reducibly on $\mathrm{nil}\,\mathfrak{g}$, there exists an $\mathfrak{s}$-invariant subspace decomposition $\mathrm{nil}\,\mathfrak{g}=C^{k}(\mathrm{nil}\,\mathfrak{g})+W$, where $[\mathfrak{s},W]\subset W$.  By definition,
 $[\mathfrak{s},[C^{k}(\mathrm{nil}\,\mathfrak{g}),\mathrm{nil}\,\mathfrak{g}]]=0$, which implies
\begin{equation*}
  0=\mathbf{B}\left(\mathfrak{s},[C^{k}(\mathrm{nil}\,\mathfrak{g}),\mathrm{nil}\,\mathfrak{g}]\right)
=-\mathbf{B}\left(C^{k}(\mathrm{nil}\,\mathfrak{g}),[\mathrm{nil}\,\mathfrak{g},\mathfrak{s}]\right)
-\mathbf{B}\left(\mathrm{nil}\,\mathfrak{g},[\mathfrak{s},C^{k}(\mathrm{nil}\,\mathfrak{g})]\right).
\end{equation*}
Thus by  Corollary \ref{2-cor-decom-abelian}, we  can define a new cyclic metric Lie algebra
$\left((\mathfrak{s}+\mathrm{nil}\,\mathfrak{g},[\cdot,\cdot]'),\widetilde{\mathbf{B}}\right)$ as follows:
\begin{equation*}
  \begin{cases}
[\mathfrak{s},\mathfrak{s}]'=[\mathfrak{s},\mathfrak{s}],
\quad [\mathfrak{s},\mathrm{nil}\,\mathfrak{g}]'=[\mathfrak{s},\mathrm{nil}\,\mathfrak{g}],\quad [\mathrm{nil}\,\mathfrak{g},\mathrm{nil}\,\mathfrak{g}]'=0,\\
\widetilde{\mathbf{B}}(\mathfrak{s},\mathfrak{s}+\mathrm{nil}\,\mathfrak{g})
=\widetilde{\mathbf{B}}(C^{k}(\mathrm{nil}\,\mathfrak{g}),C^{k}(\mathrm{nil}\,\mathfrak{g}))
=\widetilde{\mathbf{B}}(W,W)=0,\\
\widetilde{\mathbf{B}}(C^{k}(\mathrm{nil}\,\mathfrak{g}),W)=\mathbf{B}(C^{k}(\mathrm{nil}\,\mathfrak{g}),W).
\end{cases}
\end{equation*}
By Corollary \ref{2-cor-abelian},
we obtain that $\widetilde{\mathbf{B}}([\mathfrak{s},\mathrm{nil}\,\mathfrak{g}]',\mathrm{nil}\,\mathfrak{g})=0$.
Thus $\mathbf{B}([\mathfrak{s},C^{k}(\mathrm{nil}\,\mathfrak{g})],W)=0$.
Moreover, since for each $x\in \mathfrak{s}$, the operator $\mathrm{ad}\,x|_{C^{k}(\mathrm{nil}\,\mathfrak{g})}$ is symmetric,
Lemma \ref{2-prop-semisimple} implies that
 $\mathbf{B}([\mathfrak{s},C^{k}(\mathrm{nil}\,\mathfrak{g})],C^{k}(\mathrm{nil}\,\mathfrak{g}))=0$.
It follows  that
\begin{equation*}
 \mathbf{B}([\mathfrak{s},C^{k}(\mathrm{nil}\,\mathfrak{g})],\mathfrak{r})
=\mathbf{B}([\mathfrak{s},C^{k}(\mathrm{nil}\,\mathfrak{g})],\mathrm{nil}\,\mathfrak{g})=0.
\end{equation*}

We now suppose  that $[\mathfrak{a},[\mathfrak{s},C^{k}(\mathrm{nil}\,\mathfrak{g})]]\neq0$. Then under the action of $\mathfrak{a}$ on $[\mathfrak{s},\mathrm{nil}\,\mathfrak{g}]$, we have a vector space decomposition
$[\mathfrak{s},\mathrm{nil}\,\mathfrak{g}]=\sum_{\lambda\in \mathfrak{a}^{*}}V_{\lambda}$, where
$$V_{\lambda}=\{v\in[\mathfrak{s},\mathrm{nil}\,\mathfrak{g}]|[x,v]=\lambda(x)v,\forall x\in \mathfrak{a} \}.$$
Clearly,
$[\mathfrak{a},[\mathfrak{s},C^{k}(\mathrm{nil}\,\mathfrak{g})]]
=\sum_{\lambda\neq0}\left(V_{\lambda}\cap C^{k}(\mathrm{nil}\,\mathfrak{g})\right)$,
and $[\mathfrak{s},V_{\lambda}]=V_{\lambda}$.
Since
\begin{equation*}
  \mathbf{B}(\mathfrak{s},[\mathfrak{a},V_{\lambda}])
=-\mathbf{B}(\mathfrak{a},[V_{\lambda},\mathfrak{s}])-\mathbf{B}(V_{\lambda},[\mathfrak{s},\mathfrak{a}])=0,
\end{equation*}
we  conclude that $\mathbf{B}(\mathfrak{s},V_{\lambda})=0$ for all $\lambda\neq0$.
In summary, we conclude that
$\mathbf{B}(V_{\lambda}\cap C^{k}(\mathrm{nil}\,\mathfrak{g}),
\mathfrak{s}+\mathfrak{r})=0$ for all $\lambda\neq0$. Therefore $\mathbf{B}([\mathfrak{a},[\mathfrak{s},C^{k}(\mathrm{nil}\,\mathfrak{g})]],\mathfrak{g})=0$.
\end{proof}

\textbf{Proof of Theorem \ref{1-thm-main3}}.
Let $(\mathfrak{g},\mathbf{B})$ be  a non-degenerate cyclic metric Lie algebra and $\mathfrak{g}=\mathfrak{s}+\mathfrak{r}$ be a Levi-decomposition of $\mathfrak{g}$, where $\mathfrak{r}=\mathfrak{a}+\mathrm{nil}\,\mathfrak{g}$.
 Take the notation as in Proposition \ref{6-prop-a},  we have
$\mathbf{B}(\mathfrak{s}+[\mathfrak{s},\mathrm{nil}\,\mathfrak{g}],\mathfrak{a}+\mathfrak{n}_{0})=0.$

(a) It suffices to prove the result over $\mathbb{C}$. Suppose conversely that $[\mathfrak{s},\mathrm{nil}\,\mathfrak{g}]\neq 0$. Then there exists an integer $k\geq2$ such that
$[\mathfrak{s},C^{k-1}(\mathrm{nil}\,\mathfrak{g})]=0$ and $[\mathfrak{s},C^{k}(\mathrm{nil}\,\mathfrak{g})]\neq0$.
 One can then choose $\mathfrak{s}$-irreducible subspaces $V\subset [\mathfrak{s},C^{k}(\mathrm{nil}\,\mathfrak{g})]$ and $U\subset \mathrm{nil}\,\mathfrak{g}$ such that
$[V,U]\neq0$. It follows from Proposition \ref{6-prop-a} that $[\mathfrak{a},V]=0$, and hence
\begin{equation*}
  \mathbf{B}(\mathfrak{a},[V,U])=-\mathbf{B}(V,[U,\mathfrak{a}])-\mathbf{B}(U,[\mathfrak{a},V])=0.
\end{equation*}
Furthermore, we observe that
\begin{equation*}
 \mathbf{B}(\mathfrak{n}_{0},[V,U])=-\mathbf{B}(V,[U,\mathfrak{n}_{0}])-\mathbf{B}(U,[\mathfrak{n}_{0},V])=0.
\end{equation*}
This implies that $\mathbf{B}([V,U],\mathfrak{g})=0$, which contradicts the non-degeneracy of $\mathbf{B}$.

(b) Suppose that $[\mathfrak{s},C(\mathrm{nil}\,\mathfrak{g})]\neq0$. It follows from Proposition \ref{6-prop-a}  that $[\mathfrak{a},[\mathfrak{s},C(\mathrm{nil}\,\mathfrak{g})]]=0$,
hence $[\mathfrak{s},C(\mathrm{nil}\,\mathfrak{g})]\subset C(\mathfrak{r})$, the center of radical $\mathfrak{r}$.
Moreover, we have $\mathbf{B}([\mathfrak{s},C(\mathrm{nil}\,\mathfrak{g})],\mathfrak{r})=0$.

It follows from Theorem \ref{5-thm-nondegerate-sl(2)} that every simple factor of $\mathfrak{s}$ is isomorphic to $\mathfrak{sl}(2,\mathbb{C})$ over $\mathbb{C}$, and  to $\mathfrak{su}(2)$, $\mathfrak{sl}(2,\mathbb{R})$, or $\mathfrak{sl}(2,\mathbb{C})$ (regarded as a real Lie algebra) over $\mathbb{R}$.
By Theorem \ref{5.2-thm1} and its real analogue, there exist a direct sum decomposition $\mathfrak{s}=\mathfrak{s}_{1}\oplus\mathfrak{s}_{2}$ and an $\mathfrak{s}$-invariant subspace
$W\subset [\mathfrak{s},C(\mathrm{nil}\,\mathfrak{g})]$, such that $\mathfrak{s}_{1}$ is simple,  the action of  $\mathfrak{s}_{1}$  on  $W$ is equivalent to the adjoint representation of $\mathfrak{s}_{1}$, and $[\mathfrak{s}_{2},W]=0$.
This implies that the restriction of $\mathbf{B}$ to $\mathfrak{s}_{1}+W$ is non-degenerate,
 and $\mathbf{B}(\mathfrak{s}_{1},W)$ is induced by a non-degenerate cyclic metric on $\mathfrak{s}_{1}$ as described in Example \ref{4-example-cyclic}.
 Let $\mathfrak{h}\subset \mathfrak{g}$ be the orthogonal complement of $\mathfrak{s}_{1}+W$ in $\mathfrak{g}$. Then there exist a Lie bracket $[\cdot,\cdot]_{\mathfrak{h}}$ on $\mathfrak{h}$ and a cocycle $\theta\in (\wedge^{2}\mathfrak{h}^{\ast})\otimes W$ such that
\begin{eqnarray*}
[x,y]=[x,y]_{\mathfrak{h}}+\theta(x,y), \forall x,y\in\mathfrak{h}.
\end{eqnarray*}
Clearly, $\mathfrak{s}_{2}\subset \mathfrak{h}$, $[\mathfrak{s}_{1},\mathfrak{h}]\subset \mathfrak{h}$
and $\left((\mathfrak{h},[\cdot,\cdot]_{\mathfrak{h}}),\mathbf{B}|_{\mathfrak{h}}\right)$ is cyclic.
This shows that $(\mathfrak{g},\mathbf{B})$ is a double extension of the non-degenerate  cyclic metric Lie algebra $\left((\mathfrak{h},[\cdot,\cdot]_{\mathfrak{h}}),\mathbf{B}|_{\mathfrak{h}}\right)$ with respect to $\mathfrak{s}_{1}$.\qed


\end{document}